\theoremstyle{plain}
\newtheorem{theorem}{Theorem}[section]
\newtheorem{lemma}[theorem]{Lemma}
\newtheorem{proposition}[theorem]{Proposition}
\newtheorem{corollary}[theorem]{Corollary}
\newtheorem{definition}[theorem]{Definition}
\tikzset{>={Latex[width=3mm,length=2mm]}}
\tikzstyle{observed}=[draw, circle, minimum size=20pt, inner sep=0pt, fill=black!10]
\tikzstyle{qedge}=[->,thick,black]
\tikzstyle{neuron}=[draw,circle,minimum size=26pt,inner sep=0pt, fill=black!10]
\tikzstyle{hidden}=[draw,circle,minimum size=26pt,inner sep=0pt, fill=white]
\tikzset{>={Latex[width=3mm,length=2mm]}}
\tikzstyle{arr}=[->, thick, black]
\tikzset{
    double color fill/.code 2 args={
        \pgfdeclareverticalshading[%
            tikz@axis@top,tikz@axis@middle,tikz@axis@bottom%
        ]{diagonalfill}{100bp}{%
            color(0bp)=(tikz@axis@bottom);
            color(50bp)=(tikz@axis@bottom);
            color(50bp)=(tikz@axis@middle);
            color(50bp)=(tikz@axis@top);
            color(100bp)=(tikz@axis@top)
        }
        \tikzset{shade, left color=#1, right color=#2, shading=diagonalfill}
    }
}
\DeclarePairedDelimiter{\ceil}{\lceil}{\rceil}
\newcommand{\bo}{\boldsymbol }
\newcommand{\aaa}{\boldsymbol \alpha}
\newcommand{\btheta}{\boldsymbol \theta}
\newcommand{\nnu}{\boldsymbol\nu}
\newcommand{\pp}{\boldsymbol p}
\newcommand{\ee}{\boldsymbol e}
\newcommand{\yy}{\boldsymbol y}
\newcommand{\uu}{\boldsymbol u}
\newcommand{\cg}{\boldsymbol g}
\newcommand{\mca}{\mathcal{A}}
\newcommand{\mcb}{\mathcal{B}}
\newcommand{\one}{\mathbf 1}
\newcommand{\zero}{\mathbf 0}
\newcommand{\G}{\mathbf G}
\newcommand{\Q}{\mathbf Q}
\newcommand{\I}{\mathbf I}
\newcommand{\MP}{\mathbb P}
\newcommand{\vect}{\text{\normalfont{vec}}}
\newcommand{\rank}{\text{\normalfont{rank}}}
\newcommand{\ch}{\text{\normalfont{Child}}}
\newcommand{\paren}[1]{\left(#1\right)}
\newcommand{\bcdot}{\boldsymbol\cdot}
\def\tilde{\widetilde}
\newcommand{\bcolon}{\boldsymbol{:}}
\newcommand{\ov}{\overline }
\newcommand{\indep}{\perp\!\!\!\perp}
\newcommand{\notindep}{\not\! \perp\!\!\!\perp}
\begin{document}

\begin{frontmatter}
\title{Blessing of dependence: identifiability and geometry of
discrete models with multiple binary latent variables}
\runtitle{Blessing of dependence}

\begin{aug}
\author[A]{\inits{s}\fnms{Yuqi}~~\snm{Gu}~~{yuqi.gu@columbia.edu}}

\address[A]{Department of Statistics, Columbia University, New York, NY, USA}
\end{aug}

\begin{abstract}
Identifiability of discrete statistical models with latent variables is known to be challenging to study, yet crucial to a model's interpretability and reliability. This work presents a general algebraic technique to investigate identifiability of discrete models with latent and graphical components. Specifically, motivated by diagnostic tests collecting multivariate categorical data, we focus on discrete models with multiple binary latent variables. We consider the BLESS model, in which the latent variables can have arbitrary dependencies among themselves while the latent-to-observed measurement graph takes a ``star-forest'' shape. We establish necessary and sufficient graphical criteria for identifiability, and reveal an interesting and perhaps surprising geometry of blessing-of-dependence: under the minimal conditions for generic identifiability, the parameters are identifiable if and only if the latent variables are not statistically independent. Thanks to this theory, we can perform formal hypothesis tests of identifiability in the boundary case by testing marginal independence of the observed variables. {In addition to the BLESS model, we also use the technique to show identifiability and the blessing-of-dependence geometry for a more flexible model, which has a general measurement graph beyond a start forest.} Our results give new understanding of statistical properties of graphical models with latent variables. They also entail useful implications for designing diagnostic tests or surveys that measure binary latent traits.
\end{abstract}

\begin{keyword}
\kwd{Algebraic statistics}
\kwd{contingency table}
\kwd{diagnostic test}
\kwd{generic identifiability}
\kwd{graphical model}
\kwd{hypothesis testing}
\kwd{latent class model}
\kwd{multivariate categorical data}
\end{keyword}

\end{frontmatter}

\section{Introduction}

Discrete statistical models with latent variables and graphical components are widely used across many disciplines, such as
Noisy-Or Bayesian networks in medical diagnosis \citep{shwe1991qmrdt, halpern2013noisyor}, binary latent skill models in cognitive diagnosis \citep{chen2015qmat, xu2017rlcm, gu2023jmle}, and restricted Boltzmann machines and their variants in machine learning \citep{hinton2006fast, goodfellow2016deep}.
Incorporating latent variables into graphical models can greatly enhance the flexibility of a model. But such flexibility comes at a cost of  increasing model complexity and statistical subtlety, including identifiability as a fundamental and challenging issue.
In many applications, the latent variables carry substantive meaning such as specific diseases in medical settings and certain skills in educational settings, so uniquely identifying the model parameters and latent structure is of paramount practical importance to ensure valid interpretation \citep[e.g.][]{bing2020overlap, bing2020detecting}.
This work presents a general algebraic technique to investigate identifiability of discrete models with latent and graphical components, characterize the minimal identifiability requirements for a class of such models motivated by diagnostic test applications, and along the way reveal a new geometry about multidimensional latent structures -- the blessing of dependence for identifiability.

A set of parameters for a family of models are said to be identifiable, if distinct values of the parameters correspond to distinct distributions of the observed variables.
Identifiability is a fundamental prerequisite for valid statistical inference. 
Identifiability of discrete statistical models with latent variables is known to be challenging to study, partly due to their inherent nonlinearity.
Latent class models \citep[LCMs;][]{lazarsfeld1968latent} are the simplest form of discrete latent structure models, which assumes a univariate discrete latent variable renders the multivariate categorical responses conditional independent.
Despite the seemingly simple structure and the popularity of LCMs in various applications, their identifiability issues eluded researchers for decades.
\cite{goodman1974} investigated several specific small-dimensional LCMs, some being identifiable and some not.
\cite{gyllenberg1994non} proved LCMs with binary responses are not \textit{strictly identifiable}. \cite{carreira2000practical} empirically showed the so-called practical identifiability of LCMs using simulations. 
And finally, \cite{allman2009} provided a rigorous statement about the \textit{generic identifiability} of LCMs, whose proof leveraged Kruskal's Theorem from \cite{kruskal1977three} on the uniqueness of three-way tensor decompositions.

To be concrete, \textit{strict identifiability} means model parameters are identifiable everywhere in some parameter space $\mathcal T$.
A slightly weaker notion, \textit{generic identifiability} proposed by \cite{allman2009}, {is defined as the situation where identifiability occurs except for a subset $\mathcal N$ of the parameter space, with $\mathcal N$ being the zero-set of nonzero polynomials in the model parameters. In parametric settings with a finite number of parameters, a zero-set of polynomials $\mathcal N$ is either the whole parameter space, or a lower-dimensional subset of it and thus occupying Lebesgue measure zero in the parameter space.} 
In some cases, these measure-zero subsets may be trivial, such as simply being the boundary of the parameter space. In some other cases, however, these subsets may be embedded in the interior of the parameter space, or even carries rather nontrivial geometry and interesting statistical interpretation (as is the case in this work under minimal conditions for generic identifiability).
A precise characterization of the measure-zero subset where identifiability breaks down is essential to performing correct statistical analysis and hypothesis testing \citep{drton2009lrt}. But it is often hard to obtain a complete understanding of such sets or to derive sharp conditions for identifiability in complicated latent variable models. {These issues become more challenging when there exist sparse graphs in a latent variable model, which would induce many additional constraints on the model parameters.}

{In the literature, \cite{allman2008covarion} first used Kruskal's Theorem \citep{kruskal1977three} to prove the identifiability of covarion models in phylogenetics.
Later in a seminal paper, \cite{allman2009} established identifiability for various latent structure models by laying out a general framework of leveraging and transforming Kruskal's Theorem. Their proof strategy has been extended to show identifiability in a variety of settings including  stochastic blockmodels, nonparametric hidden Markov models, and psychometric models
\citep[e.g.,][]{allman2011parameter, gassiat2016inference, fang2019, culpepper2019ordinal, chen2020slcm, fang2020bifactor}.}
These identifiability proofs using Kruskal's Theorem often rely on certain global rank conditions of the tensor formulated under the model.
Instead, we characterize a useful transformation property of the Khatri-Rao tensor products of arbitrary discrete variables' probability tables.
We then use this property to investigate how any specific parameter impacts the zero set of polynomials induced by the latent and graphical constraints.
This general technique covers as a special case a result in \cite{xu2017rlcm} for restricted latent class models with binary responses.
Our approach will allow us to study identifiability at the {finest} possible scale (rather than checking global rank conditions of tensors), and hence help 
characterize the aforementioned measure-zero non-identifiable sets.

We provide an overview of our results.
Motivated by epidemiological and educational diagnostic tests, we focus on discrete models with multiple binary latent variables, where the latent-to-observed measurement graph is a forest of star trees. Namely, each latent variable can have several observed noisy proxy variables as children. 
We allow the binary latent variables to have any possible dependencies among themselves. 
Call this model the \textit{Binary Latent cliquE Star foreSt (BLESS)} model. 
We characterize the necessary and sufficient graphical criteria for strict and generic identifiability, respectively, of the BLESS model; this includes identifying both the discrete star-forest structure and the continuous parameters. 
Under the minimal conditions for generic identifiability that each latent variable has \emph{exactly two} observed children, we show that the measure-zero set $\mathcal N$ in which identifiability breaks down is the independence model of the latent variables.
That is, our identifiability condition delivers a geometry of \textit{blessing-of-dependence} -- the statistical dependence between latent variables can help restore identifiability.
Building on the blessing of dependence, we propose a formal statistical hypothesis test of identifiability in the boundary case. In this case, testing identifiability amounts to testing the marginal dependence of the latent variables' observed children.

{We point out that the blessing-of-dependence is not a new concept in the literature, in that it has been discovered for some other latent variable models.} For example, in the traditional factor analysis model with \emph{continuous Gaussian} latent and observed variables, it is known that if two latent factors are each measured by two observed variables, then the parameters are identifiable if and only if the two latent factors are correlated \citep[see, e.g. Chapter 7 in][]{bollen1989sem}. 
{As another example, independent nonparametric mixture models are not identifiable in
general; however, \cite{gassiat2016inference} established that hidden Markov models with nonparametric components are identifiable. 
This result implies that the latent dependence in the form of a latent Markov model helps with identifiability.
Also, \cite{gassiat2016nonpahmm} proved that, for a family of translation mixture models, identifiability holds without any assumption on the translated distribution provided that the latent variables are indeed not independent.}
On the other hand, for discrete non-Gaussian graphical models with latent variables, the identifiability issue can be more complicated because the observed distributions cannot be simply summarized as covariance matrices but rather take the form of higher-order tensors subject to graphical constraints. 
To this end, this work contributes a generally useful technique to study identifiability and reveal new geometry for such discrete models.

The rest of this paper is organized as follows.
Section \ref{sec-setup} introduces the formal setup of the BLESS model and several relevant identifiability notions.
Section \ref{sec-main} presents the main theoretical results of identifiability and overviews our general proof technique.
Section \ref{sec-extend} extends beyond the BLESS model and shows identifiability and the blessing-of-dependence geometry in two more complex model setups: one with a higher-order latent structure, and the other with a general measurement graph beyond a star forest.
Section \ref{sec-test} proposes a statistical hypothesis test of identifiability. 
Section \ref{sec-prac} presents a real-world example. 
Section \ref{sec-disc} provides further discussions and concludes the paper.
The Supplementary Material \cite{supplement} contains the technical proofs of all theoretical results, details of the algorithms, and an additional real-world example.

\section{Model setup and identifiability notions}\label{sec-setup}

\subsection{Binary Latent cliquE Star foreSt (BLESS) model}
We next introduce the setup of the BLESS model, the focus of this study.
For an integer $m$, denote $[m]=\{1,\ldots,m\}$. For a $K$-dimensional vector $\bo x=(x_1,\ldots, x_K)$ and some index $k\in[K]$, denote the $(K-1)$-dimensional vector by $\bo x_{-k} = (x_1,\ldots,x_{k-1},x_{k+1},\ldots,x_K)$.
Consider discrete statistical models with $K$ binary latent variables $a_1,\ldots,a_K 
 \in \{0,1\}$ and $p$ categorical observed variables $y_1,\ldots,y_p$ with $y_j\in[d]$.
It is possible to extend our identifiability results to the case where $y_j\in[d_j]$ with different $d_1,d_2,\ldots,d_p$, but for ease of exposition, we focus on the case of a common number of categories across all observed variables.
{Allowing $d\geq 2$ covers both the binary response case ($d=2$) and the polytomous response case ($d>2$).}
Both the latent vector $\bo a = (a_1,\ldots,a_K) \in \{0,1\}^K$ and the observed vector $\bo y=(y_1,\ldots,y_p) \in [d]^p$ are subject-specific, and have their realizations for each subject $i$ in a random sample.
For two random vectors (or variables) $\bo x$ and $\bo y$, write $\bo x \perp\!\!\!\perp \bo y$ if $\bo x$ and $\bo y$ are statistically independent, and $\bo x \not\! \perp\!\!\!\perp \bo y$ otherwise.

A key structure in the BLESS model is the latent-to-observed {measurement graph}. 
This is a bipartite graph with directed edges from the latent $a_k$'s to the observed $y_j$'s indicating direct statistical dependence.
The BLESS model posits that the measurement graph is a forest of star trees; namely,
each latent variable can have multiple observed variables as \emph{children}, but each observed variable has exactly one latent \emph{parent}. 
{Although assuming that each observed variable has exactly one latent parent seems to be somewhat restrictive, we point out that the dependence among the latent variables allows the observables to still have rich joint distributions.
This is because in the BLESS model, we allow the $K$ latent variables to be \emph{arbitrarily dependent}; e.g., the latent dependence can be induced by a complicated graphical model among the latent variables themselves or even induced by some deeper latent structures. 
In Section \ref{subsec-pyramid}, we will provide a concrete example where the dependence among latent variables is induced by a deeper-layer, high-order discrete latent structure; for that model we can still apply our identifiability result for the BLESS model.}
Next we introduce the mathematical notation to equivalently represent the measurement graph.  
Define a $p\times K$ \emph{graphical matrix} $\G=(g_{j,k})$ with binary entries, where $g_{j,k}=1$ indicates $a_k$ is the latent parent of $y_j$ and $g_{j,k}=0$ otherwise.
Each row of $\G$ contains exactly one entry of ``1'' due to the star-forest graph structure.
{For $j\in[p]$, denote the $j$th row vector of matrix $\G$ as $\bo g_j$.}
Statistically, the conditional distribution of $y_j\mid \bo a$ equals that of $y_j\mid a_{k}$ if and only if $g_{j,k}=1$.
We can therefore write the conditional distribution of $y_j$ given the latent variables as:
\begin{align*}
\forall c_j \in[d],\quad \mathbb P(y_j = c_j \mid \bo a, \G) = 
\mathbb P(y_j = c_j \mid a_{k},~ g_{j,k}=1)
=
\begin{cases}
\theta^{(j)}_{c_j\mid 1}, & \text{if } a_{k} = 1;\\[3mm]
\theta^{(j)}_{c_j\mid 0}, & \text{if } a_{k} = 0.
\end{cases}
\end{align*}

{For an integer $M\geq 2$, denote the $(M-1)$-dimensional  probability simplex embedded in the $M$-dimensional Euclidean space by $\mathcal S^{M-1}:= \{(x_1,\ldots,x_M): x_m\geq 0 ~\forall m\in[M],~ \sum_{m=1}^M x_m = 1\}$.}
To complete the model specification, we need to describe the distribution of the latent variables $\bo a=(a_1,\ldots,a_K)$. 
We  adopt the flexible saturated model by endowing each binary latent pattern $\aaa\in\{0,1\}^K$ with a  proportion parameter $\nu_{\aaa}=\mathbb P(\bo a = \aaa)$ satisfying $\sum_{\aaa\in\{0,1\}^K} \nu_{\aaa}=1$, where $\bo a$ is the latent profile of a random subject in the population. 
{We use a bold vector $\nnu$ to denote a $2^K$-dimensional vector which characterizes the probability mass function (PMF) of the $K$-dimensional binary latent vector $\bo a$. 
The $\nnu$ lies in the simplex $\mathcal S^{2^K-1}$ and it has $\nu_{\aaa}$ as entries with $\aaa$ ranging in $\{0,1\}^K$.}
Note that this saturated model parameterization covers many constrained latent variable distributions as special cases. For instance, if some latent graph exists among the latent variables or there exists some higher-order latent structures, the resulting joint distribution of the latent vector $\bo a$ would still satisfy our general assumption on $\nnu$; see Section \ref{subsec-pyramid} for a concrete example. 
{In such cases, the proposed conditions on $\G$ remain sufficient for identifying the parameters $\nnu$, whereas whether those parameters underlying $\nnu$ in the more specialized model are identifiable can then be studied by assuming $\nnu$ is already identified.}

Under the widely adopted local independence assumption (i.e., observed variables are conditionally independent given the latent), the probability mass function of the observed vector $\bo y$ takes the form:
\begin{align}\label{eq-model}
    \mathbb P(\bo y = \bo c\mid \G,  \bo \theta, \nnu)
    =
    \sum_{\aaa\in\{0,1\}^K} \nu_{\aaa} \prod_{j=1}^p 
    \prod_{k=1}^{K}
    \left[
    \left(\theta^{(j)}_{c_j\mid 1}\right)^{\alpha_{k}} 
    \cdot
    \left(\theta^{(j)}_{c_j\mid 0}\right)^{1-\alpha_{k}}\right]^{g_{j,k}},
\end{align}
where $\bo c=(c_1,\ldots,c_p)^\top\in\times_{j=1}^p [d]$ is an arbitrary response pattern.
We name the model as \textit{Binary Latent cliquE Star foreSt} (BLESS) model; see the later Figure \ref{fig-graph} for graphical model representations of the model with $K=5$ latent variables.
{Throughout this work, we make the following two assumptions on the parameters in a BLESS model:
\begin{align}\label{eq-nu-positive}
\nu_{\aaa}>0 \text{ for all }\aaa\in\{0,1\}^K; 
\\
\label{eq-flip}
\theta^{(j)}_{c_j\mid 1} > \theta^{(j)}_{c_j\mid 0} ~\text{ for }~ j\in[p],~ c_j\in[d-1].
\end{align}}
Here \eqref{eq-nu-positive} is our {only} assumption on the latent variable distribution, which simply requires $\nnu$ not to be on the boundary of the probability simplex $\mathcal S^{2^K-1}$.
If, however, $\nu_{\aaa}=0$ for certain $\aaa$, then the parameter space for proportions is deficient, which will change the sufficiency and necessity of the identifiability conditions; we leave the consideration of generic identifiability in this setting for future work.
As for \eqref{eq-flip}, the goal of this assumption is to avoid the non-identifiablility issue associated with the sign flipping of each binary latent variable ($a_k$ flipping between 0 and 1).
Assuming \eqref{eq-flip} could be understood as fixing the interpretation of $a_k$ to that possessing the latent trait als suys increases the response probability to the first $d-1$ non-baseline categories.
We emphasize that fixing any other direction of the inequality different from \eqref{eq-flip} equally works for our identifiability arguments; for example, one can assume $\theta^{(j)}_{1\mid 1} < \theta^{(j)}_{1\mid 0}$ and $\theta^{(j)}_{c_j\mid 1} > \theta^{(j)}_{c_j\mid 0}$ for $c_j\geq 2$.
The key in such assumptions like \eqref{eq-flip} is simply to avoid the equality $\theta^{(j)}_{c_j\mid 1} = \theta^{(j)}_{c_j\mid 0}$, which would lead to certain singularity and non-identifiability of some parameters.

In real-world applications, the BLESS model can be useful in epidemiological diagnostic tests, educational assessments, and social science surveys, where the presence/absence of multiple latent characteristics are of interest and there are several observed proxies measuring each of them.
For instance, in disease etiology in epidemiology \citep{wu2017nested}, we can use each $a_k$ to denote the presence/absence of a pathogen, and for each pathogen a few noisy diagnostic measures $y_j$'s are observed as the children of $a_k$.
See Section \ref{sec-prac} for another real-world example.
Our BLESS model is also interestingly connected to a family of models used in causal discovery and machine learning, the \emph{pure-measurement} models in \cite{silva2006latent}. Those are linear models of continuous variables, where the latent variables are connected in an acyclic causal graph; the commonality with the BLESS model is that each observed variable has at most one latent parent. The BLESS model can be thought of as a discrete analogue of such a pure-measurement model in \cite{silva2006latent}, and more general in terms of the latent dependence structure.



\subsection{Strict, generic, and local identifiability}

{Throughout this work, we assume the number of latent variables $K$ is fixed and known.}
We first define strict identifiability. 
All the model parameters are included in the identifiability consideration, including the conditional probabilities $\btheta = \{\theta^{(j)}_{c_j\mid 1}, \theta^{(j)}_{c_j\mid 0}\}$, the proportions $\nnu$, and the discrete measurement graph structure $\G$.

\begin{definition}[Strict Identifiability]
\label{def-str}
The BLESS model is said to be strictly identifiable, if for any valid parameters $(\G, \btheta, \nnu)$, the following equality holds if and only if $(\overline{\G}, \overline\btheta,  \overline\nnu)$ and $(\G, \btheta, \nnu)$ are identical up to a permutation of $K$ latent variables:
	\begin{align}\label{eq-id}
		\mathbb P(\bo y = \bo c\mid \G, \btheta, \nnu)
		=
		\mathbb P(\bo y = \bo c\mid \overline{\G}, \overline\btheta,  \overline\nnu),
		\quad
		\forall \bo c\in \times_{j=1}^p [d].
	\end{align}
\end{definition}

The ``identifiable up to latent variable permutation'' statement in Definition \ref{def-str} is an inevitable but trivial identifiability issue common to exploratory latent variable models, such as exploratory factor analysis and mixture models.
Note that if we consider the case where $\G$ is fixed and known, identifiability of the continuous parameters $\bo\theta$ and $\nnu$ are not subject to the latent variable permutation, because $\G$ matrix already fix the order of the latent variables via its columns.
We next define generic identifiability in the context of the BLESS model. Generic identifiability is a concept proposed and popularized by \cite{allman2009}.
Given a graphical matrix $\G$ and some valid continuous parameters $(\btheta,\nnu)$,
define: 
\begin{align}\notag
\mathcal N^{\G} = 
&~\{(\btheta, \nnu)\text{ are associated with some } \G:
~
\text{there exists}~ (\overline\btheta, \overline\nnu)~\text{associated with some } \overline{\G} \text{ such}~
\\ \notag
&~~
\text{that}~ \mathbb P(\yy\mid\G, \btheta, \nnu) = \mathbb P(\yy\mid\overline\G, \overline\btheta, \overline\nnu),\text{ where $(\overline{\G}, \overline\btheta,  \overline\nnu)$ and $(\G, \btheta, \nnu)$ are \emph{not} identical}\\
\label{eq-ns}
&~~ \text{after any latent variable permutation}\}.
\end{align}

\begin{definition}[Generic Identifiability]\label{def-genid}
A BLESS model is said to be generically identifiable, if for  valid parameters $(\G, \btheta, \nnu)$, the set $\mathcal N^{\G}$ defined in \eqref{eq-ns} has measure zero with respect to the Lebesgue measure on the parameter space of $(\btheta,\nnu)$. 
\end{definition}

Generic identifiability can often suffice for data analyses purposes as pointed out by \cite{allman2009}.
Finally, we define local identifiability of continuous parameters in the model.

\begin{definition}[Local Identifiability]\label{def-local}
Under a BLESS model, a continuous parameter $\mu$ (e.g., some entry of $\btheta$ or $\nnu$) is said to be locally identifiable, if there exists an open neighborhood $\mathcal S$ of every point in the parameter space of $\mu$ such that there does not exist any alternative parameter $\overline\mu\in\mathcal S$ leading to the same distribution of the response vector $\bo y$.
\end{definition}

The lack of local identifiability has severe practical consequences, because in an arbitrarily small neighborhood of the true parameter, there exist infinitely many alternative parameters that give rise to the same observed distributions. This would render any inference conclusions invalid. 

\section{Main theoretical results}\label{sec-main}

\subsection{Theoretical results of generic identifiability and their illustrations}
\label{sec-mainsub}

This subsection presents sharp identifiability conditions and the blessing-of-dependence geometry for the BLESS model. The later Section \ref{sec-overview} will provide an overview of the general algebraic proof technique used to derive the identifiability results.

It may be expected that each latent variable needs to have at least one observed child (i.e., $\sum_{j=1}^p g_{j,k}\geq 1$) to ensure identifiability of the BLESS model.
What may not be apparent at first is that such a condition is insufficient even for generic or local identifiability to hold, let alone strict identifiability.
Our first conclusion below shows the condition that each latent variable has at least two observed children is necessary for generic identifiability or local identifiability.

\begin{proposition}[Necessary Condition for Generic Identifiability: $\geq 2$ children]
\label{prop-nece}
The following two conclusions hold.
\begin{itemize}
    \item[(a)] If some binary latent variable has only one observed variable as child (i.e., $\sum_{j=1}^p g_{j,k}=1$ for some $k$), then the model is \textbf{not even} generically identifiable or locally identifiable.
    
    \vspace{2mm}
    \item[(b)] Specifically, suppose $a_k$ has only one observed $y_j$ as child, then any of the $\theta^{(j)}_{c\mid 0}$ and $\theta^{(j)}_{c\mid 1}$ for $c\in[d]$, and $\nu_{\aaa}$ for $\aaa\in\{0,1\}^K$ can not be generically or locally identifiable. In an arbitrarily small neighborhood of any of these parameters, there exist alternative parameters that lead to the same distribution of the observables indistinguishable from the truth. 
\end{itemize}
\end{proposition}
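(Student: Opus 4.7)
The plan is to exhibit an explicit smooth one-parameter family of alternative parameters $(\overline{\G},\overline\btheta,\overline\nnu)$ that produces exactly the same distribution of $\yy$; this will simultaneously establish both non-local identifiability of every parameter listed in part (b) and non-generic identifiability in part (a). First, I would exploit the star-forest structure: since $\alpha_k$'s only observed child is $y_j$, summing $\alpha_k$ out of \eqref{eq-model} gives
\begin{align*}
\mathbb P(\yy=\bo c)=\sum_{\aaa_{-k}}\pi_{\aaa_{-k}}\bigl[(1-\gamma_{\aaa_{-k}})\theta^{(j)}_{c_j\mid 0}+\gamma_{\aaa_{-k}}\theta^{(j)}_{c_j\mid 1}\bigr]\,\mathbb P(\yy_{-j}=\bo c_{-j}\mid\aaa_{-k}),
\end{align*}
where I write $(\aaa_{-k},s)$ for the pattern whose $k$-th entry is $s\in\{0,1\}$, $\pi_{\aaa_{-k}}:=\nu_{(\aaa_{-k},0)}+\nu_{(\aaa_{-k},1)}$, and $\gamma_{\aaa_{-k}}:=\nu_{(\aaa_{-k},1)}/\pi_{\aaa_{-k}}=\mathbb P(\alpha_k=1\mid\aaa_{-k})$. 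The key observation is that $\theta^{(j)}_{\cdot\mid 0}$, $\theta^{(j)}_{\cdot\mid 1}$, and $\gamma_{\aaa_{-k}}$ enter $\mathbb P(\yy)$ only through the bracketed mixture, while the remaining factors $\pi_{\aaa_{-k}}$ and $\mathbb P(\yy_{-j}\mid\aaa_{-k})$ depend only on parameters unrelated to $\alpha_k$.

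Next, I would define the deformation. For $\epsilon$ in a small neighborhood of $0$, set
\begin{align*}
\overline\theta^{(j)}_{c\mid 1}:=(1+\epsilon)\theta^{(j)}_{c\mid 1}-\epsilon\,\theta^{(j)}_{c\mid 0},\qquad
\overline\nu_{(\aaa_{-k},1)}:=\frac{\nu_{(\aaa_{-k},1)}}{1+\epsilon},\qquad
\overline\nu_{(\aaa_{-k},0)}:=\pi_{\aaa_{-k}}-\overline\nu_{(\aaa_{-k},1)},
\end{align*}
holding $\theta^{(j)}_{\cdot\mid 0}$, every other $\theta^{(j')}$, the marginals $\pi_{\aaa_{-k}}$, and $\G$ fixed. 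The induced $\overline\gamma_{\aaa_{-k}}=\gamma_{\aaa_{-k}}/(1+\epsilon)$ satisfies
\begin{align*}
(1-\overline\gamma_{\aaa_{-k}})\theta^{(j)}_{c\mid 0}+\overline\gamma_{\aaa_{-k}}\overline\theta^{(j)}_{c\mid 1}=(1-\gamma_{\aaa_{-k}})\theta^{(j)}_{c\mid 0}+\gamma_{\aaa_{-k}}\theta^{(j)}_{c\mid 1}
\end{align*}
by a one-line algebraic check, so the displayed formula above yields $\mathbb P(\yy\mid\overline\G,\overline\btheta,\overline\nnu)=\mathbb P(\yy\mid\G,\btheta,\nnu)$.

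Finally, I would verify feasibility and tie the conclusions together. For $|\epsilon|$ small enough: $\overline\theta^{(j)}_{\cdot\mid 1}$ sums to $1$ and remains strictly positive by continuity; the monotonicity condition \eqref{eq-flip} is preserved because $\overline\theta^{(j)}_{c\mid 1}-\theta^{(j)}_{c\mid 0}=(1+\epsilon)(\theta^{(j)}_{c\mid 1}-\theta^{(j)}_{c\mid 0})$; and each $\overline\nu_{(\aaa_{-k},s)}$ remains strictly positive. Since \eqref{eq-flip} forces $\theta^{(j)}_{\cdot\mid 1}\neq\theta^{(j)}_{\cdot\mid 0}$, the maps $\epsilon\mapsto\overline\theta^{(j)}_{c\mid 1}$ and $\epsilon\mapsto\overline\nu_\aaa$ are nonconstant, so every $\theta^{(j)}_{c\mid 1}$ and every $\nu_\aaa$ admits distinct alternatives in any neighborhood; and because these deformations leave $\G$ intact, no permutation of the $K$ latent variables can reconcile them with the originals. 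A symmetric deformation $\overline\theta^{(j)}_{c\mid 0}:=(1+\epsilon)\theta^{(j)}_{c\mid 0}-\epsilon\,\theta^{(j)}_{c\mid 1}$ with $\overline\gamma_{\aaa_{-k}}:=(\gamma_{\aaa_{-k}}+\epsilon)/(1+\epsilon)$ yields the analogous conclusion for the $\theta^{(j)}_{c\mid 0}$ parameters, completing part (b). Part (a) then follows because the construction applies at every valid parameter, so $\mathcal N^{\G}$ fills the entire parameter space and therefore does not have Lebesgue measure zero. I anticipate the only nontrivial step to be spotting the right invariant — the mixture $(1-\gamma)\theta^{(j)}_{\cdot\mid 0}+\gamma\theta^{(j)}_{\cdot\mid 1}$ is preserved under a simultaneous shift of $\theta^{(j)}_{\cdot\mid 1}$ along the affine line through $\theta^{(j)}_{\cdot\mid 0}$ together with a matching rescaling of $\gamma_{\aaa_{-k}}$ — after which the equality of distributions and the feasibility checks reduce to direct algebra.
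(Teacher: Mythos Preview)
Your proposal is correct and is essentially the same construction as the paper's: both fix all $\btheta^{(j')}$ with $j'\neq j$ and $\theta^{(j)}_{\cdot\mid 0}$, slide $\theta^{(j)}_{\cdot\mid 1}$ along the affine line through $\theta^{(j)}_{\cdot\mid 0}$, and compensate by rescaling the $\nu_{(\aaa_{-k},1)}$'s (your $\epsilon$ corresponds to the paper's ratio $(\ov\theta^{(1)}_{1\mid 1}-\theta^{(1)}_{1\mid 0})/(\theta^{(1)}_{1\mid 1}-\theta^{(1)}_{1\mid 0})-1$). Your explicit symmetric deformation for $\theta^{(j)}_{\cdot\mid 0}$ is a nice addition that the paper's proof leaves implicit.
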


Since local or generic identifiability are weaker notions than strict identifiability, the conclusion of ``{not even generically or locally identifiable}'' in Proposition \ref{prop-nece} also implies the failure of strict identifiability.
Such a conclusion has quite severe consequences in parameter interpretation or estimation. 
There will be one-dimensional continuum of each of $\theta^{(j)}_{c\mid 0}$ and $\theta^{(j)}_{c\mid 1}$ for $c\in[d]$, and $\nu_{\aaa}$ for $\aaa\in\{0,1\}^K$, that lead to the same probability mass function of the response vector $\bo y$.
As revealed in part (b) of Proposition \ref{prop-nece}, the parameter space will have ``flat regions'' where identifiability is no hope, hence any statistical analysis in this scenario will be meaningless.

In Figure \ref{fig-prop1}, we provide a numerical example to illustrate Proposition \ref{prop-nece}. Consider $p=5$ binary responses and $K=3$ latent variables with a graphical matrix $\G= (1 0 0;~ 0 1 0;~ 0 0 1;~ 0 1 0;~ 0 0 1)$. This $\G$ indicates that latent variable $\alpha_1$ has only one observed child $y_1$, violating the necessary identifiability condition in Proposition \ref{prop-nece}.
In the left panel of Figure \ref{fig-prop1}, the $x$-axis records nine continuous parameters, including one conditional probability $\theta^{(1)}_{1\mid 1}$ and $2^K=8$ proportions for the binary latent pattern; the black solid line represents true parameters, while the 150 colored lines represent 150 sets of alternative parameters in a neighborhood of the truth constructed based on the proof of Proposition \ref{prop-nece}. 
To see the non-identifiablility, we calculate the probability mass function of the response vector $\bo y$, which has $2^p = 32$ entries, and plot it under the true and alternative parameter sets in the right panel of Figure \ref{fig-prop1}. The $x$-axis in the plot presents the indices of the response patterns $\bo c\in\{0,1\}^5$, and the $y$-axis presents the values of $\mathbb P(\bo y = \bo c\mid \G,\btheta,\nnu)$, where the ``$+$'' symbols correspond to response probabilities given by the true parameters and the ``${\bigcirc}$'' represents those given by the 150 sets of alternative parameters. The response probabilities of the observables given by all the alternative parameters perfectly equal those under the truth.
This illustrates the severe consequence of lack of local identifiability.

\begin{figure}[h!]
    \centering
    \includegraphics[width=0.9\linewidth]{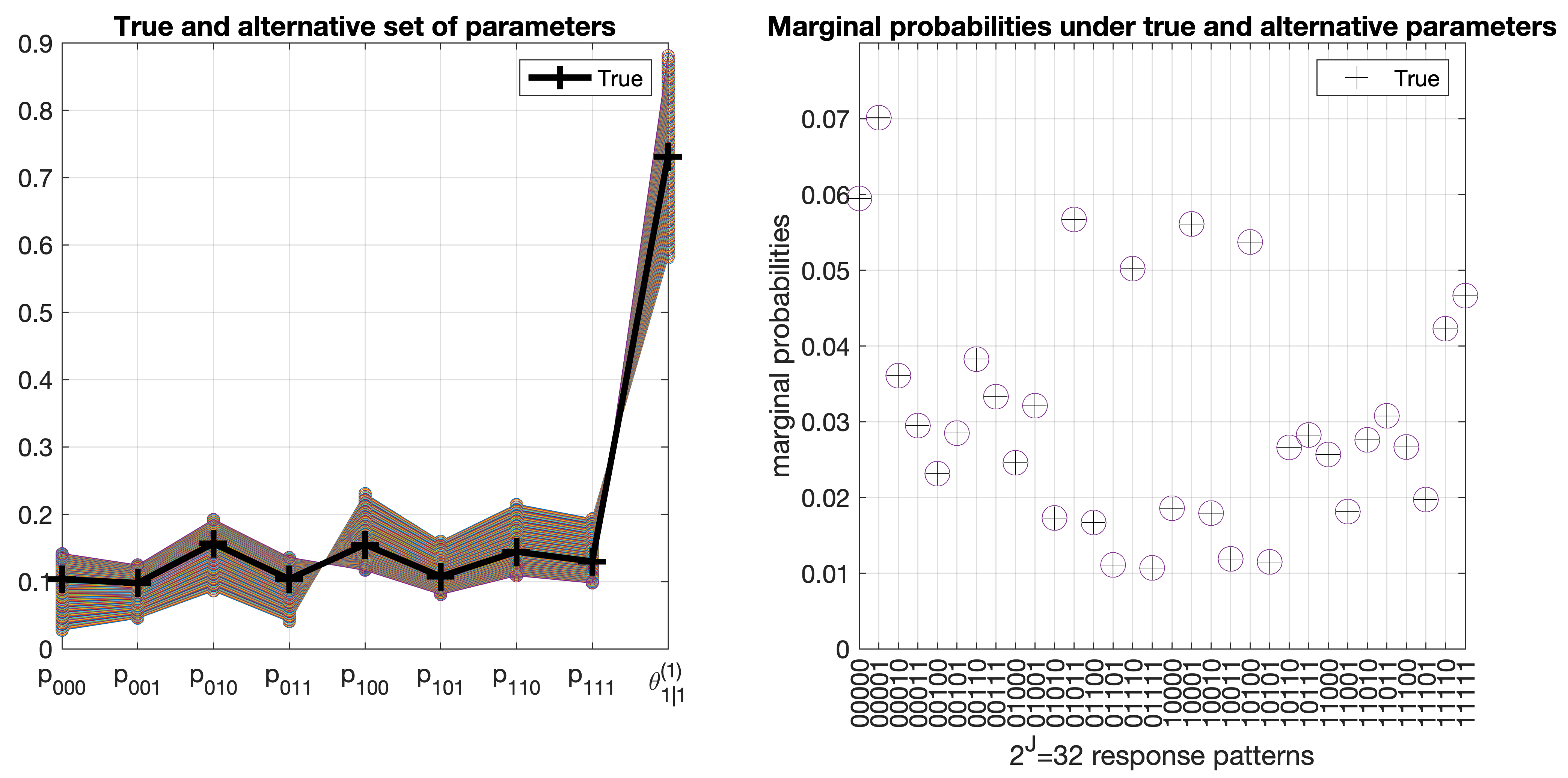}
    \caption{Illustrating Proposition \ref{prop-nece}, severe consequence of lack of local identifiability. 
    Left: the black line represents the true set of parameters and each colored line represents an alternative set of parameters. Right: marginal probability mass functions of the observed $\bo y \in \{0,1\}^5$ are plotted for all the parameter sets, ``$+$'' for the true set overlaid with circles ``${\bigcirc}$'' for 150 alternative sets.}
    \label{fig-prop1}
\end{figure}



Since each latent variable needs to have $\geq 2$ observed children for generic identifiability to possibly hold, next we focus on this setting.
The next theorem establishes a technically nontrivial result that such a condition is sufficient for identifying the matrix $\G$ in the BLESS model.

\begin{theorem}[Identifiability of the Latent-to-observed Star Forest $\G$]\label{thm-graph}
In the BLESS model, 
if each latent variable has at least two observed variables as children (i.e., $\sum_{j=1}^p g_{j,k} \geq 2$ for all $k\in[K]$), then the latent-to-observed star forest structure $\G$ is identifiable up to the permutation of the $K$ latent variables {(that is, $\G$ is identifiable up to the permutation of its $K$ columns)}.
\end{theorem}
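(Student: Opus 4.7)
The plan is to show that the partition $\{C_1, \ldots, C_K\}$ of observables by latent parent is uniquely determined by the joint distribution $\mathbb{P}(\bo y)$. Since $\G$ is equivalent to this partition (up to latent relabeling), it suffices to decide, for each pair $(y_j, y_{j'})$, whether they share a common parent; once pairwise co-parentage is known, the partition, and hence $\G$, is recovered.

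First I would write the joint probability in Khatri-Rao form. Grouping observables by parent, $\vect(\mathbb{P}(\bo y)) = \bigl(\Psi^{(1)} \odot \cdots \odot \Psi^{(K)}\bigr) \nnu$, where $\Psi^{(k)} = \bigodot_{j \in C_k} \Theta^{(j)}$ is a $d^{|C_k|} \times 2$ Khatri-Rao product whose two columns are linearly independent thanks to \eqref{eq-flip}. For any candidate bipartition $(A, B)$ of $[p]$, reshape $\mathbb{P}(\bo y)$ into a $d^{|A|} \times d^{|B|}$ matrix $M_{A, B}$. When $(A, B)$ respects the true parent partition (each $C_k$ lies entirely in one side), $M_{A, B}$ factorizes as $\Phi^{(A)} N_{A, B} (\Phi^{(B)})^\top$, where $\Phi^{(A)}, \Phi^{(B)}$ have full column rank and $N_{A, B}$ is a marginal of the latent proportion tensor reshaped to size $2^{s_A} \times 2^{s_B}$, with $s_A, s_B$ counting parent groups on each side; hence $\rank(M_{A, B}) \le \min(2^{s_A}, 2^{s_B})$. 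When $(A, B)$ splits some $C_k$ across both sides, the split group contributes additional linearly independent factors and inflates this rank bound. Since $|C_k| \ge 2$ for every $k$, for any pair $(y_j, y_{j'})$ one can form a balanced $d^2 \times d^2$ split by pairing it against a ``witness'' pair $(y_l, y_{l'})$ drawn from another parent group; such a flattening has rank at most $2$ when both pairs are truly co-parented and is generically strictly larger otherwise, so sweeping over candidates pins down the partition.

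The main obstacle is that a bare rank test is not sharp everywhere in the parameter space. When the latent proportion submatrix is rank-deficient (most notably, when the $\alpha_k$'s are independent), the rank of $M_{A, B}$ can coincide between the true bipartition and some wrong ones. In the independence case, however, the joint mass function factorizes multiplicatively into $K$ products, one per parent group, and a uniqueness argument for this multiplicative decomposition, together with the linear independence of the columns of each $\Theta^{(j)}$ guaranteed by \eqref{eq-flip} and the positivity $\nu_\aaa > 0$, reads the partition off directly. Stitching these two regimes into a single uniform identifiability argument is the crux; the paper's general algebraic technique, founded on a transformation property of Khatri-Rao tensor products, appears designed for exactly this purpose and delivers more refined information than standard Kruskal-type rank lemmas, the latter being inapplicable here because the tensor modes are aggregated blocks of arbitrary size rather than single observables.
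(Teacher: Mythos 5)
There is a genuine gap, and you have in fact located it yourself without closing it. Theorem \ref{thm-graph} asserts that $\G$ is identifiable for \emph{every} valid parameter $(\btheta,\nnu)$ with $\nu_{\aaa}>0$ and \eqref{eq-flip} -- it is a strict statement, not a generic one. Your flattening argument only establishes that a wrong bipartition ``generically'' inflates $\rank(M_{A,B})$, because the actual rank of $M_{A,B}$ equals the rank of the reshaped latent proportion tensor $N_{A,B}$, which can be deficient in many ways short of full independence of the $\alpha_k$'s (e.g.\ $\alpha_1\indep\aaa_{-1}$ with $\aaa_{-1}$ arbitrarily dependent, or any other algebraic degeneracy of a marginal of $\nnu$). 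Your proposed patch treats only the two extremes -- full-rank $N_{A,B}$ and fully independent latents -- and the ``stitching'' of these regimes, which you correctly identify as the crux, is precisely the step you do not carry out; deferring it to ``the paper's general algebraic technique'' is not a proof. As written, the argument delivers at best generic identifiability of $\G$, which is strictly weaker than the claim.

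For contrast, the paper's proof avoids rank conditions on $\nnu$ entirely. It applies the transformation of Lemma \ref{lem-poly} with carefully chosen offset vectors $\bo\Delta_j$ (built from the \emph{alternative} parameters $\bar\theta^{(j)}_{c\mid 0},\bar\theta^{(j)}_{c\mid 1}$) so that a designated row of $\bigodot_j(\ov{\bo\Phi}^{(j)}-\bo\Delta_j\one^\top)$ vanishes identically; equation \eqref{eq-trans} then forces a polynomial identity in the \emph{true} parameters of the form (product of factors that are nonzero by \eqref{eq-flip} and Lemma \ref{lem-neq}) $\times\;\sum_{\aaa\in\mathcal M}\nu_{\aaa}=0$ for a nonempty index set $\mathcal M$, which contradicts $\nu_{\aaa}>0$. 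Because the only fact used about $\nnu$ is strict positivity, the conclusion holds on all of the parameter space, which is exactly what your rank test cannot give. If you want to salvage your route, you would need a separate, complete argument covering every configuration of $\nnu$ for which the rank comparison is inconclusive, not just the product-measure case.
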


The proof of the above Theorem \ref{thm-graph} reveals that to identify $\G$, we only need certain lower-order marginal distributions of $y_j$'s rather than the full joint distribution of all the $p$ observed variables.

We have the following main theorem on generic identifiability, which reveals the ``blessing of dependence'' phenomenon.
Denote by $\text{\normalfont{Child}}(a_k) \;\big |\; a_k$ the conditional distribution of all the child variables of $a_k$ given $a_k$; hence $\text{\normalfont{Child}}(a_k) = \{y_j:\; g_{j,k}=1\}$.
Specifically, the parameters associated with $\text{\normalfont{Child}}(a_k) \;\big |\; a_k$ are the following conditional probabilities:
    $\left\{\btheta^{(j)}:\; y_j\in\text{Child}(a_k)\right\}
    = \left\{\theta^{(j)}_{1:d\mid 0},\; \theta^{(j)}_{1:d\mid 1}:\; g_{j,k}=1\right\}.$

\begin{theorem}[Blessing of Latent Dependence for the Two-children Case]\label{thm-main}
In the BLESS model,
suppose each latent variable has two observed variables as children. 
The following conclusions hold.

\begin{itemize}
    \item[(a)] {$\G$ is identifiable and parameters $(\btheta, \nnu)$ are generically identifiable.}
    
    \item[(b)] In particular, 
    the following two statements (S1) and (S2) are equivalent: 
\begin{itemize}
    \item[(S1)] 
    $a_k \not\!\perp\!\!\!\perp (a_1,\ldots,a_{k-1}, a_{k+1}, \ldots, a_K)$ holds; 
    
    \item[(S2)] parameters associated with the conditional distributions $\text{\normalfont{Child}}(a_k) \;\big |\; a_k$ 
    are identifiable.
\end{itemize} 
\end{itemize}
\end{theorem}

Notably, the case of each latent variable having two children in Theorem \ref{thm-main} forms the exact boundary for the blessing of dependence to play a role.
As long as each latent variable has at least three observed variables as children, the Kruskal's Theorem \citep{kruskal1977three} on the uniqueness of three-way tensor decompositions kicks in to ensure identifiability. We can use an argument similar to that in \cite{allman2009} to establish this conclusion, by concatenating certain observed variables into groups and transforming the underlying $p$-way probability tensor into a three-way tensor. The following proposition formalizes this statement.

\begin{proposition}[Kruskal's Theorem Kicks in for the $\geq 3$ Children Case]\label{prop-3chi}
Under the BLESS model,
if each latent variable has at least three observed children (i.e., $\sum_{j=1}^p g_{j,k} \geq 3$ for all $k\in[K]$), then the model is always strictly identifiable, regardless of the dependence between the latent variables. 
\end{proposition}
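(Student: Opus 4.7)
The plan is to follow the Allman--Matias--Rhodes strategy of recasting the joint distribution of $\bo y$ as a three-way tensor and invoking Kruskal's uniqueness theorem. The hypothesis $\sum_{j=1}^p g_{j,k} \geq 3$ is precisely what enables this three-way bundling.

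First I would partition $[p]$ into three sets $S_1, S_2, S_3$ so that each latent variable $\alpha_k$ has at least one observed child in every $S_t$. Since each $\alpha_k$ has three or more children, one can reserve three distinct children per latent, send one to each $S_t$, and distribute the remaining $y_j$'s arbitrarily. For $t \in \{1,2,3\}$, form the $d^{|S_t|} \times 2^K$ matrix $M_t$ whose $(\bo c_{S_t}, \aaa)$-entry is $\prod_{j \in S_t} \mathbb P(y_j = c_j \mid \alpha_{k(j)})$, where $k(j)$ is the unique latent parent of $y_j$. Then the joint probability array of $(\bo y_{S_1}, \bo y_{S_2}, \bo y_{S_3})$ admits the CP decomposition
\begin{equation*}
\bo \Pi \;=\; \sum_{\aaa\in\{0,1\}^K} \nu_{\aaa}\; M_1(\cdot,\aaa)\otimes M_2(\cdot,\aaa)\otimes M_3(\cdot,\aaa).
\end{equation*}

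Next I would verify Kruskal's rank condition. The key claim is $\rank(M_t) = 2^K$ for each $t$: choose one child $y_{j_k}$ of $\alpha_k$ from $S_t$ for every $k$ and marginalize out all the other row coordinates of $M_t$; the resulting $d^K \times 2^K$ matrix factorizes as the Kronecker product $\bigotimes_{k=1}^K T_k$, where $T_k$ is the $d \times 2$ matrix with columns $(\theta^{(j_k)}_{c\mid 0})_c$ and $(\theta^{(j_k)}_{c\mid 1})_c$. By \eqref{eq-flip} the two columns of each $T_k$ are linearly independent, so $\rank(T_k)=2$ and $\rank(\bigotimes_k T_k) = 2^K$. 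Since marginalizing rows is a linear operation that cannot increase rank, $M_t$ itself has rank $2^K$, so its Kruskal rank equals $2^K$. Summing gives $3 \cdot 2^K \geq 2\cdot 2^K + 2$ for all $K \geq 1$, which is Kruskal's hypothesis. Kruskal's theorem then tells me the triple of factor matrices is unique up to joint column rescaling and a common relabeling $\sigma \colon \{0,1\}^K \to \{0,1\}^K$ of the $2^K$ latent-pattern columns. Rescaling is killed because each column of $M_t$ sums to one and $\sum_{\aaa}\nu_{\aaa} = 1$.

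To reduce $\sigma$ to a permutation of the $K$ latent variables, I would look at the single-variable marginal $\mathbb P(y_j = c \mid \aaa)$, which depends on $\aaa$ only through $\alpha_{k(j)}$ in the true model and only through $\sigma(\aaa)_{\overline k(j)}$ in any alternative model. Because \eqref{eq-flip} gives the corresponding $d \times 2$ emission matrix two distinct columns, the map $\aaa \mapsto \sigma(\aaa)_{\overline k(j)}$ must factor through $\alpha_{k(j)}$, and a counting argument (since $\sigma$ is a bijection) plus the order constraint in \eqref{eq-flip} force this one-bit map to be the identity. Letting $j$ range over all children of a common latent parent shows they all point to the same $\overline k$, which defines a permutation $\pi$ of $[K]$ with $\sigma(\aaa)_{\pi(k)} = \alpha_k$. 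Identifiability of $\G$ is inherited from Theorem~\ref{thm-graph}, and each $\theta^{(j)}_{c\mid 0}, \theta^{(j)}_{c\mid 1}$ is then read off from the recovered factor columns. The main technical obstacle is precisely this last step: Kruskal delivers uniqueness only up to an arbitrary relabeling of $2^K$ atoms, whereas BLESS identifiability per Definition~\ref{def-str} demands uniqueness up to a permutation of only $K$ labels, and collapsing this $(2^K)!$ ambiguity to $K!$ genuinely uses both the product structure of each column and the monotonicity constraint \eqref{eq-flip}. A secondary but essential point is that the three-way partition genuinely needs $\geq 3$ children; a $\geq 2$ assumption would admit only a two-factor decomposition to which Kruskal's theorem does not apply, consistent with the fact that in the two-children regime only generic identifiability holds, as Theorem~\ref{thm-main} shows.
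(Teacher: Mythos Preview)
Your proposal is correct and follows essentially the same Allman--Matias--Rhodes route as the paper: partition the observed variables into three groups so that each latent has a child in every group, verify full Kruskal rank $2^K$ of each factor matrix via the Kronecker-product structure together with \eqref{eq-flip}, and invoke Kruskal's theorem. The paper's proof is terser on the last step---it simply asserts that the individual $\theta^{(j)}_{c\mid 0},\theta^{(j)}_{c\mid 1}$ and $\G$ can be read off from the recovered factor matrices---whereas you spell out more explicitly the reduction of the $(2^K)!$ column-permutation ambiguity to a $K!$ latent-variable permutation; otherwise the arguments coincide.
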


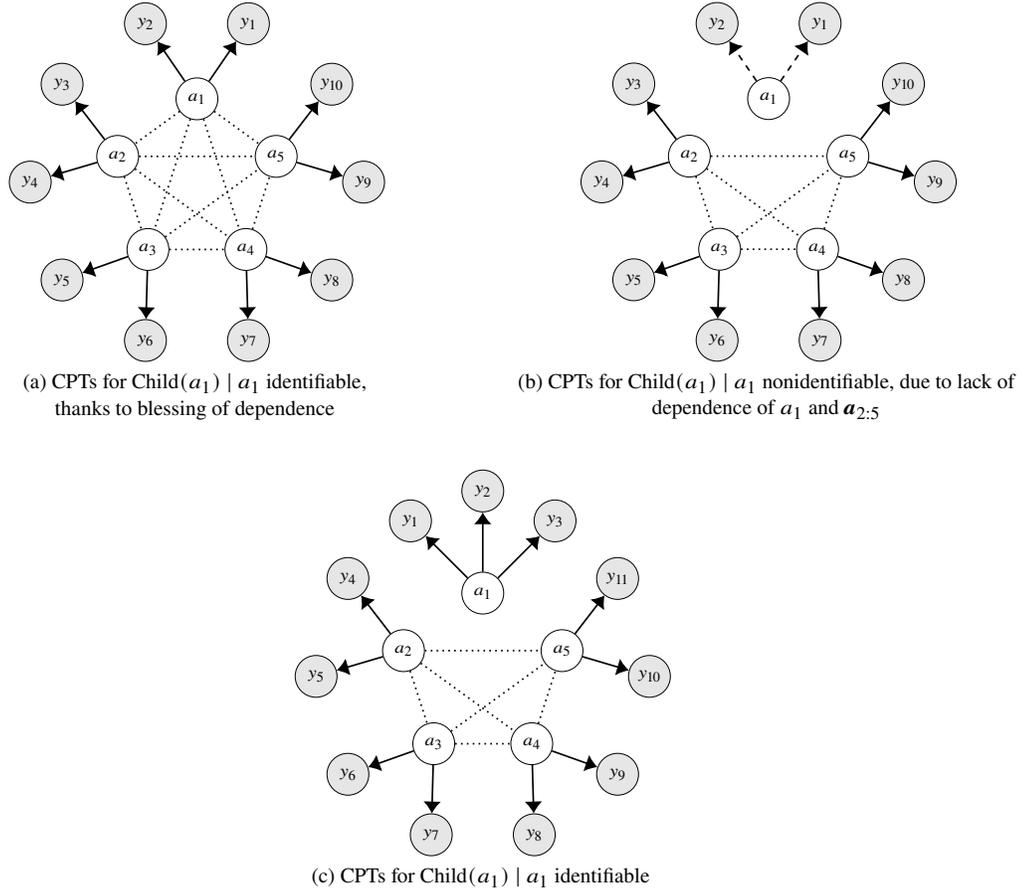
\begin{figure}[h!]
\centering
\begin{minipage}[c]{0.47\linewidth}\centering
\resizebox{0.75\linewidth}{!}{
\begin{tikzpicture}
\def\n {5}
\def\radius {1.4cm}

\foreach \s in {1,...,\n}
{
\node (\s)[draw, circle, minimum size=20pt, inner sep=0pt] at ({90+360/\n * (\s - 1)}:\radius) {$a_{\s}$};
}
\def \nn {10}
\def \radius {2.8cm}

\foreach \ss in {1,...,\nn}
{
\node (y\ss)[draw, circle, minimum size=20pt, inner sep=0pt, fill=black!10] at ({72+360/\nn * (\ss - 1)}:\radius) {$y_{\ss}$};
}

\draw[dotted, thick] (1) -- (2);
\draw[dotted, thick] (1) -- (3);
\draw[dotted, thick] (1) -- (4);
\draw[dotted, thick] (1) -- (5);
\draw[dotted, thick] (2) -- (3);
\draw[dotted, thick] (2) -- (4);
\draw[dotted, thick] (2) -- (5);
\draw[dotted, thick] (3) -- (4);
\draw[dotted, thick] (3) -- (5);
\draw[dotted, thick] (4) -- (5);

\draw[->, thick] (1) -- (y1);
\draw[->, thick] (1) -- (y2);
\draw[->, thick] (2) -- (y3);
\draw[->, thick] (2) -- (y4);
\draw[->, thick] (3) -- (y5);
\draw[->, thick] (3) -- (y6);
\draw[->, thick] (4) -- (y7);
\draw[->, thick] (4) -- (y8);
\draw[->, thick] (5) -- (y9);
\draw[->, thick] (5) -- (y10);
\end{tikzpicture}}

(a) CPTs for $\ch(a_1) \mid a_1$ identifiable,\\
thanks to blessing of dependence
\end{minipage}
\hfill
\begin{minipage}[c]{0.47\linewidth}\centering
\resizebox{0.75\linewidth}{!}{
\begin{tikzpicture}
\def \n {5}
\def \radius {1.4cm}

\foreach \s in {1,...,\n}
{
\node (\s)[draw, circle, minimum size=20pt, inner sep=0pt] at ({90+360/\n * (\s - 1)}:\radius) {$a_{\s}$};
}
\def \nn {10}
\def \radius {2.8cm}

\foreach \ss in {1,...,\nn}
{
\node (y\ss)[draw, circle, minimum size=20pt, inner sep=0pt, fill=black!10] at ({72+360/\nn * (\ss - 1)}:\radius) {$y_{\ss}$};
}
\draw[dotted, thick] (2) -- (3);
\draw[dotted, thick] (2) -- (4);
\draw[dotted, thick] (2) -- (5);
\draw[dotted, thick] (3) -- (4);
\draw[dotted, thick] (3) -- (5);
\draw[dotted, thick] (4) -- (5);

\draw[->, thick, dashed] (1) -- (y1);
\draw[->, thick, dashed] (1) -- (y2);
\draw[->, thick] (2) -- (y3);
\draw[->, thick] (2) -- (y4);
\draw[->, thick] (3) -- (y5);
\draw[->, thick] (3) -- (y6);
\draw[->, thick] (4) -- (y7);
\draw[->, thick] (4) -- (y8);
\draw[->, thick] (5) -- (y9);
\draw[->, thick] (5) -- (y10);
\end{tikzpicture}}

(b) CPTs for $\ch(a_1) \mid a_1$ nonidentifiable, due to lack of dependence of $a_1$ and $\bo a_{2:5}$
\end{minipage}

\bigskip
\begin{minipage}[c]{0.47\linewidth}\centering
\resizebox{0.75\linewidth}{!}{
\begin{tikzpicture}
\def \n {5}
\def \radius {1.4cm}

\foreach \s in {1,...,\n}
{
\node (\s)[draw, circle, minimum size=20pt, inner sep=0pt] at ({90+360/\n * (\s - 1)}:\radius) {$a_{\s}$};
}
\def \nn {11}
\def \radius {2.8cm}

\foreach \ss in {4,...,\nn}
{
\node (y\ss)[observed] at ({360/10 * (\ss)}:\radius) {$y_{\ss}$};
}

\node (y1) [observed, above left = 1 cm of 1] {$y_1$};
\node (y2) [observed, above = 1 cm of 1] {$y_2$};
\node (y3) [observed, above right = 1 cm of 1] {$y_3$};


\draw[dotted, thick] (2) -- (3);
\draw[dotted, thick] (2) -- (4);
\draw[dotted, thick] (2) -- (5);
\draw[dotted, thick] (3) -- (4);
\draw[dotted, thick] (3) -- (5);
\draw[dotted, thick] (4) -- (5);

\draw[->, thick] (1) -- (y1);
\draw[->, thick] (1) -- (y2);
\draw[->, thick] (1) -- (y3);

\draw[->, thick] (2) -- (y4);
\draw[->, thick] (2) -- (y5);
\draw[->, thick] (3) -- (y6);
\draw[->, thick] (3) -- (y7);
\draw[->, thick] (4) -- (y8);
\draw[->, thick] (4) -- (y9);
\draw[->, thick] (5) -- (y10);
\draw[->, thick] (5) -- (y11);
\end{tikzpicture}}

(c) CPTs for $\ch(a_1) \mid a_1$ identifiable
\end{minipage}

\caption{
CPTs refer to Conditional Probability Tables.
All nodes are discrete random variables, with $a_k\in\{0,1\}$ latent and $y_j\in\{1,\ldots,d\}$ observed. The parameters corresponding to the dashed directed edges in (b) are unidentifiable, because $a_1$ is indepedent of $\bo a_{2:5}$. 
}
\label{fig-graph}
\end{figure}

{The proof of Proposition \ref{prop-3chi} builds on Kruskal's Theorem, similar to many existing studies on the identifiability of discrete models. We present this side result to demonstrate the minimum condition under which Kruskal's Theorem directly kicks in to guarantee identifiability. Recall that the main result Theorem \ref{thm-main} assumes that each latent variable has only two observed children, in contrast to the condition assumed in Proposition \ref{prop-3chi}. Therefore, Theorem \ref{thm-main} along with Proposition \ref{prop-3chi} shows that the proposed proof technique can apply to cases where Kruskal's Theorem is not directly applicable.}

It is useful to give a graphical illustration of our identifiability results.
Figure \ref{fig-graph}(a)--(b) illustrate our generic identifiability conclusions and the blessing of dependence phenomenon.
With $K=5$ latent variables each having two observed variables as children (i.e., $\G = (\I_K;\; \I_K)^\top$), the parameters corresponding to Figure \ref{fig-graph}(a) are identifiable due to the dependence indicated by the dotted edges between $a_1,\ldots,a_5$; 
while the parameters corresponding to Figure \ref{fig-graph}(b) are not identifiable due to the lack of dependence between $a_1$ and $\bo a_{-1}:=(a_2,\ldots,a_5)$.
Such identifiability arguments guaranteed by Theorem \ref{thm-main}(b) are of a very fine-grained nature, stating that the dependence between a specific latent variable and the remaining ones determines the identifiability of the conditional probability tables given this very latent variable.

We provide a numerical example {with $K=2$} to corroborate the blessing-of-dependence geometry. 
Consider the BLESS model with each observed variable having $d=3$ categories and $\G=(\I_2;\; \I_2)^\top$.
We randomly generate $M=100$ sets of true parameters of the BLESS model.
Given a fixed sample size $N=10^4$, for each of the $M=100$ parameter sets we further generate $L=200$ independent datasets each with $N$ data points. We use an EM algorithm (Algorithm 1 in the Supplementary Material) to compute the maximum likelihood estimators (MLE) of the model parameters for each dataset; here we focus on estimating continuous parameters $(\btheta,\pp)$ with $\G$ fixed, because $\G$ is guaranteed to be identifiable by Theorem \ref{thm-graph}.
Ten random initializations are chosen for the EM algorithm and the one with the largest log likelihood value is taken as the MLE. After collecting the MLEs, we calculate the Mean Squares Errors (MSEs) of continuous parameters for each of the 100 true parameter sets.

\begin{figure}[h!]
     \centering
         \includegraphics[width=0.42\linewidth]{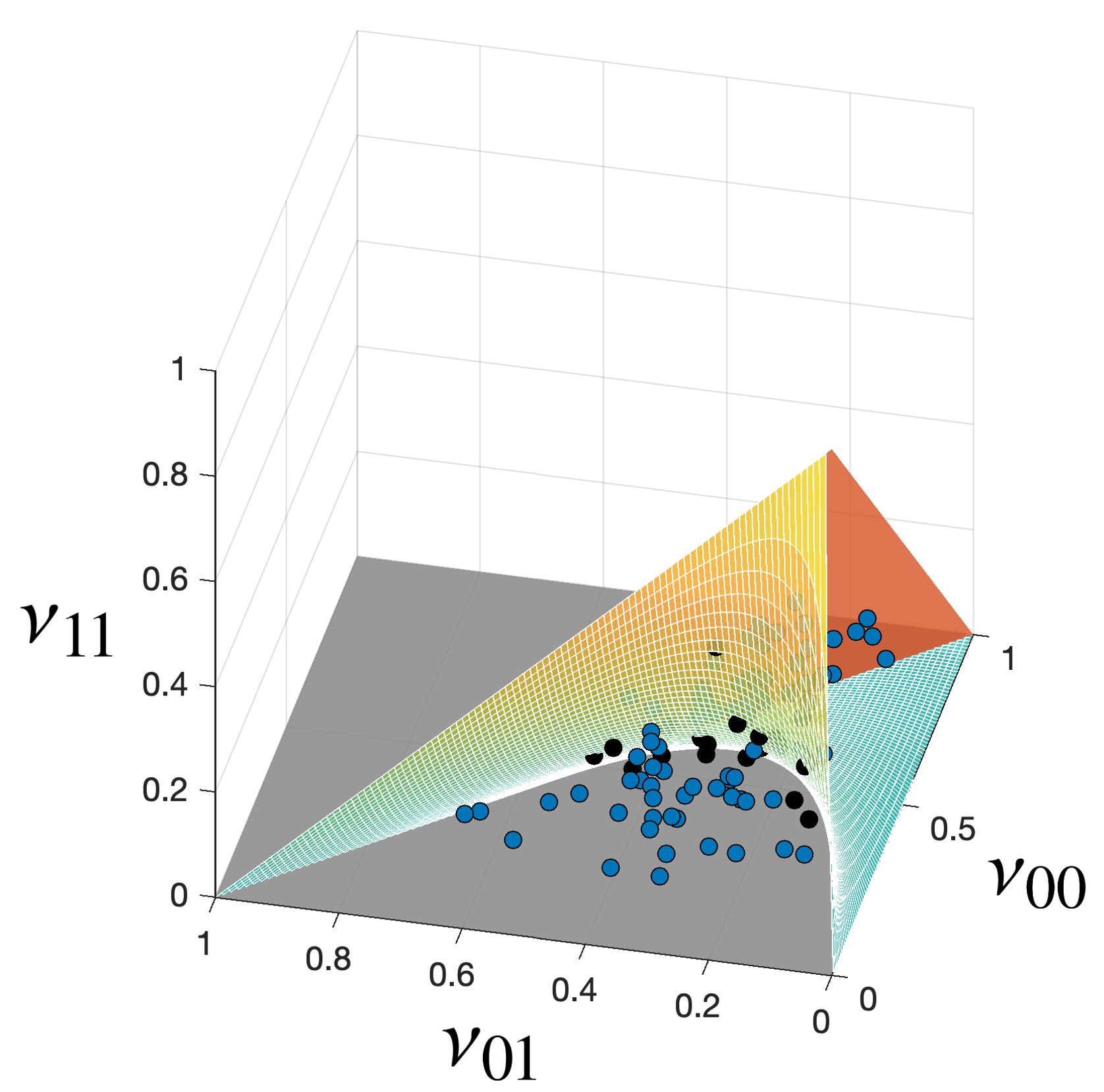}
         \hfill
         \includegraphics[width=0.4\linewidth]{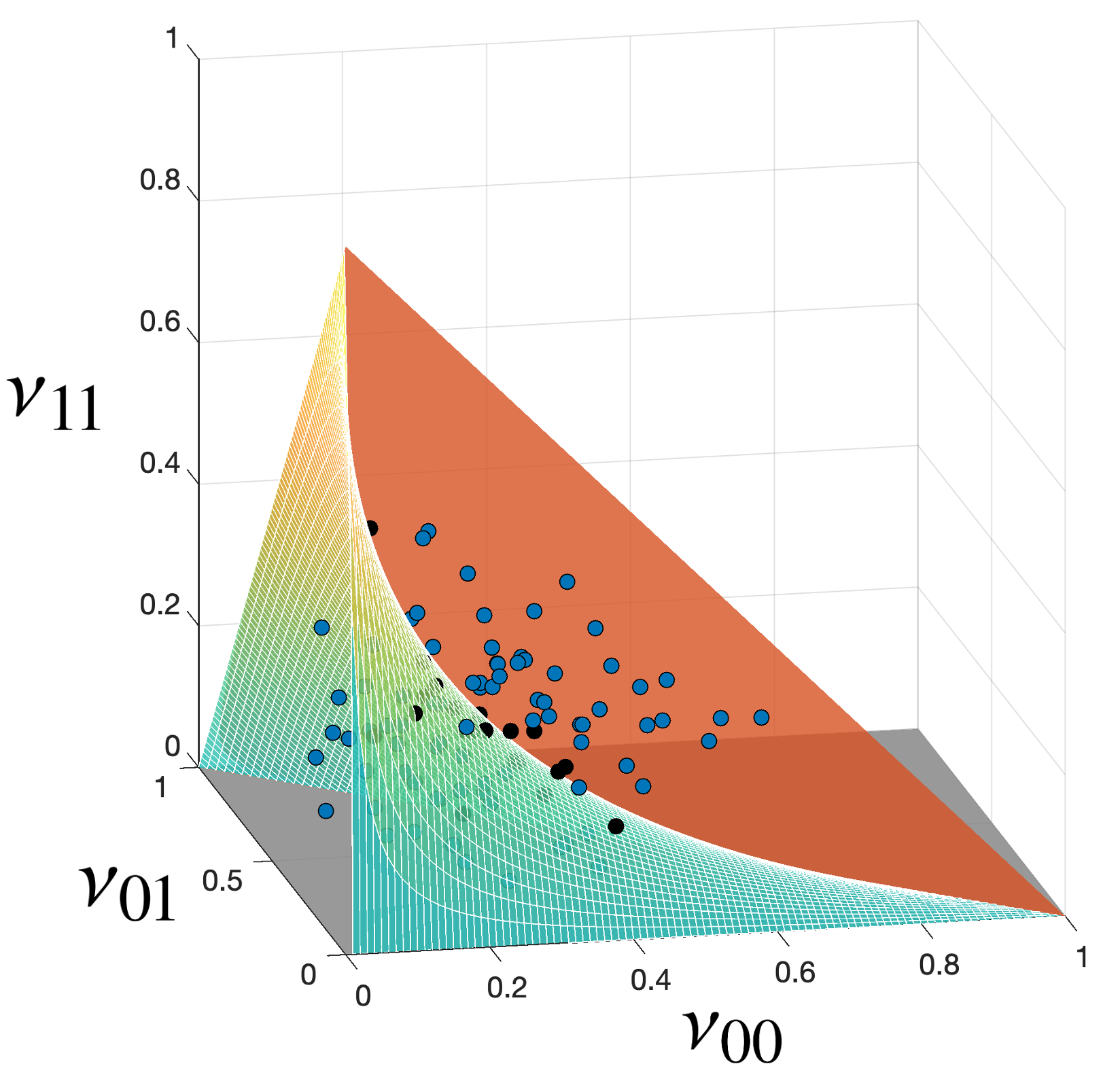}
         \caption{{Corroborating Theorem \ref{thm-main}. Two different views of the probability simplex $\mathcal S^3$ (tetrahedron) for the proportion parameters $\nnu$. The saddle surface $\mathcal N$ embedded in the simplex corresponds to the case with independent latent variables $a_1\indep a_2$.
         Black dots correspond to the 20 parameter vectors $\nnu^{(m)}$ with the largest 20 MSEs among the 100 vectors $\nnu^{(1)},\ldots,\nnu^{(100)}\in \mathcal S^3$, and blue dots correspond to the remaining 80 parameter vectors.}}
         \label{fig-mse}
\end{figure}


{Figure \ref{fig-mse} visualizes that parameter estimation becomes harder when true parameters get closer to the measure-zero non-identifiable subset of the parameter space. 
We next explain the details of this figure.
First note that the distribution of latent variables and the dependence among them are essentially characterized by the proportion parameters $\nnu =  (\nu_{00}, \nu_{01}, \nu_{10}, \nu_{11})$ where $\nnu_{\aaa} = \mathbb P(\bo a = \aaa)$ for $\aaa\in\{0,1\}^2$.
The parameter space for $\nnu$ is the three-dimensional probability simplex $\mathcal S^{3}$, and we choose to visualize $\mathcal S^{3}$ in $\mathbb R^3$ by using $\nu_{00}$, $\nu_{01}$, and $\nu_{11}$ as the $x$-, $y$-, and $z$-coordinates. Since $\nu_{00}, \nu_{01}, \nu_{11} > 0$ and $\nu_{00} + \nu_{01} + \nu_{11} < 1$, the parameter space for $(\nu_{00}, \nu_{01}, \nu_{11})$ takes the shape of a tetrahedron in $\mathbb R^3$ as depicted in the two different views of it in Figure \ref{fig-mse}.
{The triangle colored in orange represents one face of the simplex $\mathcal S^3$ that corresponds to $\nu_{00}+ \nu_{01}+\nu_{11}=1$.}
As a reference, in Figure \ref{fig-mse}(a) and (b) we also plot the measure-zero non-identifiable subset of $\mathcal S^3$, denoted by 
$$
\mathcal N = \{\nnu\in \mathcal S^3:\; \nnu\text{ satisfies } a_1 \indep a_2\}
= \{\nnu\in \mathcal S^3:\;
\nu_{00}\nu_{11} - \nu_{01}\nu_{10} = 0\}.
$$ 
Figure \ref{fig-mse} shows that the above subset $\mathcal N$ takes the shape of a smooth saddle surface embedded in the interior of the parameter space $\mathcal S^3$.
There are $M=100$ points inside the tetrahedron in Figure \ref{fig-mse}(a) and (b), each point corresponding to a particular parameter vector $\nnu^{(m)}\in\mathcal S^3$ where $m=1,2,\ldots,100$.
To inspect how the MSEs vary for different parameter vectors in $\mathcal S^3$, we plot those $\nnu^{(m)}$ with the largest 20 MSEs as black points and plot the remaining 80 vectors as blue points. 
Notably, the two views in Figure \ref{fig-mse} clearly show that the black points are closer to the saddle surface $\mathcal N$ which corresponds to $a_1\indep a_2$.
This observation means that when the true parameters $\nnu^{(m)}$ are closer to the non-identifiable measure-zero set $\mathcal N$, MSEs are larger and accurate estimation becomes statistically harder.
This simulation result empirically corroborates Theorem \ref{thm-main} and illustrates that the submodel with independent latent variables defines a singular subset within the {interior} of the parameter space.}

Summarizing all results in this section, we have the following conclusions.

\begin{corollary}\label{cor-ns} Consider the BLESS model {with a known number of latent variables $K$}. The following statements hold.

\begin{itemize}
\item[(a)]
The condition that each binary latent variable has $\geq 2$ observed variables as children is {necessary and sufficient} for the generic identifiability of the model parameters.
\item[(b)]
The condition that each binary latent variable has $\geq 3$ observed variables as children is {necessary and sufficient} for the strict identifiability of the model parameters.
\end{itemize}
\end{corollary}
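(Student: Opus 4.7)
The plan is to assemble Corollary \ref{cor-ns} from results already established in the paper, treating each direction of each part as an almost immediate consequence.

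For necessity in part (a), I would invoke Proposition \ref{prop-nece}: if some $\alpha_k$ has at most one observed child, the non-identifiable locus contains a continuum in any neighborhood of a true parameter, hence cannot be Lebesgue-null, and generic identifiability fails. For sufficiency, assume every $\alpha_k$ has at least two children. Theorem \ref{thm-graph} identifies $\G$; with $\G$ fixed, I would split the latent variables into those with exactly two children and those with three or more. For each $\alpha_k$ in the former group, Corollary \ref{cor-fine} yields identifiability of the conditional probabilities $\{\btheta^{(j)} : g_{j,k}=1\}$ except on the subset $\{\alpha_k \indep \aaa_{-k}\}$, which is the zero set of polynomials in $\nnu$ and thus Lebesgue-null inside the probability simplex. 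For each $\alpha_k$ in the latter group, Proposition \ref{prop-3chi} gives strict identifiability of the corresponding conditional probabilities. Taking the finite union over $k$ of these null subvarieties still yields a null set, off which all $\btheta^{(j)}$ are identified. The proportions $\nnu$ are then recovered from the joint distribution of $\yy$ by inverting the mixture representation in \eqref{eq-model} using the now-known conditional probability factors; this is a linear problem in $\nnu$ whose invertibility holds generically.

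For part (b), sufficiency is literally Proposition \ref{prop-3chi}. For necessity, suppose some $\alpha_k$ has at most two children. The $\leq 1$ case is covered by Proposition \ref{prop-nece}, which rules out even local identifiability and therefore strict identifiability. For the exactly-two case, I would select $\nnu$ in the interior of the simplex so that $\alpha_k \indep \aaa_{-k}$; the ``only if'' direction of Corollary \ref{cor-fine} then produces alternative parameters giving the same observable distribution, exhibiting a concrete point of the parameter space at which strict identifiability breaks down.

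The main obstacle, if one can call it that, is purely bookkeeping: verifying that the exceptional set in part (a) remains measure zero when multiple latents with two children each contribute their own independence subvariety, and confirming that once every $\btheta^{(j)}$ is known the remaining map $\nnu \mapsto \mathbb{P}(\yy\mid\cdot)$ is generically injective. Both are routine — a finite union of algebraic subvarieties cut out by nontrivial polynomial equations remains Lebesgue-null in the simplex, and the $\nnu$-to-$\mathbb{P}(\yy)$ map is affine with generically full column rank, which is exactly the kind of property the paper's Khatri--Rao technique is designed to deliver.
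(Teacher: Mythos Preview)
Your proposal is correct and essentially matches the paper, which states only that Corollary~\ref{cor-ns} is an immediate consequence of Theorem~\ref{thm-main} and Proposition~\ref{prop-3chi}; you have simply spelled out the necessity arguments (via Proposition~\ref{prop-nece} and the only-if direction of Theorem~\ref{thm-main}(b)/Corollary~\ref{cor-fine}) that the paper leaves implicit. The only quibble is that Proposition~\ref{prop-3chi} is stated under the global hypothesis that \emph{every} latent has $\geq 3$ children, so invoking it for an individual $\alpha_k$ when others have exactly two is technically informal---but the paper itself does the same in Example~\ref{exp-g73}, and the underlying latent-by-latent argument of Section~\ref{sec-overview} localizes without difficulty, so this is bookkeeping rather than a gap.
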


It is worth noting that both the minimal conditions for strict identifiability and those for generic identifiability only concern the discrete structure in the model -- the measurement graph $\G$, but not on the specific values of the continuous parameters $\btheta$ or $\nnu$.
{When the graph $\G$ is unknown, the identifiability condition on the true graph structure is not directly checkable from observational data. In practice, after one uses some statistical method to estimate all parameters (including the graph) from the observational data, then they may check whether the estimated graph satisfies the identifiability condition.}

\subsection{Overview of the proof technique and its usefulness}\label{sec-overview}
This subsection provides an overview of our identifiability proof technique.
For ease of understanding, we next describe the technique in the context of multidimensional binary latent variables; we will later explain that these techniques are applicable to more general discrete models with latent and graphical components.
{With $K$ binary variables, we next introduce the binary vector representations of the $2^K$ integers $1,2,3,\ldots,2^K$ by vectors $\aaa_1,\aaa_2,\ldots,\aaa_{2^K}\in\{0,1\}^K$. Specifically, define a $K$-dimensional vector $\bo w=(2^{K-1}, 2^{K-2}, \cdots, 2^0)^\top$ and let
$\aaa_{\ell}^\top \bo w = \ell-1$ for each $\ell=1,2,3,\ldots,2^K.$
The goal of introducing this vector $\bo w$ is to define an unambiguous way of ordering the $2^K$ vectors in $\{0,1\}^K$ as $\aaa_1,\ldots,\aaa_{2^K}$. For example, if $K=3$, then $\bo w = (2^2, 2^1, 2^0)^\top = (4,2,1)^\top$, and the equations $\aaa_{\ell}^\top \bo w = \ell-1$ for all $\ell=1,\ldots,8$ uniquely define the meaning of each binary vector $\aaa_{\ell}$: $\aaa_1=(0,0,0), \aaa_2 = (0,0,1)$, so on and so forth.}

With $p$ discrete observed variables $y_1,\ldots,y_p$, generally denote the conditional distribution of each $y_j$ given latent pattern $\aaa_\ell$ by 
$\theta_{c\mid \aaa_\ell}^{(j)} = \mathbb P(y_j=c\mid \bo a = \aaa_\ell)$, for
$j\in[p]$, $c\in[d]$, $\ell\in[2^K]$.
Note that under the BLESS model, the $\theta_{c\mid \aaa_\ell}^{(j)}$ is a reparametrization of the probabilities $\theta_{c\mid 1}^{(j)}$ and $\theta_{c\mid 0}^{(j)}$.
According to the star-forest measurement graph structure, whether $\theta_{c\mid \aaa_\ell}^{(j)}$ equals $\theta_{c\mid 1}^{(j)}$ or $\theta_{c\mid 0}^{(j)}$ depends only on whether or not the pattern $\aaa_\ell$ possesses the  latent parent of $y_j$.
Mathematically, since vector $\cg_j$ summarizes the parent variable information of $y_j$, we have
\begin{align}\label{eq-thetaeq}
    \theta_{c\mid \aaa_\ell}^{(j)} =
    \begin{cases}
    \theta_{c\mid 1}^{(j)}, & \text{if } \alpha_{\ell,k}=1 \text{ for the $k$ where } g_{j,k}=1;\\[2mm]
    \theta_{c\mid 0}^{(j)}, & \text{if } \alpha_{\ell,k}=0 \text{ for the $k$ where } g_{j,k}=1.
    \end{cases}
\end{align}
In the above expression, the $\alpha_{\ell,k}$ denotes the $k$th entry of the binary pattern $\aaa_\ell$.
For each observed variable index $j\in[p]$, define a $d\times 2^K$ matrix $\bo\Phi^{(j)}$ as
\begin{align*}
    \bo\Phi^{(j)} 
    &= 
    \begin{pmatrix}
     \mathbb P(y_j=1\mid \bo a = \aaa_1) &~ \cdots &~  \mathbb P(y_j=1\mid \bo a = \aaa_{2^K}) \\[2mm]
     \vdots &~ \vdots &~ \vdots \\[2mm]
     \mathbb P(y_j=d\mid \bo a = \aaa_1) &~ \cdots &~  \mathbb P(y_j=d\mid \bo a = \aaa_{2^K})
    \end{pmatrix}
    =
    \begin{pmatrix}
        \theta^{(j)}_{1\mid \aaa_1} &~  \cdots &~~ \theta^{(j)}_{1\mid \aaa_{2^K}} \\[2mm]
        \vdots &~ \vdots &~~ \vdots \\[2mm]
        \theta^{(j)}_{d\mid \aaa_1} &~  \cdots &~~ \theta^{(j)}_{d\mid \aaa_{2^K}}
    \end{pmatrix},
\end{align*}
then $\bo\Phi^{(j)}$ is the conditional probability table of variable $y_j$ given $2^K$ latent patterns. Each column of $\bo\Phi^{(j)}$ is indexed by a pattern $\aaa_\ell$ and gives the conditional distribution of variable $y_j$ given $\aaa_\ell$.
Note that many entries in $\bo\Phi^{(j)}$ are equal due to \eqref{eq-thetaeq}; we deliberately choose this overparameterized matrix notation to facilitate further tensor algebra. The equality of the many parameters in each $\bo\Phi^{(j)}$ will later be carefully exploited when examining identifiability conditions.

Denote by $\bigotimes$ the Kronecker product of matrices.
We also introduce the Khatri-Rao product of matrices following the definition in the tensor decomposition literature \cite{koldabader2009} in order to facilitate the presentation of our new technique.
Specifically, the Khatri-Rao product is a column-wise Kronecker product, and
for two matrices with the same number of columns $\mathbf A=(a_{i,j})=(\bo a_{\bcolon,1}\mid\cdots\mid\bo a_{\bcolon,k})\in\mathbb R^{n\times k}$,
$\mathbf B=(b_{i,j})=(\bo b_{\bcolon,1}\mid\cdots\mid\bo b_{\bcolon,k})\in\mathbb R^{\ell\times k}$, their Khatri-Rao product
$\mathbf A\bigodot \mathbf B \in\mathbb R^{n \ell\times k}$ still has the same number of columns and can be written as
	$\mathbf A\bigodot \mathbf B
	=
	\begin{pmatrix}
		\bo a_{\bcolon,1}\bigotimes\bo b_{\bcolon,1}
		~\mid~ \cdots ~\mid~
		\bo a_{\bcolon,k}\bigotimes\bo b_{\bcolon,k}
	\end{pmatrix}$.
Under the considered model, all the $d^p$ marginal response probabilities form a $p$-way tensor $\bo\Pi=(\pi_{c_1,\cdots,c_p})$, $c_j\in[d]$,
where each entry $\pi_{c_1,\cdots,c_p} = \mathbb P (y_1=c_1,\ldots,y_p=c_p\mid \text{measurement graph structure and parameters})$ denotes the marginal probability of observing the response pattern $\yy=\bo c$ under the latent variable model.
With the above notation, the probability mass function (PMF) of vector $\bo y$ under the BLESS model in \eqref{eq-model} can be equivalently written as
\begin{align}\label{eq-kreq}
    \vect(\bo\Pi)
    = \Big(\bigodot_{j=1}^p \bo\Phi^{(j)}\Big) \cdot \nnu,
\end{align}
where $\vect(\bo\Pi)$ denotes the vectorization of the tensor $\bo\Pi$ into a vector of length $d^p$. The Khatri-Rao product of $\bo\Phi^{(j)}$ in the above display results from the basic local independence assumption in \eqref{eq-model}.
We next state a useful technical lemma. 
The following lemma characterizes a fundamental property of the transformations of Khatri-Rao product of matrices.

\begin{lemma}\label{lem-poly}
Consider an arbitrary set of conditional probability tables $\{\bo\Phi^{(j)}: j\in[p]\}$, where $\bo\Phi^{(j)}$ has size $d_j\times 2^K$ with each column summing to one.
Given any set of vectors $\{{\bo\Delta}_j:\,{j\in[p]}\}$ with $\bo\Delta_j = (\Delta_{j,1},\ldots,\Delta_{j,d_j-1}, 0)^\top \in \mathbb R^{d_j\times 1}$, 
there exists a $\prod_{j=1}^p d_j \times \prod_{j=1}^p d_j$ \textbf{invertible} matrix $\mathbf B:=\mathbf B(\{\bo\Delta_j:\,{j\in[p]}\})$ determined entirely by $\{\bo\Delta_j:\,{j\in[p]}\}$ such that 
\begin{align}\label{eq-algebra}
	\bigodot_{j\in[p]} \Big(\bo\Phi^{(j)}-\bo\Delta_j\bcdot\one^\top_{2^K} \Big)
	&= \mathbf B\left(\{\bo\Delta_j:\,{j\in[p]}\}\right) \bcdot \Big(\bigodot_{j\in[p]} \bo\Phi^{(j)}\Big),
\end{align}
where $\bo\Delta_j\bcdot\one^\top_{2^K}$ is a $d_j\times 2^K$ matrix, of the same dimension as $\bo\Phi^{(j)}$.

In addition, replacing the index $j\in[p]$ in \eqref{eq-algebra} by $j\in S$ where $S$ is an arbitrary subset of $[p]$ on both hand sides still makes the equality holds.
\end{lemma}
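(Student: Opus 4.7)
The plan is to reduce each column-wise shift $\bo\Phi^{(j)} - \bo\Delta_j \one^\top_{2^K}$ to a left-multiplication by a small $d_j \times d_j$ matrix, and then assemble these factors across $j$ into one big Kronecker product. The crucial observation is that every column of a conditional probability table sums to one, so $\one^\top_{d_j}\,\bo\Phi^{(j)} = \one^\top_{2^K}$. Consequently,
$$
\bo\Delta_j \one^\top_{2^K} \;=\; \bo\Delta_j \bigl(\one^\top_{d_j}\,\bo\Phi^{(j)}\bigr) \;=\; \bigl(\bo\Delta_j \one^\top_{d_j}\bigr)\,\bo\Phi^{(j)},
$$
which lets me write $\bo\Phi^{(j)} - \bo\Delta_j \one^\top_{2^K} = \mathbf{B}_j\,\bo\Phi^{(j)}$ with $\mathbf{B}_j := \mathbf{I}_{d_j} - \bo\Delta_j \one^\top_{d_j}$, a $d_j\times d_j$ matrix depending only on $\bo\Delta_j$.

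Next I would invoke the standard mixed-product rule linking Kronecker and Khatri-Rao products: for any matrices $\mathbf{A}_j \in \mathbb{R}^{d_j \times d_j}$ and $\mathbf{X}_j \in \mathbb{R}^{d_j \times 2^K}$,
$$
\bigodot_{j\in[p]} \bigl(\mathbf{A}_j \mathbf{X}_j\bigr) \;=\; \Bigl(\bigotimes_{j\in[p]} \mathbf{A}_j\Bigr)\,\bigodot_{j\in[p]} \mathbf{X}_j.
$$
Applying this with $\mathbf{A}_j = \mathbf{B}_j$ and $\mathbf{X}_j = \bo\Phi^{(j)}$ and setting $\mathbf{B} := \bigotimes_{j\in[p]} \mathbf{B}_j$ then yields Equation \eqref{eq-algebra}. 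By construction $\mathbf{B}$ is determined entirely by $\{\bo\Delta_j\}$ and has the required dimensions $\prod_j d_j \times \prod_j d_j$.

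For invertibility, I would compute $\det(\mathbf{B}_j)$ directly. Expanding along the last row—which equals $(0,\ldots,0,1)$ thanks to the assumption $\Delta_{j,d_j} = 0$—reduces to the determinant of an upper-left block that is again a rank-one perturbation of the identity. By the matrix determinant lemma,
$$
\det(\mathbf{B}_j) \;=\; 1 - \one^\top_{d_j}\bo\Delta_j \;=\; 1 - \sum_{i=1}^{d_j-1} \Delta_{j,i},
$$
which is a nontrivial polynomial in the free entries of $\bo\Delta_j$ and is therefore nonzero on a Zariski-open set (and certainly in the applications within this paper, where the $\bo\Delta_j$ arise as probability-parameter differences bounded away from $1$). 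Whenever each $\det(\mathbf{B}_j) \neq 0$, $\mathbf{B}$ is a Kronecker product of invertible matrices and hence invertible, with $\det(\mathbf{B}) = \prod_j \det(\mathbf{B}_j)^{\prod_{j'\neq j} d_{j'}}$. The subset extension—replacing $[p]$ by any $S \subseteq [p]$—is immediate, since every step above carries over verbatim when the Kronecker and Khatri-Rao products are restricted to $j \in S$.

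The only conceptual step is the first one: recognizing that the rank-one outer-product correction $\bo\Delta_j \one^\top_{2^K}$ can be absorbed into a row operation on $\bo\Phi^{(j)}$ because of the unit column sums. Once that reformulation is in hand, the rest is a routine application of the mixed-product rule and a one-line determinant computation, so I do not expect a serious obstacle beyond bookkeeping.
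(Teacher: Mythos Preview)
Your proof is correct and follows essentially the same route as the paper's: factor each $\bo\Phi^{(j)}-\bo\Delta_j\one^\top_{2^K}$ as a $d_j\times d_j$ matrix times $\bo\Phi^{(j)}$, then take the Kronecker product via the mixed-product rule. Your single-matrix factorization $\mathbf B_j=\mathbf I_{d_j}-\bo\Delta_j\one_{d_j}^\top$ is in fact exactly equal to the paper's two-matrix product $\tilde{\bo\Delta}_j\mathbf C$, just written more directly; and your caveat that $\det(\mathbf B_j)=1-\sum_{i<d_j}\Delta_{j,i}$ need not be nonzero for arbitrary $\bo\Delta_j$ is more accurate than the paper's unconditional invertibility claim, though in every application within the paper the entries of $\bo\Delta_j$ are chosen so that this sum is a partial probability strictly less than one.
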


Note that Lemma \ref{lem-poly} covers more general settings than are currently considered, as $d_1,d_2,\ldots,d_p$ are allowed to be different.
Lemma \ref{lem-poly} covers as special case a result in \cite{xu2017rlcm} for restricted latent class models with binary responses.
Instead of exclusively considering moments of binary responses as \cite{xu2017rlcm}, our Lemma \ref{lem-poly} characterizes a general algebraic property of Khatri-Rao products of conditional probability tables of multivariate categorical data. 
This property will enable us to exert various transformations on the model parameters to investigate their identifiability. 
We provide a proof of Lemma \ref{lem-poly} below, because it is concise and delivers an insight into our technique's usefulness.
%

\begin{proof}[Proof of Lemma \ref{lem-poly}]
Consider an arbitrary subset $S\in[p]$.
The sum of all the entries in each column of $\bo\Phi^{(j)}$ is one because each column vector is a conditional probability distribution of $y_j$ given a specific latent pattern. Therefore with $\bo\Delta_j = (\Delta_{j,1},\ldots,\Delta_{j,d_j-1}, 0)^\top\in\mathbb R^{d_j}$, we have
\begin{align*}
    \bo\Phi^{(j)}-\bo\Delta_j\bcdot\one^\top_{2^K}
    &= 
    \begin{pmatrix}
        \theta^{(j)}_{1\mid \aaa_1}-\Delta_{j,1} &~  \cdots &~~ \theta^{(j)}_{1\mid \aaa_{2^K}}-\Delta_{j,1} \\[2mm]
        \vdots &~ \vdots &~~ \vdots \\[2mm]
        \theta^{(j)}_{d_j-1\mid \aaa_1}-\Delta_{j,d_j-1} &~  \cdots &~~ \theta^{(j)}_{d_j-1\mid \aaa_{2^K}}-\Delta_{j,d_j-1} \\[4mm]
        \theta^{(j)}_{d_j\mid \aaa_1} &~  \cdots &~~ \theta^{(j)}_{d_j\mid \aaa_{2^K}}
    \end{pmatrix}
    \\
    &=
    \begin{pmatrix}
    1 &~ 0  &~ \cdots &~ 0 & -\Delta_{j,1}\\
    0 &~ 1  &~ \cdots &~ 0 & -\Delta_{j,2}\\
    \vdots &~ \vdots &~ \ddots &~ 0 & \vdots\\
    0 &~ 0  &~ \cdots &~ 1 &\quad -\Delta_{j,d_j-1}\\
    -1 &~ -1  &~ \cdots & -1 & 1
    \end{pmatrix}
    \bcdot
    \begin{pmatrix}
    \theta^{(j)}_{1\mid \aaa_1} &~  \cdots &~~ \theta^{(j)}_{1\mid \aaa_{2^K}} \\[2mm]
        \vdots &~ \vdots &~~ \vdots \\[2mm]
        \theta^{(j)}_{d_j-1\mid \aaa_1} &~  \cdots &~~ \theta^{(j)}_{d_j-1\mid \aaa_{2^K}} \\[4mm]
        1 &~  \cdots &~~ 1
    \end{pmatrix}
    \\
    &=
    \underbrace{\begin{pmatrix}
    1 &~ 0  &~ \cdots &~ 0 & -\Delta_{j,1}\\
    0 &~ 1  &~ \cdots &~ 0 & -\Delta_{j,2}\\
    \vdots &~ \vdots &~ \ddots &~ 0 & \vdots\\
    0 &~ 0  &~ \cdots &~ 1 &\quad -\Delta_{j,d_j-1}\\
    -1 &~ -1  &~ \cdots & -1 & 1
    \end{pmatrix}}_{d_j\times d_j\text{ matrix, denoted by }\tilde{\bo\Delta}_j}
    \bcdot
    \underbrace{\begin{pmatrix}
    1 &~ 0  &~ \cdots &~ 0 &~ 0\\
    0 &~ 1  &~ \cdots &~ 0 &~ 0\\
    \vdots &~ \vdots &~ \ddots &~ \vdots &~ \vdots \\
    0 &~ 0  &~ \cdots &~ 1 &~ 0\\
    1 &~ 1  &~ \cdots &~ 1 &~ 1
    \end{pmatrix}}_{d_j\times d_j\text{ matrix, denoted by }\mathbf C}
    \bcdot~ \bo\Phi^{(j)}
    =:
    \tilde{\bo\Delta}_j \mathbf C \bo\Phi^{(j)}.
\end{align*}
We can see both $\tilde{\bo\Delta}_j$ and $\mathbf C$ {have full rank $d_j$, so their product $\tilde{\bo\Delta}_j \mathbf C$ also has full rank $d_j$.} Then
\begin{align*}
    \bigodot_{j\in S} \Big(\bo\Phi^{(j)}-\bo\Delta_j\bcdot\one^\top_{2^K} \Big) 
    &= 
    \bigodot_{j\in S} \Big(\tilde{\bo\Delta}_j \mathbf C  \bo\Phi^{(j)} \Big)
    =
    \bigotimes_{j\in S} (\tilde{\bo\Delta}_j \mathbf C ) \bcdot \bigodot_{j\in S}  \bo\Phi^{(j)},
\end{align*}
where the last equality follows from basic properties of the Kronecker and Khatri-Rao products and can be verified by checking corresponding entries in the products. Define
$
\mathbf B\left(\{\bo\Delta_j:\,{j\in S}\}\right) : =
\bigotimes_{j\in S} (\tilde{\bo\Delta}_j \mathbf C ),
$
then $\mathbf B\left(\{\bo\Delta_j:\,{j\in S}\}\right)$ is a {$\prod_{j\in S}d_j \times \prod_{j\in S}d_j$} \emph{invertible} matrix because it is the Kronecker product of $|S|$ invertible matrices $\tilde{\bo\Delta}_j \mathbf C$. This proves  Lemma \ref{lem-poly}.
\end{proof}

Recall that many entries in $\bo\Phi^{(j)}$ are constrained equal under the graphical matrix $\G$. 
Now suppose an alternative graphical matrix $\bar\G \in\{0,1\}^{p\times K}$ and some associated alternative parameters $(\bar\btheta, \bar\nnu)$ lead to the same distribution of $\bo y$ as $(\G, \btheta, \nnu)$. 
Then by \eqref{eq-kreq}, equations $(\bigodot_{j\in S} \bo\Phi^{(j)}) \cdot \nnu =
    (\bigodot_{j\in S} \overline{\bo\Phi}^{(j)}) \cdot \overline\nnu$ must hold for an arbitrary subset $S\subseteq[p]$.
Our goal is to study under what conditions on the true parameters, the alternative $(\bar\G, \bar\btheta, \bar\nnu)$ must be identical to the true $(\G, \btheta, \nnu)$.
By Lemma \ref{lem-poly}, for arbitrary $\{\bo\Delta_j\}$, we have
\begin{align}\notag
    &\Big(\bigodot_{j\in S} \bo\Phi^{(j)} -\bo\Delta_j\bcdot\one^\top_{2^K} \Big) \cdot \nnu 
    = \mathbf B\left(\{\bo\Delta_j:\,{j\in S}\}\right) \bcdot \Big(\bigodot_{j\in S} \bo\Phi^{(j)}\Big) \cdot \nnu 
    \\ \label{eq-trans}
    &= \mathbf B\left(\{\bo\Delta_j:\,{j\in S}\}\right) \bcdot \Big(\bigodot_{j\in S} \overline{\bo\Phi}^{(j)}\Big) \cdot \overline\nnu
    = \Big(\bigodot_{j\in S} \overline{\bo\Phi}^{(j)} -\bo\Delta_j\bcdot\one^\top_{2^K} \Big) \cdot \overline{\nnu}.
\end{align}
We next give a high-level idea of our proof procedure.
Eq.~\eqref{eq-trans} will be frequently invoked for various subsets $S\subseteq[p]$ when deriving the identifiability results.
For example, suppose we want to investigate whether a specific parameter $\theta^{(j)}_{c\mid\aaa_\ell}$ is identifiable under certain conditions.
Exploiting the fact that $\overline\G$ induces many equality constraints on the entries of $\overline{\bo\Phi}^{(j)}$, 
we will construct a set of vectors $\{\bo\Delta_j; j\in S\}$, which usually has the particular $\bar\theta^{(j)}_{c\mid\aaa_\ell}$ as an entry. These vectors $\{\bo\Delta_j; j\in S\}$ are purposefully constructed so that we can use \eqref{eq-trans} and obtain its right hand side equals zero for some polynomial equation.
This implies a polynomial involving parameters $(\G, \btheta, \nnu)$ and the constructed vectors $\{\bo\Delta_j; j\in S\}$ is equal to zero.
We will then carefully inspect under what conditions this equation implies that $\theta^{(j)}_{c\mid\aaa_\ell}$ is identifiable; 
namely, inspect whether $\theta^{(j)}_{c\mid\aaa_\ell} = \overline{\theta}^{(j)}_{c\mid\aaa_\ell}$ holds under the considered conditions.



Essentially, our proof technique exploits the following two key model properties.
\emph{First}, observed variables are conditionally independent given the (potentially multiple) latent variables. This property makes it possible to write the joint distribution of the observed variables as the product of two parts: one being the Khatri-Rao product (i.e., column-wise Kronecker product) of multiple conditional probability tables, and the other being the vector of the probability mass function of latent variables.
\emph{Second}, graphical structures exist between the latent and observed variables. Such graphs can induce many equality constraints on the conditional probability table $\bo\Phi^{(j)}$ of an observed variable given the latent.
The first property above about conditional independence is a prevailing assumption adopted in many other latent variable models. 
The second property above about graph-induced constraints also frequently appear in directed and undirected graphical models \citep{lauritzen1996graphical}.
Therefore, our technique may be useful to find identifiability conditions for other discrete models with multidimensional latent and graphical structures, e.g., discrete Bayesian networks with latent variables with application to causal inference \citep{allman2015dag, mealli2016causal} and 
mixed membership models \citep{erosheva2007aoas}.

{In our proofs of the identifiability results, the number of latent variables $K$ is assumed to be known.
To the author's best knowledge, in all previous studies that leveraged Kruskal's Theorem to establish identifiability, the number of latent variables has always been assumed as known. Compared to  Kruskal's Theorem, our proof technique provides a closer look into the identifiability of individual parameters under graphical constraints. But we still need to assume that the number of parameters is fixed when investigating the solutions to the polynomial equations \eqref{eq-trans}. We expect that to identify $K$, new approaches that look beyond the polynomial equation systems will be needed.
We leave the interesting and nontrivial question of identifying $K$ as a future research direction.}

\subsection{Discussing connections to and differences from related works}



It is worth connecting the BLESS model to discrete Latent Tree Models \citep[LTMs;][]{choi2011learning, mourad2013survey}, which are popular tools in machine learning and have applications in phylogenetics in evolutionary biology. 
Deep results about the geometry and statistical properties of LTMs are uncovered in \cite{zwiernik2012tree}, \cite{zwiernik2016semialg}, and \cite{shiers2016gltm}.
%
Conceptually, the BLESS model is more general than LTMs because in the former, the latent variables can have arbitrary dependencies according to the definition in Eq.~\eqref{eq-model}, \emph{including but not limited to} the case of a latent tree. 
In this sense, directly studying the identifiability and geometry of the BLESS model are more involved than LTMs.
Geometry and identifiability of Bayesian networks with latent variables have also been investigated in
\cite{settimi2000geometry} and \cite{allman2015dag}. 
But these above works often either consider a small number of variables, or employ certain specific assumptions on the dependence of latent variables.
In contrast, {our results imply that 
various possible models for the latent variables can be considered, and our current conditions on $\G$ remain sufficient for identifying the latent variables' probability mass function $\nnu$. In such cases, whether those parameters underlying $\nnu$ in the more specialized model are identifiable can then be studied by assuming $\nnu$ is already identified and known.}

Another interesting work is \cite{stanghellini2013id} that studied the identifiability of discrete undirected graphical models with one latent binary variable. 
\cite{stanghellini2013id}'s conditions are also related to the graphical structure, and they also provide explicit expressions for the non-identifiable subsets of measure zero. 
One key difference between \cite{stanghellini2013id} and this work is that the authors of the former considered local identifiability, {whereas this work studies strict identifiability and generic identifiability, both concerning the entire parameter space instead of a local neighborhood of the parameters and hence are more ``global'' than the notion of local identifiability}. In addition, we establish identifiability for an arbitrary number of binary latent variables instead of only one binary latent variable. \cite{stanghellini2013id}'s approach has the very nice ability to handle the conditional dependence case between the observed variables given the latent ones. Extending our technique to this scenario would be an interesting yet nontrivial future direction.


%
A generic identifiability statement related to our work appeared in \cite{gu2021idq} in the form of a small toy example for the cognitive diagnostic models mentioned earlier. More specifically, these are models where test items are designed to measure the presence/absence of multiple latent skills and binary item responses of correct/wrong answers are observed.
In the special case with two binary latent skills each measured by two binary observed variables, \cite{gu2021idq} proved the parameters are identifiable if and only if the two latent variables are not independent.
In this work, we investigate the fully general case of the BLESS model where there are (a) an arbitrary number of binary latent variables, (b) arbitrary dependence between these variables, and (c) the observed variables have an arbitrary number of categories. In this general setting, we characterize a complete picture of the generic identifiability phenomenon with respect to the latent dependence in Section \ref{sec-mainsub}.


\section{{Extensions to more complicated models}}
\label{sec-extend}

\subsection{Extension to the BLESS model with higher-order latent structures}\label{subsec-pyramid}
Studying the BLESS model provides useful theoretical insight, but admittedly, having to estimate an unrestricted distribution with $2^K-1$ parameters in $\nnu$ for $K$ binary latent variables would require too much data.
Fortunately, our technique and theory can be readily extended to more flexible models for the latent part -- for instance, when the latent variables follow a more parsimonious distribution induced by deeper latent structures.
In this subsection, we provide an illustrative example of such an extension.
Consider a two-latent-layer Bayesian Pyramid model proposed by \cite{gu2023bp}, which is a Bayesian network with two discrete latent layers; see Figure \ref{fig-2layer}. The shallower latent layer consists of binary latent variables $\bo a$ just as in our BLESS model, while the deeper latent layer only contains one discrete latent class variable $z \in [B]$. 
In this model, the vector $\bo a$ follows a classical latent class model \citep{goodman1974} with $B$ latent classes with the following parametrization:
\begin{align*}
    \mathbb P(\bo a = \aaa) &= 
    \sum_{b=1}^B \mathbb P(z = b) \prod_{k=1}^K \mathbb P(a_{k}=\alpha_k\mid z=b)
    =
    \sum_{b=1}^B \tau_b \prod_{k=1}^{K} \eta_{k,b}^{\alpha_k} (1-\eta_{k,b})^{1-\alpha_k},\quad 
    \forall\aaa\in\{0,1\}^{K};
\\
    \mathbb P(\bo y = \bo c) &=  \sum_{\aaa\in\{0,1\}^{K}} \mathbb P(\bo a = \aaa) \prod_{j=1}^p \mathbb P(y_{j}=c_j \mid \bo a = \aaa,\; \mathbf G),\quad \forall \bo c\in \times_{j=1}^p [d].
\end{align*}
\cite{gu2023bp} used an argument similar to \cite{allman2009} to establish identifiability of the above Bayesian Pyramid. Their sufficient condition for generic identifiability requires each binary latent $a_k$ to have \emph{at least three} pure children and that $K\geq 2\ceil{\log_2(B)} + 1$. 
In contrast, using our new technique, we are able to obtain a (much) weaker identifiability condition -- each binary latent $a_k$ only needs to have \emph{two pure children} because of the blessing of dependence between $a_1,\ldots,a_K$ implied by the deeper latent $z$. The following Proposition \ref{prop-bp} formalizes this statement.

\begin{figure}[h!]\centering
\resizebox{0.63\textwidth}{!}{
    \begin{tikzpicture}[scale=1.8]

    \node (v1)[neuron] at (0, 0) {$y_{1}$};
    \node (v2)[neuron] at (0.8, 0) {$y_2$};
    \node (v3)[neuron] at (1.6, 0) {$\cdots$};
    \node (v4)[neuron] at (2.4, 0) {$\cdots$};
    \node (v5)[neuron] at (3.2, 0) {$y_{2K-1}$};
    \node (v6)[neuron] at (4, 0)   {$y_{2K}$};

    \node (h1)[hidden] at (1.0, 1.2) {$a_{1}$};
    \node (h2)[hidden] at (2.0, 1.2) {$\cdots$};
    \node (h3)[hidden] at (3.0, 1.2) {$a_{K}$};
    
    \node (h0)[hidden] at (2, 2.4) {$z$};

    \node[anchor=west] (z) at (4.8, 2.4) {$z\in [B]$};
    
    \node[anchor=west] (eta) at (4.8, 1.8) {$\bo\eta = (\eta_{k,b}) $};
    
    \node[anchor=west] (h) at (4.8, 1.2) {$\bo a \in \{0, 1\}^{K}$};
   
    \node[anchor=west] (g1) at (4.8, 0.6) {$\G = (g_{j,k})$};
    
    \node[anchor=west] (v) at (4.8, 0) {$\bo y \in [d]^{2K}$};

    \draw[qedge] (h0) -- (h1) node [midway,above=-0.12cm,sloped] {\textcolor{black}{$\eta_{1,b}$}}; 
    \draw[qedge] (h0) -- (h2);
    \draw[qedge] (h0) -- (h3) node [midway,above=-0.12cm,sloped] {\textcolor{black}{$\eta_{K_1,b}$}};

    \draw[qedge] (h1) -- (v1) node [midway,above=-0.12cm,sloped] {}; 
    
    \draw[qedge] (h1) -- (v2) node [midway,above=-0.12cm,sloped] {};  
    
    \draw[qedge] (h2) -- (v3) node [midway,above=-0.12cm,sloped] {}; 
    
    \draw[qedge] (h2) -- (v4) node [midway,above=-0.12cm,sloped] {}; 
    
    
    
    
    \draw[qedge] (h3) -- (v5) node [midway,above=-0.12cm,sloped] {}; 

    \draw[qedge] (h3) -- (v6) node [midway,above=-0.12cm,sloped] {}; 
    
\end{tikzpicture}
}
\caption{Two-latent-layer Bayesian Pyramid model in \cite{gu2023bp}. Here the $\bo a$-layer-to-$\bo y$-layer measurement graph is a star tree, where each $a_k$ has exactly two children $y_{2k-1}$ and $y_{2k}$. 
}
\label{fig-2layer}
\end{figure}
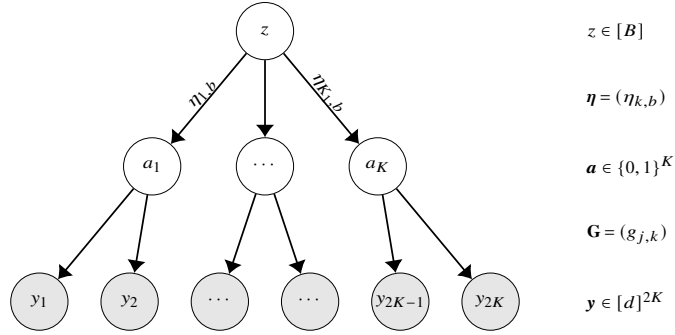

\begin{proposition}\label{prop-bp}
Consider the two-latent-layer model in Figure \ref{fig-2layer} where $\bo y\mid \aaa$ follows a star-forest graphical model and $\aaa\mid z$ follows a classical latent class model.
If each binary latent variable has two pure children and that $K \geq 2 \ceil{\log_2(B)} + 1$, then the model parameters are generically identifiable.
\end{proposition}

Proposition \ref{prop-bp} can be proved as a corollary of our main result.
Thanks to the existence of the deeper latent class variable underlying the binary latent variables $\bo a$, the following inequality holds generically for any vector $(\alpha_k,\alpha_m)\in\{0,1\}^2$:
\begin{align*}
&~\mathbb P(a_{k} = \alpha_k,\; a_{m} = \alpha_m) - \mathbb P(a_{k} = \alpha_k) \mathbb P(a_{m} = \alpha_m)\\
=&~ \sum_{b=1}^B \tau_b \eta_{k,b}^{\alpha_k} (1-\eta_{k,b})^{1-\alpha_k} \eta_{m,b}^{\alpha_m} (1-\eta_{m,b})^{1-\alpha_m} - \prod_{\ell\in\{k,m\}} \left[\sum_{b=1}^B \tau_b \eta_{\ell,b}^{\alpha_\ell} (1-\eta_{\ell,b})^{1-\alpha_\ell}\right] \neq 0.
\end{align*}
This inequality means for generic model parameters in the two-latent-layer Bayesian Pyramid, {$a_k$ is not independent of $a_m$}, hence allowing for the blessing-of-dependence to kick in to deliver identifiability.
Combining this observation with the proof of Proposition 3 in \cite{gu2023bp} that shows generic identifiability of $(\eta_{k,b})$ and $(\tau_b)$ under $K\geq 2\ceil{\log_2(B)}+1$, we obtain the much weaker identifiability condition in Proposition \ref{prop-bp}.

\subsection{Extension to a model with an arbitrary measurement graph $\G$}
\label{subsec-catdina}

In this subsection, we pursue a more challenging extension by studying a more complicated model in which $\G$ can be an arbitrary binary matrix. In other words, in this model each observed variable is not restricted to having only one latent parent as in the BLESS model.
Next, we first formally define this model, and then prove generic identifiability and reveal the blessing-of-dependence for it.
We need to introduce some additional notation.
For two vectors $\bo a=(a_1,\ldots,a_L)$ and $\bo b=(b_1,\ldots,b_L)$ of the same length $L$, we write $\bo a\succeq \bo b$ if $a_\ell\geq b_\ell$ for all $\ell\in[L]$; that is, when vector $\bo a$ is elementwisely greater than or equal to vector $\bo b$.
If $a_\ell<b_\ell$ holds for some $\ell\in[L]$, then we write $\bo a\nsucceq \bo b$.

We consider an extension of a popular psychometric model -- the so-called Deterministic Input Noisy output ``And'' gate model \citep[DINA model;][]{junker2001dina} motivated by educational cognitive diagnosis. 
The DINA model is usually used for modeling multivariate binary responses in an educational test setting. In this setting, each subject is a student test taker with the binary observed variables $\bo y$ denoting the student's correct or wrong responses to $p$ test questions, and the binary latent variables $\bo a$ encoding the student's profile of the presence or absence of $K$ skills. 
The DINA model is associated with a so-called $\Q$-matrix \citep{tatsuoka1983} that describes which skills are required/measured by each test question. Essentially, this $\Q$-matrix is equivalent to the measurement graph matrix $\G$ in our notation.
The DINA model does not restrict each test question to depend on only one latent skill, which means $\G$ can be an arbitrary binary matrix.
For $j\in[p]$, recall that $\bo g_j = (g_{j,1}, \ldots, g_{j,K}) \in\{0,1\}^K$ denotes the $j$th row vector of matrix $\G$ and it describes which skills are required by question $j$, with $g_{j,k}=1$ if skill $k$ is required and $g_{j,k}=0$ if not. 
If a student's latent skill profile $\bo a$ satisfies $\bo a \succeq \bo g_j$, then the student masters all required skills of question $j$; if $\bo a \nsucceq \bo g_j$, then the student lacks some required skills of it.
In the binary-response DINA model, the probability of providing a correct response to question $j$ for a student with latent skill profile $\bo a$ is:
\begin{align}\label{eq-bindina}
    \mathbb P^{\text{BinaryDINA}}(y_j = 1\mid \bo a) = 
    \begin{cases}
        1 - s_j, & \text{ if }\bo a\succeq \bo g_j;\\
        u_j, & \text{ if }\bo a\nsucceq \bo g_j,
    \end{cases}
\end{align}
where $s_j$ and $u_j$ have the following interpretation. Parameter $s_j = 1-\mathbb P(y_j = 1\mid \bo a \succeq \bo g_j)$ represents the probability of slipping the correct answer of question $j$ despite that the student possesses all the required skills of it (sometimes called ``capable'' of question $j$). Parameter $u_j = \mathbb P(y_j = 1\mid \bo a \nsucceq \bo g_j)$ represents the probability of correctly guessing the answer despite that the student lacks some of the required skills (``incapable'' of question $j$). Many previous studies assumed that $1-s_j> u_j$ \citep[e.g.,][]{culpepper2015dina, gu2019dina}, meaning that capable students of a question has a higher probability of answering it correctly than incapable students.

We can extend the binary-response DINA model to the case of general categorical responses, to be consistent with the response type in the BLESS model in Section \ref{sec-setup}.
Next, we formally define the categorical-response DINA model, abbreviated as CatDINA, where each observed variable $y_j$ ranges in $d$ categories for some integer $d\geq 2$.
Such an extended model could be used to model partial credits in educational tests.
For $j\in[p]$ and $c_j\in[d]$, define the conditional response probability as:
\begin{align}\label{eq-catdina}
    \mathbb P^{\text{CatDINA}}(y_j = c_j\mid \bo a) = 
    \begin{cases}
        \theta^{(j)}_{c_j\mid 1}, & \text{ if }\bo a\succeq \bo g_j;\\[3mm]
        \theta^{(j)}_{c_j\mid 0}, & \text{ if }\bo a\nsucceq \bo g_j.
    \end{cases}
\end{align}
The CatDINA model has the same number of $\theta$-parameters as the BLESS model defined in Section \ref{sec-setup}, but it allows the matrix $\G$ to take an arbitrary form rather than having only standard basis row vectors. 
The CatDINA model (and the original binary-response DINA model) assumes a \emph{conjunctive} relationship of latent variables, by grouping the latent patterns $\bo a \in \{0,1\}^K$ into two classes for each $j$: the capable class ($\bo a\succeq\bo g_j$) and the incapable class ($\bo a\nsucceq\bo g_j$). Therefore, fixing some $j\in[p]$ and $c_j\in[d]$, as defined in \eqref{eq-catdina}, the conditional response probabilities can only take two different values depending on whether $\bo a \succeq \bo g_j$.

For the binary-response DINA model in \eqref{eq-bindina},
\cite{gu2019dina} proved that the following three conditions (C), (R), and (D) are necessary and sufficient for strict identifiability when $\G$ is known:
\begin{itemize} 
\item[(C)] {\textbf{C}ompleteness}.
A $\G$-matrix with $K$ columns contains an identity submatrix $\I_K$ after some row permutation. Namely, the $\G$ can be row-permuted to take the form of
$\G=(\I_K; \G^{*\top})^\top$.

\item[(R)] {\textbf{R}epeated-Measurement.}
Each column of $\G$ contains at least three entries of ``1''s.

\item[(D)] {\textbf{D}istinctness.}
Assuming Condition (C) holds, after removing the identity submatrix $\I_K$ from $\G$, the remaining $(p-K)\times K$ submatrix $\G^{*}$ has $K$ mutually different column vectors.
\end{itemize}

We call the above three conditions the C-R-D conditions for short. For example, one can directly verify that the following $6\times 3$ matrix satisfies the C-R-D conditions:
\begin{equation}\label{eq-g63}
\G=\begin{pmatrix}
    1 & 0 & 0 \\
    0 & 1 & 0 \\
    0 & 0 & 1 \\
    \hline
    1 & 1 & 0 \\
    1 & 0 & 1 \\
    0 & 1 & 1
\end{pmatrix}.
\end{equation}
The C-R-D conditions can also be rephrased in graphical language as follows:
\begin{itemize} 
\item[(C)] 
Each latent variable has at least one observed variable as a ``pure child'', which has exactly one latent variable as its parent.

\noindent (Another equivalent way of stating condition (C) in graph theory terminology is: the bipartite graph has a ``perfect matching'' between the latent and the observed layer.)

\item[(R)] 
Each latent variable has at least three observed variables as children (not necessarily all pure children). 

\item[(D)] 
Assuming Condition (C) holds, after removing the $K$ edges in the perfect matching from the bipartite graph, the remaining graph satisfies that the $K$ latent variables' sets of children variables are mutually distinct.
\end{itemize}

It is not hard to see the equivalence and one-to-one correspondence between the above graphical-language C-R-D conditions and the previous algebraic-language C-R-D conditions.  As a concrete example, we can still consider the $6\times 3$ matrix $\G$ in \eqref{eq-g63}.
The corresponding children sets of the three latent variables are: $\ch(a_1) = \{y_1, y_4, y_5\}$, $\ch(a_2) = \{y_2, y_4, y_6\}$, and $\ch(a_3) = \{y_3, y_5, y_6\}$. In this case, condition (C) is satisfied because there exists a perfect matching with these three edges: $a_1 \to y_1$, $a_2 \to y_2$, and $a_3 \to y_3$; condition (R) is satisfied because $|\ch(a_1)| \geq 3$, $|\ch(a_2)| \geq 3$, and $|\ch(a_3)| \geq 3$; condition (D) is also satisfied because after removing those three edges in the perfect matching in condition (C), in the remaining graph, the children sets of $a_1, a_2,$ and $a_3$ are $\{y_4, y_5\}$, $\{y_4, y_6\}$, and $\{y_5, y_6\}$ respectively, which are three mutually distinct sets.

Next, we first prove that the C-R-D conditions are still sufficient for strict identifiability of the CatDINA model, and then further relax these conditions to establish generic identifiability and reveal a  blessing-of-dependence phenomenon under the CatDINA model.

\begin{proposition}[Strict identifiability of the CatDINA model]\label{thm-cdina-str}
Consider the CatDINA model with parameters $(\bo\theta,\nnu)$ satisfying assumptions \eqref{eq-nu-positive} and \eqref{eq-flip} (the same assumptions as the BLESS model defined in Section \ref{sec-setup}). Assume the $\G$ matrix is known.
Then the C-R-D conditions are sufficient for strict identifiability of parameters in the CatDINA model.
\end{proposition}

\begin{theorem}[Generic identifiability and blessing of dependence in the CatDINA model]\label{thm-cdina-gen}
\label{thm-rep} 
Consider the CatDINA model with parameters $(\bo\theta,\nnu)$ satisfying the same assumptions \eqref{eq-nu-positive} and \eqref{eq-flip} as the BLESS model.
Suppose the $\G$ matrix satisfies Condition (C) but does not satisfy Condition (R) in that $\sum_{j=1}^p g_{j,k}=2$ for some $k\in[K]$. 
In this case, the $\G$ matrix can be written in the following form after some column/row permutation, where $\G^*$ is a $(p-2)\times(K-1)$ submatrix and $\bo u$ is a $(K-1)\times 1$ vector.
\begin{align}\label{eq-q1u}
\G = 
\begin{pmatrix}
1 & \zero \\
1 & \bo u \\
\hline
\zero & \G^*
\end{pmatrix}
\end{align}
\begin{itemize}
    \item[(a)] If the submatrix $\G^*$ satisfies the C-R-D conditions and $\bo u\neq \one_{K-1}^\top$, then the parameters $(\bo\theta,\nnu)$ in the CatDINA model are generically identifiable. 
    \item[(b)] Under the condition in part (a), the measure-zero  non-identifiable set $\mathcal N$ in the parameter space is characterized by
\begin{align}\notag
\mathcal N= &~\{
\nnu\text{ satisfies }
    \nu_{(1,\aaa_1^*)} \nu_{(0,\aaa_2^*)} - \nu_{(0,\aaa_1^*)}  \nu_{(1,\aaa_2^*)} = 0
    ~~\forall 
    \aaa_1^*,\, \aaa_2^* \succeq\uu.\}\\ 
\label{eq-nset-r1}
=&~
\{\nnu\text{ satisfies } (a_1 \indep \bo a_{2:K} \mid \bo a_{2:K}\succeq\uu).\},
\end{align}
where ``$a_1 \indep \bo a_{2:K} \mid \bo a_{2:K}\succeq\uu$'' reads as: latent variables $a_1$ and $\bo a_{2:K}$ are conditionally independent given that $\bo a_{2:K}\succeq\uu$.
\end{itemize}
\end{theorem}

Theorem \ref{thm-cdina-gen} establishes generic identifiability of the CatDINA model by considering a particular violation of the strict identifiability conditions: some latent variable has only two observed children instead of three ones.
Such a consideration is inspired by the identifiability conclusions for the BLESS model in Theorem \ref{thm-main} and Proposition \ref{prop-3chi}, because having two or three children per latent variable is exactly the difference between generic and strict  identifiability under the BLESS model.
The proof of Theorem \ref{thm-cdina-gen} is more nuanced than Theorem \ref{thm-main}, because the CatDINA model has more flexible parent-child relationships between the latent and observed variables than the BLESS model.

Theorem \ref{thm-cdina-gen}(b) shows that the non-identifiable set $\mathcal N$ is characterized by the zero-set of certain polynomials only involving the parameters $\nnu=(\nu_{\aaa}:\;\aaa\in\{0,1\}^K)$ but not the $\bo\theta$-parameters. 
In the proof of Theorem \ref{thm-cdina-gen}, we first show that if the true 
$\nnu$-parameters do not satisfy $\nu_{(1,\aaa_1^*)} \nu_{(0,\aaa_2^*)} - \nu_{(0,\aaa_1^*)}  \nu_{(1,\aaa_2^*)} = 0$ for all $\aaa_1^*\neq \aaa_2^*$ with $\aaa_1^*,\, \aaa_2^* \succeq\uu$, then both $\nnu$ and $\bo\theta$ are identifiable.
Then based on such defining polynomial equations of the non-identifiable set $\mathcal N$, we further derive its equivalent interpretation of conditional independence ``$a_1 \indep \bo a_{2:K} \mid \bo a_{2:K}\succeq\uu$'' (see the proof of Theorem \ref{thm-cdina-gen} for details).

{Proving Theorem \ref{thm-cdina-gen} for the CatDINA model with an arbitrary measurement graph is {technically nontrivial and more difficult} than proving the result for the BLESS model.
It is worth emphasizing that our high-level proof technique is not restricted to models in which the conditional response
probabilities $\mathbb P(y_j=c_j\mid \bo a)$ only take two different values as in the BLESS model and the CatDINA model in \eqref{eq-catdina}. In fact, as explained earlier in Section \ref{sec-overview}, this technique essentially exploits the following two properties of a model to show identifiability:
the conditional independence of the observed variables given the latent ones, and the
graphical structure between the observed and latent variables. None of these properties rely on the assumption that the conditional response probabilities can only take two different values.
Therefore, we believe the following would be a fair statement: 
The high-level proof technique could be applied to other models based on our technical insight into its fundamental characteristics, but such extensions will not be straightforward and will indeed take significant technical effort for any specific model --
just as extending the result from the BLESS model to the CatDINA model does. Extensions of this kind to more complex models may be worth pursuing in the future.}

\section{Statistical hypothesis test of identifiability in the boundary case}
\label{sec-test}
Consider the minimal conditions for generic identifiability of the BLESS model, where certain  latent variables have only two children. 
In this case, the blessing of dependence provides a basis for performing a statistical hypothesis test of identifiability.
We have the following proposition.

\begin{proposition}\label{prop-depdep}
Under the BLESS model defined in \eqref{eq-model}, consider two different latent variables $\alpha_{k_1}$ and $\alpha_{k_2}$. 
The two groups of observed variables 
$\{y_j=c_j:\; g_{j,k_1}=1\}$ and  $\{y_m=c_m:\; g_{m,k_2}=1\}$ 
are independent if and only if $a_{k_1}$ and $a_{k_2}$ are independent.
\end{proposition}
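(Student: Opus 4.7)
The plan is to reduce the statement to a rank-one factorization of the joint contingency matrix of the two groups, then use the monotonicity condition \eqref{eq-flip} to read off independence of the latents from independence of the observed groups. Let $S_1=\{j:\,g_{j,k_1}=1\}$ and $S_2=\{m:\,g_{m,k_2}=1\}$, which are disjoint by the star-forest property, and write $\yy_{S_1}=(y_j)_{j\in S_1}$ and $\yy_{S_2}=(y_m)_{m\in S_2}$. Marginalizing out every latent variable other than $\alpha_{k_1}$ and $\alpha_{k_2}$ in \eqref{eq-model}, using local independence and the fact that $\yy_{S_1}$ depends only on $\alpha_{k_1}$ and $\yy_{S_2}$ only on $\alpha_{k_2}$, I get
\begin{equation*}
\mathbb P(\yy_{S_1}=\bo c_1,\,\yy_{S_2}=\bo c_2)=\sum_{a,b\in\{0,1\}} p_{ab}\,f_a(\bo c_1)\,g_b(\bo c_2),
\end{equation*}
where $p_{ab}=\mathbb P(\alpha_{k_1}=a,\alpha_{k_2}=b)$, $f_a(\bo c_1)=\prod_{j\in S_1}\theta^{(j)}_{c_j\mid a}$, and $g_b(\bo c_2)=\prod_{m\in S_2}\theta^{(m)}_{c_m\mid b}$. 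Equivalently, the joint probability table $\mathbf M$ of $(\yy_{S_1},\yy_{S_2})$, viewed as a $d^{|S_1|}\times d^{|S_2|}$ matrix, admits the factorization $\mathbf M=\mathbf F\,\mathbf P\,\mathbf G^\top$, where $\mathbf F$ and $\mathbf G$ have columns $f_0,f_1$ and $g_0,g_1$ respectively, and $\mathbf P=(p_{ab})$ is the $2\times 2$ joint proportion matrix of $(\alpha_{k_1},\alpha_{k_2})$.

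For the easy direction, assuming $\alpha_{k_1}\indep\alpha_{k_2}$ and substituting $p_{ab}=p_{a\bcdot}\,p_{\bcdot b}$ into the display above factors the double sum as the product of the two marginal distributions of $\yy_{S_1}$ and $\yy_{S_2}$. For the harder direction, I would observe that $\yy_{S_1}\indep\yy_{S_2}$ forces $\mathbf M$ to have rank at most one. Since rank is preserved under left/right multiplication by full-column-rank factors, the strategy is to argue that (i) both $\mathbf F$ and $\mathbf G$ have \rank\ $2$ under \eqref{eq-flip}, hence $\rank(\mathbf M)=\rank(\mathbf P)$; and (ii) a rank-one $2\times 2$ matrix $\mathbf P$ with positive entries satisfies $p_{00}p_{11}-p_{01}p_{10}=0$, which is exactly $\alpha_{k_1}\indep\alpha_{k_2}$. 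Note the entries of $\mathbf P$ are all strictly positive under our standing assumption $\nu_{\aaa}>0$, so no degenerate case needs to be handled here.

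For step (i), if $f_0$ and $f_1$ were proportional, say $f_1=\rho f_0$, then $\prod_{j\in S_1}\theta^{(j)}_{c_j\mid 1}=\rho\prod_{j\in S_1}\theta^{(j)}_{c_j\mid 0}$ for every $\bo c_1$. Varying one coordinate at a time forces, for each $j\in S_1$, the ratios $\theta^{(j)}_{c\mid 1}/\theta^{(j)}_{c\mid 0}$ to be constant in $c$; summing both numerator and denominator over $c\in[d]$ and using that the columns of each $\bo\Phi^{(j)}$ are probability vectors forces that constant to equal $1$, contradicting \eqref{eq-flip}. The same argument applied to $S_2$ gives $\rank(\mathbf G)=2$.

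The main obstacle, such as it is, lies in step (i): making the factor matrices $\mathbf F$ and $\mathbf G$ nondegenerate in exactly the right sense so that $\rank(\mathbf M)$ can be read off from $\rank(\mathbf P)$. The monotonicity assumption \eqref{eq-flip} is the precise ingredient that makes the two conditional distributions $\mathbb P(\yy_{S_1}\mid\alpha_{k_1}=0)$ and $\mathbb P(\yy_{S_1}\mid\alpha_{k_1}=1)$ linearly independent; without it one could have $\theta^{(j)}_{\bcdot\mid 0}=\theta^{(j)}_{\bcdot\mid 1}$ and the rank-transfer step collapses, though in that degenerate case the latent variable in question is essentially unused and the proposition becomes vacuous.
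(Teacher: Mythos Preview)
Your proof is correct, and in fact handles the harder direction more carefully than the paper does. Both you and the paper begin by marginalizing to the $2\times 2$ latent marginal $\mathbf P=(p_{ab})$ and writing the joint table of $(\yy_{S_1},\yy_{S_2})$ in the bilinear form $\sum_{a,b}p_{ab}f_a(\bo c_1)g_b(\bo c_2)$; and both dispatch the easy direction $(\alpha_{k_1}\indep\alpha_{k_2})\Rightarrow(\yy_{S_1}\indep\yy_{S_2})$ by factoring the sum. The difference is in the converse. The paper records the identity
\[
\mathbb P(\yy_{S_1},\yy_{S_2})-\mathbb P(\yy_{S_1})\mathbb P(\yy_{S_2})=\sum_{a,b}\bigl(\tilde\nu_{ab}-\tilde\nu_a\tilde\nu_b\bigr)\,f_a(\bo c_1)g_b(\bo c_2)
\]
and then argues that if the left side is nonzero for some pattern, some coefficient $\tilde\nu_{ab}-\tilde\nu_a\tilde\nu_b$ must be nonzero; but that is the contrapositive of the easy direction again, not the converse. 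What is actually needed for the converse is that the four functions $f_a\otimes g_b$ be linearly independent, i.e.\ that $\mathbf F$ and $\mathbf G$ each have full column rank $2$. Your rank-transfer argument $\rank(\mathbf M)=\rank(\mathbf F\mathbf P\mathbf G^\top)=\rank(\mathbf P)$ supplies exactly this, and your verification via \eqref{eq-flip} that $f_0,f_1$ (and $g_0,g_1$) cannot be proportional is the right ingredient. So your route is not only different but also plugs a small gap: it is the linear-algebraic formulation $\mathbf M=\mathbf F\mathbf P\mathbf G^\top$ plus full-column-rank of the factor matrices that cleanly yields both directions at once.
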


Proposition \ref{prop-depdep} states that under the BLESS model, the dependence/independence of latent variables is exactly reflected in the dependence/independence of their observed proxies (i.e., observed children variables). This fact is apparent from the graphical representation of the BLESS model in Figure \ref{fig-graph}.
A nice implication of Theorem \ref{thm-main} and Proposition \ref{prop-depdep} is that,
we can test the marginal dependence between certain observed variables to determine model identifiability, before even trying to fit a potentially unidentifiable model to data.

Formally,  under minimal conditions for generic identifiability where some latent variable $\alpha_k$ only has two observed children, if one wishes to test the following hypothesis
$$
H_{0k}:~ \text{Parameters associated with }~ \ch(a_k)\mid a_k ~\text{ are not identifiable},
$$
then it is equivalent to testing the hypothesis $H_{0k}':~ a_k \indep \bo a_{-k}$. 
Further, to test $H_{0k}'$ it suffices to test the marginal independence between the following observed variables,
$$
H_{0k}':~ \ch(a_k) \indep \ch(\bo a_{-k}).
$$
Since $\ch(a_k)$ and $\ch(\bo a_{-k})$ are fully observed given the measurement graph, the above hypothesis $H_{0k}'$ can be easily tested.
Note that $\ch(a_k)$  can be regarded as a categorical variable with $d^{|\ch(a_k)|}$ categories and that $\ch(\bo a_{-k})$  can be regarded as another categorical variable with $d^{|\ch(\bo a_{-k})|}$ categories. So the simple $\chi^2$ test of independence between two categorical variables can be employed for testing $H_{0k}'$.
If the null hypothesis of independence is not rejected, then caution is needed in applying the BLESS model because some parameters may not be identifiable.
If, however, the hypothesis of independence is rejected, then this is statistical evidence supporting the identifiability of the BLESS model. In this case one can go on to fit the model to data, interpret the estimated parameters, and conduct further statistical analysis.

Since our hypothesis test of identifiability can be performed without fitting the BLESS model, it can serve as a first-step sanity check in real data analysis.
In a similar spirit but for a different purpose when studying the Gaussian Latent Tree Models, \cite{shiers2016gltm} proposed to test certain covariance structures of variables to determine the goodness of fit before fitting the model to data. 
To the author's best knowledge, there has not been previous formal approaches to directly {testing the identifiability} of multidimensional latent variable models. 
Our test is enabled by the discovery of the blessing of dependence phenomenon and may inspire future relevant hypothesis testing approaches in other latent variable models.

\section{A real-world example of hypothesis testing of identifiability}\label{sec-prac}

We present a real-world example in educational assessments.
The Trends in International Mathematics and Science Study (TIMSS) is a series of international assessments of the mathematics and science knowledge of fourth and eighth grade students.
TIMSS has been held every four years since 1995 in over 50 countries.
Researchers have used the cognitive diagnostic model to analyze the Austrian TIMSS 2011 data \citep{george2015cdm}, which are available in the R package \texttt{CDM}.
The dataset involves fourth grade students' correct/wrong responses to a set of TIMSS questions in mathematics.
According to educational experts, these questions were designed to measure the presence/absence statuses of $K=3$ latent skills of students: ($a_1$) Data,  ($a_2$) Geometry, and  ($a_3$) Numbers. 
Each question targets exactly one skill, which means the latent-to-observed measurement graph satisfies the assumption of the BLESS model.
In this Austrian TIMSS dataset, we focus on the first booklet containing the first $p = 21$ questions, and consider the $N=341$ students who answered all these questions. 
Table \ref{tab-timss} summarizes how these 21 questions depend on the three latent skills, i.e., what the $\G$ matrix is.

\begin{table}[h!]
    \centering
    \caption{TIMSS example. Latent-to-observed measurement graph (i.e., $\G$ matrix) between the first $p=21$ questions and $K=3$ latent skills, obtained from the R package \texttt{CDM}.}
    \label{tab-timss}

    \resizebox{0.61\linewidth}{!}{
    \begin{tabular}{lll}
    \toprule
    & Latent skill  &  Indices of questions that measure the skill\\
    \midrule
      $a_1$ & Data  & 20, 21 \\
      $a_2$ & Geometry  & 7, 8, 16, 17, 18, 19 \\
      $a_3$ & Numbers  & 1, 2, 3, 4, 5, 6, 9, 10, 11, 12, 13, 14, 15 \\
    \bottomrule
    \end{tabular}
    }
    
\end{table}

Table \ref{tab-timss} shows that the first skill ``Data'' is measured by only two questions (questions 20 and 21), hence satisfying the minimal conditions for generic identifiability.
So according to our new results, whether the model parameters are identifiable would depend on whether there exists underlying dependence between $a_1$ and $(a_2, a_3)$.
We carry out a hypothesis test of identifiability of the BLESS model. 
In particular, consider the null hypothesis
\begin{align*}
H_{0,\text{Data}}:~ &\text{Skill ``Data'' is independent of skills ``Geometry'' and ``Numbers''};
\end{align*}
based on the $\G$ matrix structure in Table \ref{tab-timss}, we can test whether the questions measuring the ``Data'' skill are independent with  those measuring the other two skills.
In particular, here we consider all the two-question-combinations consisting of one  measuring ``Geometry'' and one measuring ``Numbers'', and then test whether this combination of questions are independent of those two ``Data'' questions; namely, we test
\begin{align*}
H_{0,\text{Data}}^{j_1, j_2}:~ (y_{20}, y_{21}) \text{~are independent of~} (y_{j_1}, y_{j_2}),~
j_1 \text{~measures Geometry,~} j_2 \text{~measures Numbers}.
\end{align*}
Using the standard $\chi^2$ test of independence between two categorical variables each with $2^{2}=4$ categories, each test statistic under the null hypothesis $H_{0,\text{Data}}^{j_1, j_2}$ asymptotically follows the $\chi^2$ distribution with $df = (2^2 - 1) \cdot (2^2 - 1) = 9$ degrees of freedom. 
Out of the $6\times 13 = 78$ such test statistics, we found 73 of them are greater than the 95\% quantile of the reference distribution $\chi^2(df, 0.95) = 16.92$, where we reject the null hypothesis of independence between $(y_{20}, y_{21})$ and $(y_{j_1}, y_{j_2})$. We point out that the rejection of any of these tests $H_{0,\text{Data}}^{j_1, j_2}$ already indicates one should reject the original null $H_{0,\text{Data}}$.  
Thanks to the blessing of dependence theory, the test results provide statistical evidence to reject the original null hypothesis of non-identifiability, and hence support the identifiability of model parameters.
This provides a statistical conclusion of identifiability for the first time in such applications. 
We also provide another example about a social science survey in the Supplementary Material.

\section{Discussion}\label{sec-disc}
This work reveals a blessing-of-latent-dependence geometry for the BLESS model and its extensions, which are discrete models with multiple binary latent variables.
For the BLESS model, we show that under the minimal conditions for generic identifiability that each latent variable has exactly two observed children, the model parameters are identifiable if and only if there exists dependence between the latent variables.
{In addition, we have successfully established similar conclusions for the more complicated CatDINA model, which has a more flexible measurement graph beyond a star tree.}
In statistical modeling, the independence assumption on latent variables is predominantly adopted; e.g., in traditional factor analysis, latent factors are often assumed to be independent with a diagonal covariance matrix \citep{anderson1956fa}.
In practice, however, especially in confirmatory latent variable analysis widely seen in education, psychology, and epidemiology, latent constructs of interest often carry substantive meanings; see the real-data example in Section \ref{sec-prac}. 
As a result, it is highly likely that such latent constructs postulated by domain experts are dependent on each other, such as the presence/absence of depression and anxiety disorders in psychiatry, or the existence/non-existence of multiple pathogens in epidemiology.
From this perspective, our theoretical result provides reassurance that the dependence of latent variables can be a blessing, rather than a curse.

{We have demonstrated in Section \ref{subsec-catdina} that our proof technique can be used to study a general measurement graph between the categorical observed variables and binary latent variables.
But we find it not straightforward to extend the proof technique to models in which the latent variables are polytomous; i.e., categorical latent variable with more than two categories.
The reason is that the algebraic characterization of independence between binary variables is 
much more manageable than that for polytomous variables.
Specifically, the statement that $a_1\in\{0,1\}$ is independent with $\bo a_{2:K}\in\{0,1\}^{K-1}$ is equivalent to that the $2\times 2^{K-1}$ joint probability table of $a_1$ and $\bo a_{2:K}$ has rank one. This rank-one constraint is further equivalent to the simultaneous vanishing of ${2^{K-1}\choose 2}$ degree-2 homogeneous polynomials of the proportion parameters $\nu_{\aaa}$ for $\aaa\in\{0,1\}^K$ (see the proofs of Theorems \ref{thm-main} and \ref{thm-cdina-gen} for details). In our proof of the blessing of dependence, we are able to algebraically characterize the measure-zero non-identifiable set $\mathcal N$, and further reveal that $\mathcal N$ exactly corresponds to the zero set of the aforementioned degree-2 homogeneous polynomials. 
However, for polytomous variables $a_1$ and $\bo a_{2:K}$ each with $C>2$ categories, the independence between $a_1$ and $\bo a_{2:K}$ corresponds to the vanishing of $2\times 2$ sub-determinants of a much larger $C \times C^{K-1}$ joint probability table, which involves many more polynomial equations. As a result, it is more difficult in this case to examine the relationship between such polynomials and the non-identifiable set, and even difficult to characterize the non-identifiable set itself. 
On a related note, \cite{zwiernik2012tree} made a similar remark when studying the identifiability of latent tree models (LTMs), which could be viewed as a special case of our considered models. \cite{zwiernik2012tree} characterized the measure-zero non-identifiability set under LTMs and pointed out that extending the conclusion beyond the binary latent variable case is difficult.
Nonetheless, we would like to remark that multidimensional binary latent variable models are ubiquitous both in real-world applications (such as various cognitive diagnosis models in psychometrics \citep{rupp2008unique, von2019handbook}) and also in machine learning (such as deep belief networks and deep Boltzmann machines \citep{hinton2006fast, goodfellow2016deep}).}

As a final remark, in a study of the geometry of the simplest discrete latent variable model -- the latent class model with a unidimensional latent variable, and in its special case with only $p=2$ observed variables, \cite{fienberg2009} remarked that ``\textit{The study of higher dimensional tables is still an open area of research. The mathematical machinery required to handle larger dimensions is considerably more complicated}''. 
Indeed, due to the complexity and nonlinearity of discrete models with latent and graphical structures, previous studies about identifiability either cleverly but also directly draw on Kruskal's Theorem 
or focus on a small number of variables.
This work contributes a new technical framework (Lemma \ref{lem-poly} and related explanations in Section \ref{sec-overview}) useful to study the identifiability and geometry of general $p$-dimensional tables, which we hope will be useful more broadly.

\begin{acks}[Acknowledgments]
The author sincerely thanks the editor Prof.  Davy Paindaveine, an anonymous associate editor, and two anonymous reviewers for many constructive and detailed comments that helped to significantly improve this manuscript.
\end{acks}

\begin{funding}
The author Yuqi Gu is supported by NSF Grant DMS-2210796.
\end{funding}

\begin{supplement}
The Supplementary Material \cite{supplement} contains the technical proofs of all theoretical results, details of the algorithms, and an additional real-world example.
\end{supplement}

\bibliographystyle{imsart-nameyear.bst}
\bibliography{ref_new}

\begin{thebibliography}{53}

\bibitem[\protect\citeauthoryear{Allman, Matias and Rhodes}{2009}]{allman2009}
\begin{barticle}[author]
\bauthor{\bsnm{Allman},~\bfnm{Elizabeth~S}\binits{E.~S.}},
  \bauthor{\bsnm{Matias},~\bfnm{Catherine}\binits{C.}} \AND
  \bauthor{\bsnm{Rhodes},~\bfnm{John~A}\binits{J.~A.}}
(\byear{2009}).
\btitle{Identifiability of parameters in latent structure models with many
  observed variables}.
\bjournal{Ann. Statist.}
\bvolume{37}
\bpages{3099--3132}.
\bdoi{10.1214/09-AOS689}
\end{barticle}
\endbibitem

\bibitem[\protect\citeauthoryear{Allman, Matias and
  Rhodes}{2011}]{allman2011parameter}
\begin{barticle}[author]
\bauthor{\bsnm{Allman},~\bfnm{Elizabeth~S}\binits{E.~S.}},
  \bauthor{\bsnm{Matias},~\bfnm{Catherine}\binits{C.}} \AND
  \bauthor{\bsnm{Rhodes},~\bfnm{John~A}\binits{J.~A.}}
(\byear{2011}).
\btitle{Parameter identifiability in a class of random graph mixture models}.
\bjournal{J. Stat. Plan. Inference}
\bvolume{141}
\bpages{1719--1736}.
\bdoi{https://doi.org/10.1016/j.jspi.2010.11.022}
\end{barticle}
\endbibitem

\bibitem[\protect\citeauthoryear{Allman and Rhodes}{2008}]{allman2008covarion}
\begin{barticle}[author]
\bauthor{\bsnm{Allman},~\bfnm{Elizabeth~S}\binits{E.~S.}} \AND
  \bauthor{\bsnm{Rhodes},~\bfnm{John~A}\binits{J.~A.}}
(\byear{2008}).
\btitle{The identifiability of covarion models in phylogenetics}.
\bjournal{IEEE/ACM Trans. Comput. Biol. Bioinform.}
\bvolume{6}
\bpages{76--88}.
\bdoi{10.1109/TCBB.2008.52}
\end{barticle}
\endbibitem

\bibitem[\protect\citeauthoryear{Allman et~al.}{2015}]{allman2015dag}
\begin{barticle}[author]
\bauthor{\bsnm{Allman},~\bfnm{Elizabeth~S}\binits{E.~S.}},
  \bauthor{\bsnm{Rhodes},~\bfnm{John~A}\binits{J.~A.}},
  \bauthor{\bsnm{Stanghellini},~\bfnm{Elena}\binits{E.}} \AND
  \bauthor{\bsnm{Valtorta},~\bfnm{Marco}\binits{M.}}
(\byear{2015}).
\btitle{Parameter identifiability of discrete {B}ayesian networks with hidden
  variables}.
\bjournal{J. Causal Inference}
\bvolume{3}
\bpages{189--205}.
\bdoi{https://doi.org/10.1515/jci-2014-0021}
\end{barticle}
\endbibitem

\bibitem[\protect\citeauthoryear{Anderson and Rubin}{1956}]{anderson1956fa}
\begin{binproceedings}[author]
\bauthor{\bsnm{Anderson},~\bfnm{Theodore~W}\binits{T.~W.}} \AND
  \bauthor{\bsnm{Rubin},~\bfnm{Herman}\binits{H.}}
(\byear{1956}).
\btitle{Statistical inference in factor analysis}.
In \bbooktitle{Proceedings of the Third Berkeley Symposium on Mathematical
  Statistics and Probability}
\bvolume{5}
\bpages{111--150}.
\end{binproceedings}
\endbibitem

\bibitem[\protect\citeauthoryear{Bing, Bunea and
  Wegkamp}{2023}]{bing2020detecting}
\begin{barticle}[author]
\bauthor{\bsnm{Bing},~\bfnm{Xin}\binits{X.}},
  \bauthor{\bsnm{Bunea},~\bfnm{Florentina}\binits{F.}} \AND
  \bauthor{\bsnm{Wegkamp},~\bfnm{Marten}\binits{M.}}
(\byear{2023}).
\btitle{Detecting approximate replicate components of a high-dimensional random
  vector with latent structure}.
\bjournal{Bernoulli}
\bvolume{29}
\bpages{1368--1391}.
\bdoi{10.3150/22-BEJ1502}
\end{barticle}
\endbibitem

\bibitem[\protect\citeauthoryear{Bing et~al.}{2020}]{bing2020overlap}
\begin{barticle}[author]
\bauthor{\bsnm{Bing},~\bfnm{Xin}\binits{X.}},
  \bauthor{\bsnm{Bunea},~\bfnm{Florentina}\binits{F.}},
  \bauthor{\bsnm{Ning},~\bfnm{Yang}\binits{Y.}} \AND
  \bauthor{\bsnm{Wegkamp},~\bfnm{Marten}\binits{M.}}
(\byear{2020}).
\btitle{Adaptive estimation in structured factor models with applications to
  overlapping clustering}.
\bjournal{Ann. Statist.}
\bvolume{48}
\bpages{2055--2081}.
\bdoi{10.1214/19-AOS1877}
\end{barticle}
\endbibitem

\bibitem[\protect\citeauthoryear{Bollen}{1989}]{bollen1989sem}
\begin{bbook}[author]
\bauthor{\bsnm{Bollen},~\bfnm{Kenneth~A}\binits{K.~A.}}
(\byear{1989}).
\btitle{Structural Equations with Latent Variables}
\bvolume{210}.
\bpublisher{John Wiley \& Sons}.
\end{bbook}
\endbibitem

\bibitem[\protect\citeauthoryear{Carreira-Perpin{\'a}n and
  Renals}{2000}]{carreira2000practical}
\begin{barticle}[author]
\bauthor{\bsnm{Carreira-Perpin{\'a}n},~\bfnm{Miguel~A}\binits{M.~A.}} \AND
  \bauthor{\bsnm{Renals},~\bfnm{Steve}\binits{S.}}
(\byear{2000}).
\btitle{Practical identifiability of finite mixtures of multivariate
  {B}ernoulli distributions}.
\bjournal{Neural Comput.}
\bvolume{12}
\bpages{141--152}.
\bdoi{10.1162/089976600300015925}
\end{barticle}
\endbibitem

\bibitem[\protect\citeauthoryear{Celeux and Govaert}{1992}]{celeux1992cem}
\begin{barticle}[author]
\bauthor{\bsnm{Celeux},~\bfnm{Gilles}\binits{G.}} \AND
  \bauthor{\bsnm{Govaert},~\bfnm{G{\'e}rard}\binits{G.}}
(\byear{1992}).
\btitle{A classification {EM} algorithm for clustering and two stochastic
  versions}.
\bjournal{Comput. Statist. Data Anal.}
\bvolume{14}
\bpages{315--332}.
\bdoi{https://doi.org/10.1016/0167-9473(92)90042-E}
\end{barticle}
\endbibitem

\bibitem[\protect\citeauthoryear{Chen, Culpepper and
  Liang}{2020}]{chen2020slcm}
\begin{barticle}[author]
\bauthor{\bsnm{Chen},~\bfnm{Yinyin}\binits{Y.}},
  \bauthor{\bsnm{Culpepper},~\bfnm{Steven}\binits{S.}} \AND
  \bauthor{\bsnm{Liang},~\bfnm{Feng}\binits{F.}}
(\byear{2020}).
\btitle{A sparse latent class model for cognitive diagnosis}.
\bjournal{Psychometrika}
\bvolume{85}
\bpages{1--33}.
\bdoi{https://doi.org/10.1007/s11336-019-09693-2}
\end{barticle}
\endbibitem

\bibitem[\protect\citeauthoryear{Chen et~al.}{2015}]{chen2015qmat}
\begin{barticle}[author]
\bauthor{\bsnm{Chen},~\bfnm{Yunxiao}\binits{Y.}},
  \bauthor{\bsnm{Liu},~\bfnm{Jingchen}\binits{J.}},
  \bauthor{\bsnm{Xu},~\bfnm{Gongjun}\binits{G.}} \AND
  \bauthor{\bsnm{Ying},~\bfnm{Zhiliang}\binits{Z.}}
(\byear{2015}).
\btitle{Statistical analysis of {$Q$}-matrix based diagnostic classification
  models}.
\bjournal{J. Amer. Statist. Assoc.}
\bvolume{110}
\bpages{850--866}.
\bdoi{https://doi.org/10.1080/01621459.2014.934827}
\end{barticle}
\endbibitem

\bibitem[\protect\citeauthoryear{Choi et~al.}{2011}]{choi2011learning}
\begin{barticle}[author]
\bauthor{\bsnm{Choi},~\bfnm{Myung~Jin}\binits{M.~J.}},
  \bauthor{\bsnm{Tan},~\bfnm{Vincent~YF}\binits{V.~Y.}},
  \bauthor{\bsnm{Anandkumar},~\bfnm{Animashree}\binits{A.}} \AND
  \bauthor{\bsnm{Willsky},~\bfnm{Alan~S}\binits{A.~S.}}
(\byear{2011}).
\btitle{Learning latent tree graphical models}.
\bjournal{J. Mach. Learn. Res.}
\bvolume{12}
\bpages{1771--1812}.
\end{barticle}
\endbibitem

\bibitem[\protect\citeauthoryear{Culpepper}{2015}]{culpepper2015dina}
\begin{barticle}[author]
\bauthor{\bsnm{Culpepper},~\bfnm{Steven~Andrew}\binits{S.~A.}}
(\byear{2015}).
\btitle{{B}ayesian estimation of the {DINA} model with {G}ibbs sampling}.
\bjournal{J. Educ. Behav. Stat.}
\bvolume{40}
\bpages{454--476}.
\bdoi{https://doi.org/10.3102/10769986155954}
\end{barticle}
\endbibitem

\bibitem[\protect\citeauthoryear{Culpepper}{2019}]{culpepper2019ordinal}
\begin{barticle}[author]
\bauthor{\bsnm{Culpepper},~\bfnm{Steven~Andrew}\binits{S.~A.}}
(\byear{2019}).
\btitle{An exploratory diagnostic model for ordinal responses with binary
  attributes: identifiability and estimation}.
\bjournal{Psychometrika}
\bvolume{84}
\bpages{921--940}.
\bdoi{https://doi.org/10.1007/s11336-019-09683-4}
\end{barticle}
\endbibitem

\bibitem[\protect\citeauthoryear{Drton}{2009}]{drton2009lrt}
\begin{barticle}[author]
\bauthor{\bsnm{Drton},~\bfnm{Mathias}\binits{M.}}
(\byear{2009}).
\btitle{Likelihood ratio tests and singularities}.
\bjournal{Ann. Statist.}
\bpages{979--1012}.
\bdoi{10.1214/07-AOS571}
\end{barticle}
\endbibitem

\bibitem[\protect\citeauthoryear{Erosheva, Fienberg and
  Joutard}{2007}]{erosheva2007aoas}
\begin{barticle}[author]
\bauthor{\bsnm{Erosheva},~\bfnm{Elena~A}\binits{E.~A.}},
  \bauthor{\bsnm{Fienberg},~\bfnm{Stephen~E}\binits{S.~E.}} \AND
  \bauthor{\bsnm{Joutard},~\bfnm{Cyrille}\binits{C.}}
(\byear{2007}).
\btitle{Describing disability through individual-level mixture models for
  multivariate binary data}.
\bjournal{Ann. Appl. Stat.}
\bvolume{1}
\bpages{502--537}.
\bdoi{10.1214/07-AOAS126}
\end{barticle}
\endbibitem

\bibitem[\protect\citeauthoryear{Fang, Liu and Ying}{2019}]{fang2019}
\begin{barticle}[author]
\bauthor{\bsnm{Fang},~\bfnm{Guanhua}\binits{G.}},
  \bauthor{\bsnm{Liu},~\bfnm{Jingchen}\binits{J.}} \AND
  \bauthor{\bsnm{Ying},~\bfnm{Zhiliang}\binits{Z.}}
(\byear{2019}).
\btitle{On the identifiability of diagnostic classification models}.
\bjournal{Psychometrika}
\bvolume{84}
\bpages{19--40}.
\bdoi{https://doi.org/10.1007/s11336-018-09658-x}
\end{barticle}
\endbibitem

\bibitem[\protect\citeauthoryear{Fang et~al.}{2021}]{fang2020bifactor}
\begin{barticle}[author]
\bauthor{\bsnm{Fang},~\bfnm{Guanhua}\binits{G.}},
  \bauthor{\bsnm{Guo},~\bfnm{Jinxin}\binits{J.}},
  \bauthor{\bsnm{Xu},~\bfnm{Xin}\binits{X.}},
  \bauthor{\bsnm{Ying},~\bfnm{Zhiliang}\binits{Z.}} \AND
  \bauthor{\bsnm{Zhang},~\bfnm{Susu}\binits{S.}}
(\byear{2021}).
\btitle{Identifiability of bifactor models}.
\bjournal{Stat. Sin.}
\bvolume{31}
\bpages{2309--2330}.
\bdoi{https://doi.org/10.5705/ss.202020.0386}
\end{barticle}
\endbibitem

\bibitem[\protect\citeauthoryear{Fienberg et~al.}{2009}]{fienberg2009}
\begin{binbook}[author]
\bauthor{\bsnm{Fienberg},~\bfnm{S.~E.}\binits{S.~E.}},
  \bauthor{\bsnm{Hersh},~\bfnm{P.}\binits{P.}},
  \bauthor{\bsnm{Rinaldo},~\bfnm{A.}\binits{A.}} \AND
  \bauthor{\bsnm{Zhou},~\bfnm{Y.}\binits{Y.}}
(\byear{2009}).
\btitle{Maximum likelihood estimation in latent class models for contingency
  table data}
In \bbooktitle{Algebraic and Geometric Methods in Statistics}
\bpages{27--62}.
\bpublisher{Cambridge University Press}.
\end{binbook}
\endbibitem

\bibitem[\protect\citeauthoryear{Gassiat, Cleynen and
  Robin}{2016}]{gassiat2016inference}
\begin{barticle}[author]
\bauthor{\bsnm{Gassiat},~\bfnm{Elisabeth}\binits{E.}},
  \bauthor{\bsnm{Cleynen},~\bfnm{Alice}\binits{A.}} \AND
  \bauthor{\bsnm{Robin},~\bfnm{Stephane}\binits{S.}}
(\byear{2016}).
\btitle{Inference in finite state space non parametric hidden Markov models and
  applications}.
\bjournal{Stat. Comput.}
\bvolume{26}
\bpages{61--71}.
\bdoi{https://doi.org/10.1007/s11222-014-9523-8}
\end{barticle}
\endbibitem

\bibitem[\protect\citeauthoryear{Gassiat and
  Rousseau}{2016}]{gassiat2016nonpahmm}
\begin{barticle}[author]
\bauthor{\bsnm{Gassiat},~\bfnm{Elisabeth}\binits{E.}} \AND
  \bauthor{\bsnm{Rousseau},~\bfnm{Judith}\binits{J.}}
(\byear{2016}).
\btitle{Nonparametric finite translation hidden {M}arkov models and
  extensions}.
\bjournal{Bernoulli}
\bvolume{22}
\bpages{193--212}.
\bdoi{10.3150/14-BEJ631}
\end{barticle}
\endbibitem

\bibitem[\protect\citeauthoryear{George and Robitzsch}{2015}]{george2015cdm}
\begin{barticle}[author]
\bauthor{\bsnm{George},~\bfnm{Ann~Cathrice}\binits{A.~C.}} \AND
  \bauthor{\bsnm{Robitzsch},~\bfnm{Alexander}\binits{A.}}
(\byear{2015}).
\btitle{Cognitive diagnosis models in {R}: {A} didactic}.
\bjournal{Quant. Meth. Psych.}
\bvolume{11}
\bpages{189--205}.
\bdoi{https://doi.org/10.20982/tqmp.11.3.p189}
\end{barticle}
\endbibitem

\bibitem[\protect\citeauthoryear{Goodfellow, Bengio and
  Courville}{2016}]{goodfellow2016deep}
\begin{bbook}[author]
\bauthor{\bsnm{Goodfellow},~\bfnm{Ian}\binits{I.}},
  \bauthor{\bsnm{Bengio},~\bfnm{Yoshua}\binits{Y.}} \AND
  \bauthor{\bsnm{Courville},~\bfnm{Aaron}\binits{A.}}
(\byear{2016}).
\btitle{Deep Learning}.
\bpublisher{MIT press}.
\end{bbook}
\endbibitem

\bibitem[\protect\citeauthoryear{Goodman}{1974}]{goodman1974}
\begin{barticle}[author]
\bauthor{\bsnm{Goodman},~\bfnm{Leo~A}\binits{L.~A.}}
(\byear{1974}).
\btitle{Exploratory latent structure analysis using both identifiable and
  unidentifiable models}.
\bjournal{Biometrika}
\bvolume{61}
\bpages{215--231}.
\bdoi{https://doi.org/10.1093/biomet/61.2.215}
\end{barticle}
\endbibitem

\bibitem[\protect\citeauthoryear{Gu}{2024}]{supplement}
\begin{barticle}[author]
\bauthor{\bsnm{Gu},~\bfnm{Yuqi}\binits{Y.}}
(\byear{2024}).
\btitle{Supplement to ``Blessing of Dependence: identifiability and geometry of
  discrete models with multiple binary latent variables''}.
\end{barticle}
\endbibitem

\bibitem[\protect\citeauthoryear{Gu and Dunson}{2023}]{gu2023bp}
\begin{barticle}[author]
\bauthor{\bsnm{Gu},~\bfnm{Yuqi}\binits{Y.}} \AND
  \bauthor{\bsnm{Dunson},~\bfnm{David~B}\binits{D.~B.}}
(\byear{2023}).
\btitle{Bayesian Pyramids: identifiable multilayer discrete latent structure
  models for discrete data}.
\bjournal{J. R. Stat. Soc., B: Stat. Methodol.}
\bvolume{85}
\bpages{399--426}.
\bdoi{https://doi.org/10.1093/jrsssb/qkad010}
\end{barticle}
\endbibitem

\bibitem[\protect\citeauthoryear{Gu and Xu}{2019}]{gu2019dina}
\begin{barticle}[author]
\bauthor{\bsnm{Gu},~\bfnm{Yuqi}\binits{Y.}} \AND
  \bauthor{\bsnm{Xu},~\bfnm{Gongjun}\binits{G.}}
(\byear{2019}).
\btitle{The sufficient and necessary condition for the identifiability and
  estimability of the {DINA} model}.
\bjournal{Psychometrika}
\bvolume{84}
\bpages{468--483}.
\bdoi{https://doi.org/10.1007/s11336-018-9619-8}
\end{barticle}
\endbibitem

\bibitem[\protect\citeauthoryear{Gu and Xu}{2021}]{gu2021idq}
\begin{barticle}[author]
\bauthor{\bsnm{Gu},~\bfnm{Yuqi}\binits{Y.}} \AND
  \bauthor{\bsnm{Xu},~\bfnm{Gongjun}\binits{G.}}
(\byear{2021}).
\btitle{Sufficient and necessary conditions for the identifiability of the
  {$Q$}-matrix}.
\bjournal{Stat. Sin.}
\bvolume{31}
\bpages{449--472}.
\bdoi{https://doi.org/10.5705/ss.202018.0410}
\end{barticle}
\endbibitem

\bibitem[\protect\citeauthoryear{Gu and Xu}{2023}]{gu2023jmle}
\begin{barticle}[author]
\bauthor{\bsnm{Gu},~\bfnm{Yuqi}\binits{Y.}} \AND
  \bauthor{\bsnm{Xu},~\bfnm{Gongjun}\binits{G.}}
(\byear{2023}).
\btitle{A joint MLE approach to large-scale structured latent attribute
  analysis}.
\bjournal{J. Amer. Statist. Assoc.}
\bvolume{118}
\bpages{746--760}.
\bdoi{https://doi.org/10.1080/01621459.2021.1955689}
\end{barticle}
\endbibitem

\bibitem[\protect\citeauthoryear{Gyllenberg et~al.}{1994}]{gyllenberg1994non}
\begin{barticle}[author]
\bauthor{\bsnm{Gyllenberg},~\bfnm{Mats}\binits{M.}},
  \bauthor{\bsnm{Koski},~\bfnm{Timo}\binits{T.}},
  \bauthor{\bsnm{Reilink},~\bfnm{Edwin}\binits{E.}} \AND
  \bauthor{\bsnm{Verlaan},~\bfnm{Martin}\binits{M.}}
(\byear{1994}).
\btitle{Non-uniqueness in probabilistic numerical identification of bacteria}.
\bjournal{J. Appl. Probab.}
\bpages{542--548}.
\bdoi{https://doi.org/10.2307/3215044}
\end{barticle}
\endbibitem

\bibitem[\protect\citeauthoryear{Halpern and Sontag}{2013}]{halpern2013noisyor}
\begin{binproceedings}[author]
\bauthor{\bsnm{Halpern},~\bfnm{Yoni}\binits{Y.}} \AND
  \bauthor{\bsnm{Sontag},~\bfnm{David}\binits{D.}}
(\byear{2013}).
\btitle{Unsupervised Learning of {N}oisy-{O}r {B}ayesian Networks}.
In \bbooktitle{Uncertainty in Artificial Intelligence}
\bpages{272--281}.
\bpublisher{Citeseer}.
\end{binproceedings}
\endbibitem

\bibitem[\protect\citeauthoryear{Hinton, Osindero and
  Teh}{2006}]{hinton2006fast}
\begin{barticle}[author]
\bauthor{\bsnm{Hinton},~\bfnm{Geoffrey~E}\binits{G.~E.}},
  \bauthor{\bsnm{Osindero},~\bfnm{Simon}\binits{S.}} \AND
  \bauthor{\bsnm{Teh},~\bfnm{Yee-Whye}\binits{Y.-W.}}
(\byear{2006}).
\btitle{A fast learning algorithm for deep belief nets}.
\bjournal{Neural Comput.}
\bvolume{18}
\bpages{1527--1554}.
\bdoi{10.1162/neco.2006.18.7.1527}
\end{barticle}
\endbibitem

\bibitem[\protect\citeauthoryear{Junker and Sijtsma}{2001}]{junker2001dina}
\begin{barticle}[author]
\bauthor{\bsnm{Junker},~\bfnm{Brian~W.}\binits{B.~W.}} \AND
  \bauthor{\bsnm{Sijtsma},~\bfnm{Klaas}\binits{K.}}
(\byear{2001}).
\btitle{Cognitive assessment models with few assumptions, and connections with
  nonparametric item response theory}.
\bjournal{Appl. Psychol. Meas.}
\bvolume{25}
\bpages{258--272}.
\bdoi{https://doi.org/10.1177/01466210122032064}
\end{barticle}
\endbibitem

\bibitem[\protect\citeauthoryear{Kolda and Bader}{2009}]{koldabader2009}
\begin{barticle}[author]
\bauthor{\bsnm{Kolda},~\bfnm{Tamara~G}\binits{T.~G.}} \AND
  \bauthor{\bsnm{Bader},~\bfnm{Brett~W}\binits{B.~W.}}
(\byear{2009}).
\btitle{Tensor decompositions and applications}.
\bjournal{SIAM Review}
\bvolume{51}
\bpages{455--500}.
\bdoi{https://doi.org/10.1137/07070111X}
\end{barticle}
\endbibitem

\bibitem[\protect\citeauthoryear{Kruskal}{1977}]{kruskal1977three}
\begin{barticle}[author]
\bauthor{\bsnm{Kruskal},~\bfnm{Joseph~B}\binits{J.~B.}}
(\byear{1977}).
\btitle{Three-way arrays: rank and uniqueness of trilinear decompositions, with
  application to arithmetic complexity and statistics}.
\bjournal{Linear Algebra Appl.}
\bvolume{18}
\bpages{95--138}.
\bdoi{https://doi.org/10.1016/0024-3795(77)90069-6}
\end{barticle}
\endbibitem

\bibitem[\protect\citeauthoryear{Lanza and Rhoades}{2013}]{lanza2013}
\begin{barticle}[author]
\bauthor{\bsnm{Lanza},~\bfnm{Stephanie~T}\binits{S.~T.}} \AND
  \bauthor{\bsnm{Rhoades},~\bfnm{Brittany~L}\binits{B.~L.}}
(\byear{2013}).
\btitle{Latent class analysis: an alternative perspective on subgroup analysis
  in prevention and treatment}.
\bjournal{Prev. Sci.}
\bvolume{14}
\bpages{157--168}.
\bdoi{10.1007/s11121-011-0201-1}
\end{barticle}
\endbibitem

\bibitem[\protect\citeauthoryear{Lauritzen}{1996}]{lauritzen1996graphical}
\begin{bbook}[author]
\bauthor{\bsnm{Lauritzen},~\bfnm{Steffen~L}\binits{S.~L.}}
(\byear{1996}).
\btitle{Graphical Models}
\bvolume{17}.
\bpublisher{Clarendon Press}.
\end{bbook}
\endbibitem

\bibitem[\protect\citeauthoryear{Lazarsfeld and
  Henry}{1968}]{lazarsfeld1968latent}
\begin{bbook}[author]
\bauthor{\bsnm{Lazarsfeld},~\bfnm{Paul~Felix}\binits{P.~F.}} \AND
  \bauthor{\bsnm{Henry},~\bfnm{Neil~W}\binits{N.~W.}}
(\byear{1968}).
\btitle{Latent Structure Analysis}.
\bpublisher{Houghton Mifflin Boston}.
\end{bbook}
\endbibitem

\bibitem[\protect\citeauthoryear{Mealli, Pacini and
  Stanghellini}{2016}]{mealli2016causal}
\begin{barticle}[author]
\bauthor{\bsnm{Mealli},~\bfnm{Fabrizia}\binits{F.}},
  \bauthor{\bsnm{Pacini},~\bfnm{Barbara}\binits{B.}} \AND
  \bauthor{\bsnm{Stanghellini},~\bfnm{Elena}\binits{E.}}
(\byear{2016}).
\btitle{Identification of principal causal effects using additional outcomes in
  concentration graphs}.
\bjournal{J. Educ. Behav. Stat.}
\bvolume{41}
\bpages{463--480}.
\bdoi{https://doi.org/10.3102/1076998616646199}
\end{barticle}
\endbibitem

\bibitem[\protect\citeauthoryear{Mourad et~al.}{2013}]{mourad2013survey}
\begin{barticle}[author]
\bauthor{\bsnm{Mourad},~\bfnm{Rapha{\"e}l}\binits{R.}},
  \bauthor{\bsnm{Sinoquet},~\bfnm{Christine}\binits{C.}},
  \bauthor{\bsnm{Zhang},~\bfnm{Nevin~Lianwen}\binits{N.~L.}},
  \bauthor{\bsnm{Liu},~\bfnm{Tengfei}\binits{T.}} \AND
  \bauthor{\bsnm{Leray},~\bfnm{Philippe}\binits{P.}}
(\byear{2013}).
\btitle{A survey on latent tree models and applications}.
\bjournal{J. Artif. Intell. Res.}
\bvolume{47}
\bpages{157--203}.
\end{barticle}
\endbibitem

\bibitem[\protect\citeauthoryear{Rupp and Templin}{2008}]{rupp2008unique}
\begin{barticle}[author]
\bauthor{\bsnm{Rupp},~\bfnm{Andr{\'e}~A}\binits{A.~A.}} \AND
  \bauthor{\bsnm{Templin},~\bfnm{Jonathan~L}\binits{J.~L.}}
(\byear{2008}).
\btitle{Unique characteristics of diagnostic classification models: A
  comprehensive review of the current state-of-the-art}.
\bjournal{Measurement}
\bvolume{6}
\bpages{219--262}.
\bdoi{https://doi.org/10.1080/15366360802490866}
\end{barticle}
\endbibitem

\bibitem[\protect\citeauthoryear{Settimi and Smith}{2000}]{settimi2000geometry}
\begin{barticle}[author]
\bauthor{\bsnm{Settimi},~\bfnm{Raffaella}\binits{R.}} \AND
  \bauthor{\bsnm{Smith},~\bfnm{Jim~Q}\binits{J.~Q.}}
(\byear{2000}).
\btitle{Geometry, moments and conditional independence trees with hidden
  variables}.
\bjournal{Ann. Statist.}
\bpages{1179--1205}.
\bdoi{10.1214/aos/1015956712}
\end{barticle}
\endbibitem

\bibitem[\protect\citeauthoryear{Shiers et~al.}{2016}]{shiers2016gltm}
\begin{barticle}[author]
\bauthor{\bsnm{Shiers},~\bfnm{Nathaniel}\binits{N.}},
  \bauthor{\bsnm{Zwiernik},~\bfnm{Piotr}\binits{P.}},
  \bauthor{\bsnm{Aston},~\bfnm{John~AD}\binits{J.~A.}} \AND
  \bauthor{\bsnm{Smith},~\bfnm{Jim~Q}\binits{J.~Q.}}
(\byear{2016}).
\btitle{The correlation space of {G}aussian latent tree models and model
  selection without fitting}.
\bjournal{Biometrika}
\bvolume{103}
\bpages{531--545}.
\bdoi{https://doi.org/10.1093/biomet/asw032}
\end{barticle}
\endbibitem

\bibitem[\protect\citeauthoryear{Shwe et~al.}{1991}]{shwe1991qmrdt}
\begin{barticle}[author]
\bauthor{\bsnm{Shwe},~\bfnm{Michael~A}\binits{M.~A.}},
  \bauthor{\bsnm{Middleton},~\bfnm{Blackford}\binits{B.}},
  \bauthor{\bsnm{Heckerman},~\bfnm{David~E}\binits{D.~E.}},
  \bauthor{\bsnm{Henrion},~\bfnm{Max}\binits{M.}},
  \bauthor{\bsnm{Horvitz},~\bfnm{Eric~J}\binits{E.~J.}},
  \bauthor{\bsnm{Lehmann},~\bfnm{Harold~P}\binits{H.~P.}} \AND
  \bauthor{\bsnm{Cooper},~\bfnm{Gregory~F}\binits{G.~F.}}
(\byear{1991}).
\btitle{Probabilistic diagnosis using a reformulation of the INTERNIST-1/QMR
  knowledge base}.
\bjournal{Methods Inf. Med.}
\bvolume{30}
\bpages{241--255}.
\end{barticle}
\endbibitem

\bibitem[\protect\citeauthoryear{Silva et~al.}{2006}]{silva2006latent}
\begin{barticle}[author]
\bauthor{\bsnm{Silva},~\bfnm{Ricardo}\binits{R.}},
  \bauthor{\bsnm{Scheines},~\bfnm{Richard}\binits{R.}},
  \bauthor{\bsnm{Glymour},~\bfnm{Clark}\binits{C.}},
  \bauthor{\bsnm{Spirtes},~\bfnm{Peter}\binits{P.}} \AND
  \bauthor{\bsnm{Chickering},~\bfnm{David~Maxwell}\binits{D.~M.}}
(\byear{2006}).
\btitle{Learning the structure of linear latent variable models.}
\bjournal{J. Mach. Learn. Res.}
\bvolume{7}
\bpages{191--246}.
\end{barticle}
\endbibitem

\bibitem[\protect\citeauthoryear{Stanghellini and
  Vantaggi}{2013}]{stanghellini2013id}
\begin{barticle}[author]
\bauthor{\bsnm{Stanghellini},~\bfnm{Elena}\binits{E.}} \AND
  \bauthor{\bsnm{Vantaggi},~\bfnm{Barbara}\binits{B.}}
(\byear{2013}).
\btitle{Identification of discrete concentration graph models with one hidden
  binary variable}.
\bjournal{Bernoulli}
\bvolume{19}
\bpages{1920–1937}.
\bdoi{10.3150/12-BEJ435}
\end{barticle}
\endbibitem

\bibitem[\protect\citeauthoryear{{Tatsuoka}}{1983}]{tatsuoka1983}
\begin{barticle}[author]
\bauthor{\bsnm{{Tatsuoka}},~\bfnm{Kikumi~K.}\binits{K.~K.}}
(\byear{1983}).
\btitle{Rule space: an approach for dealing with misconceptions based on item
  response theory}.
\bjournal{J. Educ. Meas.}
\bvolume{20}
\bpages{345--354}.
\bdoi{https://doi.org/10.1111/j.1745-3984.1983.tb00212.x}
\end{barticle}
\endbibitem

\bibitem[\protect\citeauthoryear{von Davier and Lee}{2019}]{von2019handbook}
\begin{barticle}[author]
\bauthor{\bparticle{von} \bsnm{Davier},~\bfnm{Matthias}\binits{M.}} \AND
  \bauthor{\bsnm{Lee},~\bfnm{Young-Sun}\binits{Y.-S.}}
(\byear{2019}).
\btitle{Handbook of Diagnostic Classification Models}.
\bjournal{Cham: Springer International Publishing}.
\bdoi{https://doi.org/10.1007/978-3-030-05584-4}
\end{barticle}
\endbibitem

\bibitem[\protect\citeauthoryear{Wu, Deloria-Knoll and
  Zeger}{2017}]{wu2017nested}
\begin{barticle}[author]
\bauthor{\bsnm{Wu},~\bfnm{Zhenke}\binits{Z.}},
  \bauthor{\bsnm{Deloria-Knoll},~\bfnm{Maria}\binits{M.}} \AND
  \bauthor{\bsnm{Zeger},~\bfnm{Scott~L}\binits{S.~L.}}
(\byear{2017}).
\btitle{Nested partially latent class models for dependent binary data;
  estimating disease etiology}.
\bjournal{Biostat.}
\bvolume{18}
\bpages{200--213}.
\bdoi{10.1093/biostatistics/kxw037}
\end{barticle}
\endbibitem

\bibitem[\protect\citeauthoryear{Xu}{2017}]{xu2017rlcm}
\begin{barticle}[author]
\bauthor{\bsnm{Xu},~\bfnm{Gongjun}\binits{G.}}
(\byear{2017}).
\btitle{Identifiability of restricted latent class models with binary
  responses}.
\bjournal{Ann. Statist.}
\bvolume{45}
\bpages{675--707}.
\bdoi{10.1214/16-AOS1464}
\end{barticle}
\endbibitem

\bibitem[\protect\citeauthoryear{Zwiernik}{2016}]{zwiernik2016semialg}
\begin{barticle}[author]
\bauthor{\bsnm{Zwiernik},~\bfnm{Piotr}\binits{P.}}
(\byear{2016}).
\btitle{Semialgebraic statistics and latent tree models}.
\bjournal{Monographs on Statistics and Applied Probability}
\bvolume{146}
\bpages{146}.
\end{barticle}
\endbibitem

\bibitem[\protect\citeauthoryear{Zwiernik and Smith}{2012}]{zwiernik2012tree}
\begin{barticle}[author]
\bauthor{\bsnm{Zwiernik},~\bfnm{Piotr}\binits{P.}} \AND
  \bauthor{\bsnm{Smith},~\bfnm{Jim~Q}\binits{J.~Q.}}
(\byear{2012}).
\btitle{Tree cumulants and the geometry of binary tree models}.
\bjournal{Bernoulli}
\bvolume{18}
\bpages{290--321}.
\bdoi{10.3150/10-BEJ338}
\end{barticle}
\endbibitem

\end{thebibliography}

\clearpage

\begin{center}
 {\LARGE{Supplement to “Blessing of Dependence: identifiability and geometry of discrete models with
multiple binary latent variables”}}
\end{center}

\bigskip
\bigskip
\bigskip

 \renewcommand{\thesection}{S.\arabic{section}}  
 \renewcommand{\thetable}{S.\arabic{table}}  
 \renewcommand{\thefigure}{S.\arabic{figure}}
 \renewcommand{\theequation}{S.\arabic{equation}}
 \renewcommand{\thetheorem}{S.\arabic{theorem}}
 \renewcommand{\thelemma}{S.\arabic{lemma}}

 \setcounter{section}{0}
 \setcounter{equation}{0}
 \setcounter{figure}{0}

The Supplementary Material is organized as follows. 
Section \ref{sec-proof1} contains the proof of the main result Theorem \ref{thm-main}.
Section \ref{sec-addproofs} contains the proofs of the remaining theoretical results in the paper.
Section \ref{sec-algodetail} contains details of the EM algorithms and
Section \ref{sec-addreal} contains an additional real-world example.

\section{Proof of the main result Theorem \ref{thm-main}}
\label{sec-proof1}

We first define some notation. 
For multiple vectors $\bo a_1,\ldots,\bo a_M$ of the same length $L$ with $\bo a_m = (a_{m1},\ldots,a_{mL})$ for each $m$, define their elementwise maximum to be the vector 
\begin{equation}\label{eq-bigvee}
\bigvee_{m=1}^M \bo a_m = \left(\max_{1\leq m\leq M} a_{m1},~ \ldots,~ \max_{1\leq m\leq M} a_{mL}\right).
\end{equation}
Therefore,  $\bigvee_{m=1}^M \bo a_m \succeq \bo b$ means that the elementwise maximum of the $M$ vectors $\bo a_1, \ldots, \bo a_M$ is elementwisely greater than or equal to the vector $\bo b$.

{We introduce the following useful lemma before proceeding with the proof.
\begin{lemma}\label{lem-neq}
Consider true graphical matrix $\mathbf G$ and associated true parameters $\btheta, \nnu$ that satisfy \eqref{eq-flip},  suppose alternative $\ov\G, \ov\btheta, \ov\nnu$ lead to the same distribution of the observed vector $\bo y$ as the true parameters.
Then $\ov\btheta$ and $\btheta$ must satisfy the following for any $c \in [d]$:
\begin{align*}
    \ov\theta^{(j)}_{c\mid 0} \neq \theta^{(j)}_{c\mid 1}, \quad
    \ov\theta^{(j)}_{c\mid 1} \neq \theta^{(j)}_{c\mid 0}.
\end{align*}
\end{lemma}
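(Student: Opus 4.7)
The plan is to establish both inequalities by a short convex-combination argument on the marginal distribution of a single observed variable $y_j$, leveraging the strict ordering \eqref{eq-flip} imposed on any valid parameter set. The key observation is this: because $\nu_{\aaa}>0$ for every $\aaa\in\{0,1\}^K$ (and the same must hold for any valid alternative $\ov\nnu$), the marginal probability $\mathbb{P}(y_j=c)$ is a strict convex combination of $\theta^{(j)}_{c\mid 0}$ and $\theta^{(j)}_{c\mid 1}$, and \eqref{eq-flip} pins down which of the two endpoints is larger.

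Concretely, let $\alpha_{k_j}$ denote the parent of $y_j$ in $\G$ and write $p_1:=\mathbb{P}(\alpha_{k_j}=1)=\sum_{\aaa:\,\alpha_{k_j}=1}\nu_{\aaa}\in(0,1)$. Then
$$
\mathbb{P}(y_j=c)=p_1\,\theta^{(j)}_{c\mid 1}+(1-p_1)\,\theta^{(j)}_{c\mid 0},
$$
and an analogous identity holds under $(\ov\G,\ov\btheta,\ov\nnu)$ with some $\ov p_1\in(0,1)$. For $c\in[d-1]$, \eqref{eq-flip} yields $\theta^{(j)}_{c\mid 0}<\theta^{(j)}_{c\mid 1}$ and $\ov\theta^{(j)}_{c\mid 0}<\ov\theta^{(j)}_{c\mid 1}$, so the two convex combinations satisfy the strict sandwiches
$$
\theta^{(j)}_{c\mid 0}<\mathbb{P}(y_j=c)<\theta^{(j)}_{c\mid 1},\qquad \ov\theta^{(j)}_{c\mid 0}<\mathbb{P}(y_j=c)<\ov\theta^{(j)}_{c\mid 1}.
$$
If $\ov\theta^{(j)}_{c\mid 0}=\theta^{(j)}_{c\mid 1}$, chaining these gives $\mathbb{P}(y_j=c)>\ov\theta^{(j)}_{c\mid 0}=\theta^{(j)}_{c\mid 1}>\mathbb{P}(y_j=c)$, a contradiction. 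The case $\ov\theta^{(j)}_{c\mid 1}=\theta^{(j)}_{c\mid 0}$ is disposed of identically with the inequalities reversed.

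For $c=d$, I would first sum the $d-1$ strict inequalities in \eqref{eq-flip} together with $\sum_{c=1}^d\theta^{(j)}_{c\mid x}=1$ to conclude $\theta^{(j)}_{d\mid 1}<\theta^{(j)}_{d\mid 0}$, and likewise for $\ov\btheta$. The same double-inequality logic then applies, only with the two endpoints of the convex combination switched in order, so both forbidden equalities are ruled out by the same contradiction.

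I do not foresee any substantial obstacle. The only bookkeeping points are (i) verifying $p_1,\ov p_1\in(0,1)$, which follows directly from the standing assumption that every $\nu_{\aaa}$ and $\ov\nu_{\aaa}$ is strictly positive, and (ii) recognizing that a valid alternative $\ov\btheta$ must itself obey \eqref{eq-flip} since that is part of the model's identification convention. Once these are in place the argument is essentially a one-line contradiction in each of the two directions, relying on nothing beyond the strict convexity of the mixture and the strict ordering of the conditional probabilities.
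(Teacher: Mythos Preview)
Your proposal is correct and follows essentially the same approach as the paper's proof: both argue by contradiction using the marginal $\mathbb{P}(y_j=c)$ as a strict convex combination of the two conditional probabilities, combine the strict ordering from \eqref{eq-flip} on both the true and alternative parameters to get a chain of inequalities that forces $\mathbb{P}(y_j=c)\neq\mathbb{P}(y_j=c)$, and handle $c=d$ by reversing the inequality. Your sandwich framing is slightly more explicit, but the content is identical.
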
}

In the following, we prove part (a) and part (b) of the theorem respectively.

\medskip
\noindent
\textbf{Proof of Part (a) of Theorem \ref{thm-main}.}
First note that under the assumptions of the current theorem, we can apply the previous Theorem \ref{thm-graph} to obtain that the matrix $\mathbf G$ is identifiable. So it remains to consider how to identify $(\bo\theta, \nnu)$.
Suppose alternative parameters $(\overline{\bo\theta}, \overline\nnu)$ lead to the same distribution of the observables as the true parameters $(\bo\theta, \nnu)$.

Recall that there are $p=2K$ observed variables under the condition of the theorem.
We first consider an arbitrary index $k\in[K]$ and an arbitrary binary pattern $\aaa'\in\{0,1\}^K$ with $\aaa'_k=0$.
Fixing this $\aaa'$ and fixing some $c\in[d-1]$, we define several $(2K)$-dimensional vectors:
\begin{align}\label{eq-tstar}
    \bo\Delta_{:,c}
    &= \overline\theta_{c\mid 0}^{(k)}\ee_k 
    + \overline\theta_{c\mid 1}^{(K+k)}\ee_{K+k} 
    + \sum_{1\leq m(\neq k)\leq K\atop \alpha'_m=1} \theta_{c\mid 0}^{(m)} {\ee_m}
    + \sum_{1\leq m(\neq k)\leq K\atop \alpha'_m=0} \theta_{c\mid 1}^{(m)} {\ee_m};
    \\ \notag
    \bo\Delta_{:,c'} &= \zero_{2K}\text{ for $c'\neq c$}.
\end{align}
Here we use $\ee_{j}$ to denote a standard basis vector of dimension $2K$ that takes the value of one in the $j$th entry and zero in all the other entries. 
For any $j\in[2K]$, let $\bo\Delta_{j,c}$ denote the $j$th entry of the vector $\bo\Delta_{:,c}$.
The $\bo\Delta_{:,c}$ defined above is a $(2K)$-dimensional vector, with nonzero entries in the first $K$ entries and the $(K+k)$th entry; the other vectors $\bo\Delta_{:,c'}$ for $c'\neq c$ are zero vectors with the same dimension. 
We vertically stack these column vectors $\bo\Delta_{:,1},\ldots,\bo\Delta_{:,d}$ to obtain a $(2K)\times d$ matrix, and we denote the $j$th row of this matrix as $\bo\Delta_{j,:}$, so $\bo\Delta_{j,:}\in \mathbb R^d$.
Recall that $\bo\Phi^{(j)}$ denotes the $d\times 2^K$ conditional probability table for the observed variable $y_j\in[d]$. The rows of $\bo\Phi^{(j)}$ are indexed by the $d$ categories of $y_j$ and the columns indexed by the $|\{0,1\}^K|=2^K$ different binary latent patterns. 
Recall that the Khatri-Rao product of matrices is the column-wise Kronecker product of them, so the following Khatri-Rao product of the $K+1$ matrices $\bo\Phi^{(1)}, \ldots, \bo\Phi^{(K)}, \bo\Phi^{(K+k)}$ is a  $d^{K+1} \times 2^K$ matrix:
$$
\bigodot_{j\in[K]\cup \{K+k\}} \bo\Phi^{(j)}.
$$
Therefore, the particular response pattern $\bo y_c = (c,c,\ldots,c)$ indexes a row in the above Khatri-Rao product matrix and in fact this row vector can be explicitly written as
$\bigodot_{j\in[K]\cup \{K+k\}} \bo\Phi^{(j)}_{c,:}$, which is $2^K$-dimensional vector with entries $\prod_{j\in[K]\cup \{K+k\}} \bo\Phi^{(j)}_{c,\aaa}$ for $\aaa$ ranging in $\{0,1\}^K$.
We next use the proof technique described in Lemma 3.6 in the main manuscript. With the $d$-dimensional vectors $\bo\Delta_{j,:}$ defined earlier in this paragraph, Lemma 3.6 implies that
$$
\bigodot_{j\in[K]\cup \{K+k\}} \Big(\bo\Phi^{(j)} - \bo\Delta_{j,:} \bcdot \one_{2^K}^\top \Big) \bcdot \nnu = 
\bigodot_{j\in[K]\cup \{K+k\}} \Big(\ov{\bo\Phi}^{(j)} - \bo\Delta_{j,:} \bcdot \one_{2^K}^\top \Big) \bcdot \ov\nnu.
$$
Note that the two Khatri-Rao products on both hand sides of the above display both have size $d^{K+1}\times 2^K$, and the two proportion parameter vectors $\nnu$ and $\ov\nnu$ both have size $2^K\times 1$.
Furthermore, the $d^{K+1}$ rows of these Khatri-Rao products are indexed by
all of the different response patterns when the $K+1$ variables $y_1,\ldots, y_K, y_{K+k}$ each ranges in $[d]$. 
Next we specifically focus on the row in $\bigodot_{j\in[K]\cup \{K+k\}} \Big(\bo\Phi^{(j)} - \bo\Delta_{j,:} \bcdot \one_{2^K}^\top \Big)$ and $\bigodot_{j\in[K]\cup \{K+k\}} \Big(\ov{\bo\Phi}^{(j)} - \bo\Delta_{j,:} \bcdot \one_{2^K}^\top \Big)$ indexed by the response pattern with $y_1=\ldots=y_K=y_{K+k}=c$, and for this row the above equation becomes
\begin{align}\notag
\bigodot_{j\in[K]\cup \{K+k\}} \Big(\bo\Phi^{(j)}_{c,:} - \bo\Delta_{j,c} \bcdot \one_{2^K}^\top \Big) \bcdot \nnu 
&= 
\bigodot_{j\in[K]\cup \{K+k\}} \Big(\ov{\bo\Phi}^{(j)}_{c,:} - \bo\Delta_{j,c} \bcdot \one_{2^K}^\top \Big) \bcdot \ov\nnu,
\\
\label{eq-sumprod1}
\Longleftrightarrow \qquad
\sum_{\aaa\in\{0,1\}^K} 
\underbrace{\prod_{j\in[K]\cup \{K+k\}} \Big({\bo\Phi}^{(j)}_{c,\aaa} -\bo\Delta_{j, c} \Big)}_{\text{denoted as: }t_{\bo y_c,\aaa}} \cdot \nu_{\aaa}
&= 
\sum_{\aaa\in\{0,1\}^K} 
\underbrace{\prod_{j\in[K]\cup \{K+k\}}
\Big(\ov{\bo\Phi}^{(j)}_{c,\aaa} -\bo\Delta_{j, c} \Big)}_{\text{denoted as: }\ov t_{\bo y_c,\aaa}}
\cdot \ov\nu_{\aaa},
\end{align}
where the second line above is just the equivalent restatement of the first line above by following the Khatri-Rao product definition.
With the definitions of $\bo\Delta_{j,c}$ in \eqref{eq-tstar}, we claim that the $\ov t_{\bo y_c,\aaa}$ defined on the right hand side (RHS) of the above \eqref{eq-sumprod1} equals zero for all $\aaa\in\{0,1\}^K$.
This is true because due to the first two terms $\overline\theta_{c\mid 0}^{(k)}\ee_k + \overline\theta_{c\mid 1}^{(K+k)}\ee_{K+k}$ in $\bo\Delta_{:,\,c}$ defined in \eqref{eq-tstar}, the $\ov t_{\bo y_c,\aaa}$ contains a factor of
$$
\Big(\ov{\bo\Phi}^{(k)}_{c,\aaa} - \overline\theta_{c\mid 0}^{(k)}\Big) \Big(\ov{\bo\Phi}^{(K+k)}_{c,\aaa} - \overline\theta_{c\mid 1}^{(K+k)}\Big),
$$
and this factor must be zero because if $\aaa\succeq \cg_k=\cg_{K+k}$, then the second factor $\Big(\ov{\bo\Phi}^{(K+k)}_{c,\aaa} - \overline\theta_{c\mid 1}^{(K+k)}\Big)=0$ , and if $\aaa\nsucceq \cg_k=\cg_{K+k}$, then the first factor $\Big(\ov{\bo\Phi}^{(k)}_{c,\aaa} - \overline\theta_{c\mid 0}^{(k)}\Big)=0$.
Now we have shown $\ov t_{\bo y_c, \aaa} = 0$ for all $\aaa$, so all terms on the RHS of \eqref{eq-sumprod1} are zero and \eqref{eq-sumprod1} now becomes
\begin{align*}
\sum_{\aaa\in\{0,1\}^K} t_{\bo y_c,\aaa} \nu_{\aaa} = 0.
\end{align*}
Now due to the third term $\sum_{1\leq m(\neq k)\leq K\atop \alpha'_m=1} \theta_{c\mid 0}^{(m)}$ in the definition of $\bo\Delta_{:, c}$ in \eqref{eq-tstar}, the term $t_{\bo y_c, \aaa} = \prod_{j\in[K]\cup \{K+k\}} \Big({\bo\Phi}^{(j)}_{c,\aaa} -\bo\Delta_{j, c} \Big)$ contains a factor
$$
\prod_{1\leq m(\neq k)\leq K\atop \alpha'_m=1} \left( {\bo\Phi}^{(j)}_{c,\aaa} - \theta_{c\mid 0}^{(m)} \right) = \prod_{1\leq m(\neq k)\leq K\atop \alpha'_m=1} \left( {\theta}^{(j)}_{c\mid\aaa} - \theta_{c\mid 0}^{(m)} \right),
$$
where we use ${\bo\Phi}^{(j)}_{c,\aaa}$ and ${\theta}^{(j)}_{c\mid\aaa}$ interchangeably to denote the same quantity -- the conditional probability of $y_j=c$ given $\bo a=\aaa$.
The above factor would equal zero if for some $m\in[K], m\neq k$ there is $\alpha_m'=1$ but $\alpha_m=0$. Similarly, due to the fourth term $\sum_{1\leq m(\neq k)\leq K\atop \alpha'_m=0} \theta_{c\mid 1}^{(m)}$ in the definition of $\bo\Delta_{1:2K,c}$ in \eqref{eq-tstar},  the entry $t_{\yy_c, \aaa}$ contains a factor 
$$
\prod_{1\leq m(\neq k)\leq K\atop \alpha'_m=0} \left({\theta}^{(j)}_{c\mid\aaa} - \theta_{c\mid 1}^{(m)} \right),
$$
and this factor would equal zero if for some $m\in[K], m\neq k$ there is $\alpha_m'=0$ but $\alpha_m=1$. Summarizing the above two situations, we have that $t_{\yy_c, \aaa}=0$ if binary pattern $\aaa$ does not exactly equal pattern $\aaa'$ on all but the $k$th entry.
Recall that $\alpha'_k=0$. Denote by $\aaa'+\ee_k$ the binary pattern that equals $\aaa'$ on all but the $k$th entry, with the $k$th entry being one.
Then the previously obtained equality $\sum_{\aaa\in\{0,1\}^K} t_{\yy_c, \aaa} \cdot \nu_{\aaa} = 0$ can be written as
\begin{align*}
    0 &= \sum_{\aaa\in\{0,1\}^K} t_{\yy_c, \aaa} \cdot \nu_{\aaa} \\ \notag
    &= t_{\yy_c,\aaa'}\cdot \nu_{\aaa'} + t_{\yy_c,\aaa'+\ee_k}\cdot \nu_{\aaa'+\ee_k}
    \\[4mm]
    &= \prod_{1\leq m(\neq k)\leq K\atop \alpha'_m=1} \left( \theta_{c\mid 1}^{(m)} - \theta_{c\mid 0}^{(m)} \right) \times 
    \prod_{1\leq m(\neq k)\leq K\atop \alpha'_m=0} \left( \theta_{c\mid 0}^{(m)} - \theta_{c\mid 1}^{(m)} \right) \\ \notag
    & \quad \times \left\{
    \nu_{\aaa'}
    \Big( \theta_{c\mid \aaa'}^{(k)} -  \ov\theta_{c\mid 0}^{(k)} \Big)
    \Big( \theta_{c\mid \aaa'}^{(K+k)} -  \ov\theta_{c\mid 1}^{(K+k)} \Big)
    +
    \nu_{\aaa'+\ee_k}
    \Big( \theta_{c\mid \aaa'+\ee_k}^{(k)} -  \ov\theta_{c\mid 0}^{(k)} \Big)
    \Big( \theta_{c\mid \aaa'+\ee_k}^{(K+k)} -  \ov\theta_{c\mid 1}^{(K+k)} \Big)
    \right\}
    \\[4mm]
    &= \prod_{1\leq m(\neq k)\leq K\atop \alpha'_m=1} 
    \left( \theta_{c\mid 1}^{(m)} - \theta_{c\mid 0}^{(m)} \right) \times 
    \prod_{1\leq m(\neq k)\leq K\atop \alpha'_m=0} \left( \theta_{c\mid 0}^{(m)} - \theta_{c\mid 1}^{(m)} \right) \\
    & \quad \times \left\{
    \nu_{\aaa'}
    \Big( \theta_{c\mid 0}^{(k)} -  \ov\theta_{c\mid 0}^{(k)} \Big)
    \Big( \theta_{c\mid 0}^{(K+k)} -  \ov\theta_{c\mid 1}^{(K+k)} \Big)
    +
    \nu_{\aaa'+\ee_k}
    \Big( \theta_{c\mid 1}^{(k)} -  \ov\theta_{c\mid 0}^{(k)} \Big)
    \Big( \theta_{c\mid 1}^{(K+k)} -  \ov\theta_{c\mid 1}^{(K+k)} \Big)
    \right\},
\end{align*}
where the last equality above follows from two facts (1) $\theta_{c\mid \aaa'}^{(k)} = \theta_{c\mid 0}^{(k)}$ due to $\alpha'_k=0$; and (2) $\theta_{c\mid \aaa'+\ee_k}^{(K+k)} = \theta_{c\mid 1}^{(K+k)}$ due to $(\aaa'+\ee_k)_k=1$. Now note that in the above display, the first two product factors are nonzero because of the following assumption made in the main text
\begin{equation}\notag
\theta^{(j)}_{c\mid 1} > \theta^{(j)}_{c\mid 0},\quad c=1,\ldots,d-1.
\end{equation}
Therefore, we obtain
\begin{align}\label{eq-key}
    \nu_{\aaa'}
    \Big( \theta_{c\mid 0}^{(k)} -  \ov\theta_{c\mid 0}^{(k)} \Big)
    \Big( \theta_{c\mid 0}^{(K+k)} -  \ov\theta_{c\mid 1}^{(K+k)} \Big)
    +
    \nu_{\aaa'+\ee_k}
    \Big( \theta_{c\mid 1}^{(k)} -  \ov\theta_{c\mid 0}^{(k)} \Big)
    \Big( \theta_{c\mid 1}^{(K+k)} -  \ov\theta_{c\mid 1}^{(K+k)} \Big)=0.
\end{align}
Note that the above key equation holds for an arbitrary $\aaa'$ with $\alpha_k'=0$ and also for an arbitrary $c\in\{1,\ldots,d-1\}$. For each $c$ define 
\begin{align}\label{eq-defx}
    x_{0k,c} & = \Big( \theta_{c\mid 0}^{(k)} -  \ov\theta_{c\mid 0}^{(k)} \Big)
    \Big( \theta_{c\mid 0}^{(K+k)} -  \ov\theta_{c\mid 1}^{(K+k)} \Big),\\ \notag
    x_{1k,c} & = \Big( \theta_{c\mid 1}^{(k)} -  \ov\theta_{c\mid 0}^{(k)} \Big)
    \Big( \theta_{c\mid 1}^{(K+k)} -  \ov\theta_{c\mid 1}^{(K+k)} \Big).
\end{align}
We claim that only the first factor of $x_{0k,c}$ and only the second factor of $x_{1k,c}$ can potentially be zero {and explain the reasons below. Take $x_{1k,c}$ for example. 
Lemma \ref{lem-neq} guarantees that $\theta_{c\mid 1}^{(k)} \neq   \ov\theta_{c\mid 0}^{(k)}$ so the first factor $\Big( \theta_{c\mid 1}^{(k)} -  \ov\theta_{c\mid 0}^{(k)} \Big)$ in $x_{1k, c}$ must be nonzero. Therefore, only the second factor $\Big( \theta_{c\mid 1}^{(K+k)} -  \ov\theta_{c\mid 1}^{(K+k)} \Big)$ in $x_{1k, c}$ could potentially be zero. Similarly, Lemma \ref{lem-neq} also guarantees that the second factor in $x_{0k,c}$ is nonzero because $\theta_{c\mid 0}^{(K+k)} \neq  \ov\theta_{c\mid 1}^{(K+k)}$.}
Then we have that
\begin{align}\label{eq-idx1}
&x_{0k,c}=0 ~\text{ if and only if }~
\theta_{c\mid 0}^{(k)} =  \ov\theta_{c\mid 0}^{(k)},\\ \label{eq-idx2}
&x_{1k,c}=0 ~\text{ if and only if }~
\theta_{c\mid 1}^{(K+k)} =  \ov\theta_{c\mid 1}^{(K+k)}.
\end{align}
Then with $\aaa'$ ranging over all the $2^{K-1}$ possible configurations and $c$ ranging over $\{1,\ldots,d-1\}$, Eq.~\eqref{eq-key} implies the following systems of equations hold,

\begin{align}\label{eq-keysys}
    \underbrace{\begin{pmatrix}
        \nu_{{\aaa'}^{(1)}} &~ \nu_{{\aaa'}^{(1)}+\ee_k} \\
        \nu_{{\aaa'}^{(2)}} &~ \nu_{{\aaa'}^{(2)}+\ee_k} \\
        \vdots &~\vdots \\
        \nu_{{\aaa'}^{\left(2^{K-1}\right)}} &~ \nu_{{\aaa'}^{\left(2^{K-1}\right)}+\ee_k} \\
    \end{pmatrix}}_{\text{matrix }\mathbf P^{(k)}\text{ of size }2^{K-1}\times 2}
    \bo\cdot \begin{pmatrix}
        x_{0k,1} &~ x_{0k,2} &~ \cdots &~ x_{0k,d-1} \\
        x_{1k,1} &~ x_{1k,2} &~ \cdots &~ x_{1k,d-1}
    \end{pmatrix}_{2\times (d-1)}
    =
    \zero_{2^{K-1}\times (d-1)},
\end{align}
where ${\aaa'}^{(1)}, {\aaa'}^{(2)}, \ldots, {\aaa'}^{(2^{K-1})}$ represent the $|\{0,1\}^{K-1}|=2^{K-1}$ possible configurations $\aaa'$ can take, all having the $k$th entry being zero. We denote the $2^{K-1}\times 2$ matrix on the left hand side of \eqref{eq-keysys} consisting of $\nu_{\aaa}$'s by $\mathbf P^{(k)}$, and denote its first column by $\mathbf P^{(k)}_{\bcolon,1}$ and its second column by $\mathbf P^{(k)}_{\bcolon,2}$.
The system \eqref{eq-keysys} can be written as 
$$
x_{0k,c}\mathbf P^{(k)}_{\bcolon,1} + x_{1k,c}\mathbf P^{(k)}_{\bcolon,2} = \zero_{2^{K-1}\times 1}, \quad
c=1,\ldots,d-1,
$$
therefore we know that $(x_{0k,1},~ x_{1k,1}) = \cdots = (x_{0k,d-1},~ x_{1k,d-1}) = (0,~0)$ holds if the two $2^{K-1}$-dimensional vectors $\mathbf P^{(k)}_{\bcolon,1}$ and $\mathbf P^{(k)}_{\bcolon,2}$ are linearly independent. 
{Now note that the entries in  $\mathbf P^{(k)}_{\bcolon,1}$ and $\mathbf P^{(k)}_{\bcolon,2}$ are entries $\nu_{\aaa}$ for $\aaa\in\{0,1\}^K$, so the values of $\nnu=(\nu_{\aaa})$ that would yield the two vectors $\mathbf P^{(k)}_{\bcolon,1}$ and $\mathbf P^{(k)}_{\bcolon,2}$ linearly dependent are the zero set of certain polynomials of $\nu_{\aaa}$'s. More specifically, the following set
\begin{align}\label{eq-defnk}
    \mathcal N_{k}=
    \{\nnu:\, \mathbf P^{(k)}_{\bcolon,1} \text{ and } \mathbf P^{(k)}_{\bcolon,2} \text{ are linearly dependent for }\mathbf P^{(k)}\text{ defined in }\eqref{eq-keysys}.\}\end{align}
is a zero set of all the $2\times 2$ sub-determinants of the $2^{K-1}\times 2$ matrix consisting  $\mathbf P^{(k)}_{\bcolon,1}$ and $\mathbf P^{(k)}_{\bcolon,2}$ as the two columns.}
Hence $\mathcal N_k$ forms a algebaric subvariety of the parameter space $\bo\Delta^{2^K-1}$ of $\nnu$ and $\mathcal N_k$ has  measure zero with respect to the Lebesgue measure on $\bo\Delta^{2^K-1}$. Further, recall that as long as $\nnu \not\in \mathcal N_k$ and $\nu_{\aaa}>0$, there is $x_{0k,c}=x_{1k,c}=0$ which implies the identifiability of $\theta^{k}_{c\mid 0}$ and $\theta^{K+k}_{c\mid 1}$ as shown in \eqref{eq-idx1} and \eqref{eq-idx2}. Summarizing the conclusion for all the $k\in[K]$, we have that as long as
\begin{align}\label{eq-nk}
    \nnu \not\in \cup_{k\in[K]} \mathcal N_k,
\end{align}
all the $\theta$-parameters will be identifiable. Since $\cup_{k\in[K]} \mathcal N_k \subseteq \bo\Delta^{2^K-1}$ has measure zero with respect to the Lebesgue measure on $\bo\Delta^{2^K-1}$, we have essentially shown that $\btheta$ are generically identifiable. Further, when $\nnu \not\in \cup_{k\in[K]} \mathcal N_k$ and $\btheta$ are identifiable with $\bo\Phi_j = \ov{\bo\Phi}_j$ for all $j$, we next show that $\nnu$ are also identifiable. Consider the equations given by the first $K$ observed variables,
\begin{align*}
    \Big(\bigodot_{j\in[K]} \bo\Phi^{(j)}\Big) \cdot \nnu =
    \Big(\bigodot_{j\in[K]} \overline{\bo\Phi}^{(j)}\Big) \cdot \overline\nnu
    =
    \Big(\bigodot_{j\in[K]} {\bo\Phi}^{(j)}\Big) \cdot \overline\nnu,
\end{align*}
Given an arbitrary binary pattern $\aaa$ and any $c\in\{1,\ldots,d-1\}$, define 
\begin{align*}
\bo\Delta_{:, c} &= \sum_{1\leq k\leq K\atop \alpha_k=1} \theta^{(k)}_{c\mid 0}\ee_k + \sum_{1\leq k\leq K\atop \alpha_k=0} \theta^{(k)}_{c\mid 1}\ee_k,\quad
\yy_c = c\sum_{k=1}^K \ee_k.
\end{align*}
Note the $\bo\Delta_{:, c}$ and $\yy_c$ are different from the previously defined $\bo\Delta_{:,c}$ and $\yy_c$ in \eqref{eq-tstar}.
For any $\aaa'\in\{0,1\}^K$, denote by $t_{\yy_c, \aaa'}$ and $\ov t_{\yy_c, \aaa'}$ the element in $\bigodot_{j\in[K]} {\bo\Phi}^{(j)}$ and $\bigodot_{j\in[K]} \ov{\bo\Phi}^{(j)}$, respectively, indexed by response pattern $\yy_c$ and latent pattern $\aaa'$.
{According to the definition of the Khatri-Rao product, $t_{\yy_c, \aaa'}$ and $\ov t_{\yy_c, \aaa'}$ have the following expression:
\begin{align*}
t_{\yy_c, \aaa'} = \ov t_{\yy_c, \aaa'}  &= \prod_{1\leq k\leq K\atop \alpha_k=1} (\theta^{(k)}_{c\mid \aaa'}-\theta^{(k)}_{c\mid 0}) \prod_{1\leq k\leq K\atop \alpha_k=0} (\theta^{(k)}_{c\mid \aaa'}-\theta^{(k)}_{c\mid 1}).
\end{align*}
for some $\alpha_k=1$ we have $\alpha'_k=0$, then it will hold that $\theta^{(k)}_{c\mid \aaa'} = \theta^{(k)}_{c\mid 0}$ and hence $\theta^{(k)}_{c\mid \aaa'} -\theta^{(k)}_{c\mid 0} = 0$, which implies $t_{\yy_c, \aaa'} = \ov t_{\yy_c, \aaa'} = 0$.
On the other hand, if for some $\alpha_k=0$ we have $\alpha'_k=1$, then it will hold that $\theta^{(k)}_{c\mid \aaa'} = \theta^{(k)}_{c\mid 1}$ and hence $\theta^{(k)}_{c\mid \aaa'} -\theta^{(k)}_{c\mid 1} = 0$, which also implies $t_{\yy_c, \aaa'} = \ov t_{\yy_c, \aaa'} = 0$.
In summary, as long as $\alpha_k \neq \alpha'_k$ for any $k\in[K]$, it will hold that $t_{\yy_c, \aaa'} = \ov t_{\yy_c, \aaa'} = 0$.
Therefore $t_{\yy_c, \aaa'}$ and $\ov t_{\yy_c, \aaa'}$ are nonzero only if $\aaa'=\aaa$.}
Therefore
\begin{align*}
&~\prod_{1\leq k\leq K\atop \alpha_k=1} \Big(\theta^{(k)}_{c\mid 1} - \theta^{(k)}_{c\mid 0}\Big)
\prod_{1\leq k\leq K\atop \alpha_k=0} \Big(\theta^{(k)}_{c\mid 0} - \theta^{(k)}_{c\mid 1}\Big) \nu_{\aaa}\\
=&~
\prod_{1\leq k\leq K\atop \alpha_k=1} \Big(\theta^{(k)}_{c\mid 1} - \theta^{(k)}_{c\mid 0}\Big)
\prod_{1\leq k\leq K\atop \alpha_k=0} \Big(\theta^{(k)}_{c\mid 0} - \theta^{(k)}_{c\mid 1}\Big) \ov \nu_{\aaa},
\end{align*}
which further gives $\nu_{\aaa}=\ov \nu_{\aaa}$ because $\prod_{1\leq k\leq K\atop \alpha_k=1} \Big(\theta^{(k)}_{c\mid 1} - \theta^{(k)}_{c\mid 0}\Big)
\prod_{1\leq k\leq K\atop \alpha_k=0} \Big(\theta^{(k)}_{c\mid 0} - \theta^{(k)}_{c\mid 1}\Big) \neq 0$. Since $\aaa$ above is arbitrary, we have obtained $\nnu=\ov\nnu$. Thus far we have proven that as long as $\nnu$ satisfies \eqref{eq-nk}, there must be $(\btheta, \nnu)=(\ov\btheta, \ov\nnu)$. This establishes the generic identifiability of all the model parameters and completes the proof of part (a) of the theorem.

\bigskip
\noindent
\textbf{Proof of Part (b) of Theorem \ref{thm-main}.}
We next prove part (b) of the theorem by showing that the two vectors $\mathbf P^{(k)}_{\bcolon,1}$ and $\mathbf P^{(k)}_{\bcolon,2}$ are linearly dependent if and only if 
$\alpha_k$ and other latent variables are statistically independent.
If the two vectors $\mathbf P^{(k)}_{\bcolon,1}$ and $\mathbf P^{(k)}_{\bcolon,2}$ are linearly dependent, then with out loss of generality we can assume $\mathbf P^{(k)}_{\bcolon,2} = \rho\cdot \mathbf P^{(k)}_{\bcolon,1}$ for some $\rho\neq 0$. Then by \eqref{eq-keysys}, there is $\nu_{{\aaa'}^{(\ell)}+\ee_k} = \rho \cdot \nu_{{\aaa'}^{(\ell)}}$ for $\ell=1,\ldots,2^{K-1}$, which implies
\begin{align}\label{eq-crossprod}
    \nu_{{\aaa'}^{(\ell)}+\ee_k}\cdot \nu_{{\aaa'}^{(m)}}=
    \nu_{{\aaa'}^{(m)}+\ee_k}\cdot \nu_{{\aaa'}^{(\ell)}}, ~~\text{for any}~~ 1\leq m,\ell\leq 2^{K-1}.
\end{align}
Denote $(a_1,\ldots,a_{k-1},a_{k+1},\ldots,a_K)=:\bo a_{-k}$. Since all the $a_m$'s are binary, for any $\bo s\in\{0,1\}^{K-1}$ and any $t\in\{0,1\}$ we have
\begin{align}\notag
    &~\mathbb P (\bo a_{-k} = \bo s,\; a_k = 1)\\\notag
    =&~
    \mathbb P (\bo a_{-k} = \bo s,\; a_k = 1) \Big(\sum_{\bo b\in\{0,1\}^K} \nu_{\bo b} \Big)\\\notag
    =&~
    \mathbb P (\bo a_{-k} = \bo s,\; a_k = 1) \Big\{\sum_{\aaa'\in\{0,1\}^K\atop \alpha'_k=0} \Big(\nu_{\aaa'+\ee_k} + \nu_{\aaa'}\Big) \Big\}\\\notag
    =&~
    \sum_{\bo a'\in\{0,1\}^K\atop a'_k=0} 
    \Big\{ \mathbb P (\bo a_{-k} = \bo s,\; a_k = 1)\cdot \nu_{\aaa'+\ee_k} +
    \mathbb P (\bo a_{-k} = \bo s,\; a_k = 1)\cdot \nu_{\aaa'} \Big\}
    \\ \notag
    \stackrel{\eqref{eq-crossprod}}{=}&~ \sum_{\aaa'\in\{0,1\}^K\atop \alpha'_k=0} 
    \Big\{  \mathbb P (\bo a_{-k} = \bo s,\; a_k = 1)\cdot \nu_{\aaa'+\ee_k}
    +
    \mathbb P (\bo a_{-k} = \bo s,\; a_k = 0)\cdot \nu_{\aaa'+\ee_k}\Big\}
    \\ \notag
    =&~ \sum_{\aaa'\in\{0,1\}^K\atop \alpha'_k=0} 
    \mathbb P (\bo a_{-k} = \bo s) \nu_{\aaa'+\ee_k}\\  \label{eq-aaa1}
    =&~\mathbb P (\bo a_{-k} = \bo s) \cdot \mathbb P(a_k=1),
\end{align}
where the last but third equality results from $ \mathbb P (\bo a_{-k} = \bo s,\; a_k = 1)\cdot \nu_{\aaa'} = \mathbb P (\bo a_{-k} = \bo s,\; a_k = 0)\cdot \nu_{\aaa'+\ee_k}$ by \eqref{eq-crossprod}.
Further, we can show that 
\begin{align}\notag
\mathbb P (\bo a_{-k} = \bo s,\; a_k = 0) &= \mathbb P (\bo a_{-k} = \bo s) - \mathbb P (\bo a_{-k} = \bo s,\; a_k = 1) \\ \notag
&= \mathbb P (\bo a_{-k} = \bo s) - \mathbb P (\bo a_{-k} = \bo s) \cdot \mathbb P(a_k=1)\\ \label{eq-aaa2}
&= \mathbb P (\bo a_{-k} = \bo s) \cdot \mathbb P(a_k=0).
\end{align}
The above two conclusions \eqref{eq-aaa1} and \eqref{eq-aaa2} indicate $a_k$ and $\bo a_{-k}$ are statistically independent, that is, $\alpha_k\indep\aaa_{-k}$. Recall the definition in \eqref{eq-defnk} that $\mathcal N_k=\{\nnu:\, \mathbf P^{(k)}_{\bcolon,1} \text{ and } \mathbf P^{(k)}_{\bcolon,1} \text{ defined in \eqref{eq-defnk}}$  $\text{are linearly dependent}.\}$, and now we have shown
\begin{flalign*}
    \text{(C1)}\quad &\nnu \not\in \mathcal N_k
    ~\Longrightarrow~ 
    \theta^{(k)}_{\bcolon}\text{ and } \theta^{(K+k)}_{\bcolon}\text{ are identifiable.} &&
    \\
   \text{(C2)}\quad &\nnu \in \mathcal N_k
    ~\Longrightarrow~ 
    a_k \indep\bo a_{-k}. &&
\end{flalign*}
Next we show that the reverse directions of the above two claims (C1) and (C2) also hold; namely, we next show that 

\begin{flalign*}
    (\tilde{\text{C1}})\quad &
    \text{If }\theta^{(k)}_{\bcolon}\text{ and } \theta^{(K+k)}_{\bcolon} \text{ are identifiable, then } \nnu\not\in\mathcal N_k; &&
    \\
    (\tilde{\text{C2}})\quad &
    \text{If } a_k \indep \bo a_{-k},\text{ then }\nnu\in\mathcal N_k. &&
\end{flalign*}
Suppose $\alpha_k \indep\aaa_{-k}$, then for any $\bo s\in\{0,1\}^{K-1}$ and $z\in\{0,1\}$
there is $\mathbb P (\bo a_{-k} = \bo s, \; a_k=z) = \mathbb P (\bo a_{-k} = \bo s) \cdot \mathbb P(a_k=z)$, which implies the following,
\begin{align*}
\text{for any } \bo s\in\{0,1\}^{K-1},\quad
\begin{cases}
\dfrac{\mathbb P (\bo a_{-k} = \bo s, \; a_k=1)}{\mathbb P (\bo a_{-k} = \bo s)} = \mathbb P(a_k=1) := \rho_1;
\\[4mm]
\dfrac{\mathbb P (\bo a_{-k} = \bo s, \; a_k=0)}{\mathbb P (\bo a_{-k} = \bo s)} = \mathbb P(a_k=0) := \rho_0.
\end{cases}
\end{align*}
\vspace{2mm}

\noindent
Taking the ratio of the above two equalities gives
\begin{align*}
    \frac{\mathbb P (\bo a_{-k} = \bo s, \; a_k=1)}{\mathbb P (\bo a_{-k} = \bo s, \; a_k=0)} = \frac{\rho_1}{\rho_0} ~~\text{for any}~~\bo s\in\{0,1\}^{K-1}.
\end{align*}
Recalling the definition of the $2^{K-1}\times 2$ matrix $\mathbf P^{(k)}$ in \eqref{eq-keysys}, the above equality exactly means the two vectors $\mathbf P^{(k)}_{\bcolon,1}$ and $\mathbf P^{(k)}_{\bcolon,2}$ are linearly dependent. So we have shown $(\tilde{\text{C2}})$ holds.

Finally, to show $(\tilde{\text{C1}})$ holds, it suffices to prove that if $\nnu\in \mathcal N_k$, then $\theta^{(k)}_{\bcolon}$ and  $\theta^{(K+k)}_{\bcolon}$ are not identifiable. To this end, we next explicitly construct alternative parameters  $\ov\theta^{(k)}_{\bcolon}$ and  $\ov\theta^{(K+k)}_{\bcolon}$ that lead to the same distributions of the observables as the true parameters $\theta^{(k)}_{\bcolon}$ and  $\theta^{(K+k)}_{\bcolon}$.
If $\nnu\in\mathcal N_k$, then without loss of generality we can assume there exists $\rho > 0$ such that $\nu_{\aaa'+\ee_k} = \rho \cdot \nu_{\aaa'}$ for any $\aaa'$ with $\alpha'_k=0$. Then equations \eqref{eq-key} now become
\begin{align}\label{eq-rhotheta}
    \Big( \theta_{c\mid 0}^{(k)} -  \ov\theta_{c\mid 0}^{(k)} \Big)
    \Big( \theta_{c\mid 0}^{(K+k)} -  \ov\theta_{c\mid 1}^{(K+k)} \Big)
    +
    \rho\cdot 
    \Big( \theta_{c\mid 1}^{(k)} -  \ov\theta_{c\mid 0}^{(k)} \Big)
    \Big( \theta_{c\mid 1}^{(K+k)} -  \ov\theta_{c\mid 1}^{(K+k)} \Big)=0.
\end{align}
Now consider an arbitrary $\ov \theta_{c\mid 0}^{(k)}$ in a small neighborhood of the true parameter $\theta_{c\mid 0}^{(k)}$.
{We treat the unknown alternative parameter $\ov\theta_{c\mid 1}^{(K+k)}$ as an unknown variable and solve \eqref{eq-rhotheta} for $\ov\theta_{c\mid 1}^{(K+k)}$. The explicit solution is as follows:
$$
\ov\theta_{c\mid 1}^{(K+k)} = 
\theta_{c\mid 1}^{(K+k)} + 
\frac{\Big(\theta_{c\mid 0}^{(k)} - \ov\theta_{c\mid 0}^{(k)}\Big)\Big(\theta_{c\mid 0}^{(K+k)} - \theta_{c\mid 1}^{(K+k)}\Big)}
{\Big(\theta_{c\mid 0}^{(k)} - \ov\theta_{c\mid 0}^{(k)}\Big) + \rho\cdot \Big(\theta_{c\mid 1}^{(k)} - \ov\theta_{c\mid 0}^{(k)}\Big)}.
$$
The sum in the denominator in the expression of $\ov\theta_{c\mid 1}^{(K+k)}$ above is a result of solving the linear equation in \eqref{eq-rhotheta}.}
Note that the above $\ov\theta_{c\mid 1}^{(K+k)}$ and $\ov \theta_{c\mid 0}^{(k)}$ satisfy all the equations in $\Big(\bigodot_{j\in[p]} \bo\Phi^{(j)}\Big) \cdot \nnu =
    \Big(\bigodot_{j\in[p]} \overline{\bo\Phi}^{(j)}\Big) \cdot \overline\nnu$. 
We have thus shown that $\theta_{c\mid 1}^{(K+k)}$ and $\theta_{c\mid 0}^{(k)}$ are not identifiable and prove the previous claim $(\tilde{\text{C1}})$.

In summary, now that we have proven (C1), $(\tilde{\text{C1}})$, (C2), $(\tilde{\text{C2}})$, there are
\begin{align*}
    & \theta^{(k)}_{\bcolon}\text{ and } \theta^{(K+k)}_{\bcolon}\text{ are identifiable.}\\
    \Longleftrightarrow~  &\nnu \not\in\mathcal N_k=\{\nnu:\, \mathbf P^{(k)}_{\bcolon,1} \text{ and } \mathbf P^{(k)}_{\bcolon,1} \text{ are linearly dependent}.\}
    \\
    \Longleftrightarrow~ &
    a_k \indep \bo a_{-k}.
\end{align*}
We have established that conditional probabilities $\theta^{(k)}_{\bcolon}$ and  $\theta^{(K+k)}_{\bcolon}$ are identifiable if and only if $a_k \not\! \indep \bo a_{-k}$, this means the parameters associated with $\ch(a_k)\mid a_k$ are identifiable if and only if $a_k \not\! \indep \bo a_{-k}$.
This completes the proof of Theorem \ref{thm-main}.
\qed

\section{Additional proofs of the theoretical results}\label{sec-addproofs}

\subsection{Proof of Proposition \ref{prop-nece}}
Under the condition of the proposition, we construct a non-identifiable example as follows. 
{Recall that in the BLESS model, each observed variable has at most one latent parent. Therefore, under the condition of the proposition, we can assume without loss of generality that the matrix $\G$ takes the following form:}
\begin{align*}
    \G = 
    \begin{pmatrix}
    1 & \zero\\
    \zero & ~\G^\star
    \end{pmatrix},
\end{align*}
where $\G^\star$ has size $(p-1)\times (K-1)$.
Given arbitrary valid model parameters $(\nnu,\btheta)$, we next construct an alternative set of parameters $(\ov\nnu, \ov\btheta) \neq (\nnu,\btheta)$ such that $(\ov\nnu, \ov\btheta)$ and $(\nnu,\btheta)$ lead to the same distribution of the observed response vector $\bo y$.
Suppose $\ov\theta^{(j)}_{c_j\mid x} = \theta^{(j)}_{c_j\mid x}$ for all $j\in\{2,\ldots,p\}$, $c_j\in[d]$, and $x\in\{0,1\}$.
Then $\mathbb P(\bo y\mid \nnu,\btheta) = \mathbb P(\bo y\mid \ov\nnu, \ov\btheta)$ implies the following equations
\begin{align*}
\forall\aaa^*\in\{0,1\}^{K-1},
\quad
\forall c\in[d],
\quad
    \theta^{(1)}_{c\mid 0} \nu_{(0,\aaa^*)} + \theta^{(1)}_{c\mid 1} \nu_{(1,\aaa^*)} = \ov\theta^{(1)}_{c\mid 0} \ov \nu_{(0,\aaa^*)} + \ov\theta^{(1)}_{c\mid 1}  \ov \nu_{(1,\aaa^*)}.
\end{align*}
For each possible $\aaa^*\in\{0,1\}^{K-1}$, we sum the $d$ equations above for $c=1,\ldots,d$ and further obtain $\nu_{(0,\aaa^*)} + \nu_{(1,\aaa^*)} = \ov \nu_{(0,\aaa^*)} + \ov \nu_{(1,\aaa^*)}$. Therefore the above system of equations are equivalent to the following,
\begin{align*}
\forall\aaa^*\in\{0,1\}^{K-1},
\quad
\begin{cases}
    \nu_{(0,\aaa^*)} + \nu_{(1,\aaa^*)} = \ov \nu_{(0,\aaa^*)} + \ov \nu_{(1,\aaa^*)};
    \\[5mm]
    \theta^{(1)}_{c\mid 0} \nu_{(0,\aaa^*)} + \theta^{(1)}_{c\mid 1} \nu_{(1,\aaa^*)} = \ov\theta^{(1)}_{c\mid 0} \ov \nu_{(0,\aaa^*)} + \ov\theta^{(1)}_{c\mid 1}  \ov \nu_{(1,\aaa^*)},\quad c\in[d].
\end{cases}
\end{align*}
We next set $\ov\theta^{(1)}_{c\mid 0} = \theta^{(1)}_{c\mid 0}$ for all $c\in\{1,\ldots,d\}$, and take the alternative $\ov \theta^{(1)}_{1\mid 1}$ from an arbitrarily small neighborhood of the true parameter $\theta^{(1)}_{1\mid 1}$ with $\ov\theta^{(1)}_{1\mid 1} \neq \theta^{(1)}_{1\mid 1}$. Then
\begin{align}\label{eq-alt}
\begin{cases}
    \ov \nu_{(1,\aaa^*)} =  \nu_{(1,\aaa^*)} \cdot \dfrac{\theta^{(1)}_{1\mid 1} - \theta^{(1)}_{1\mid 0}}{\ov\theta^{(1)}_{1\mid 1} - \theta^{(1)}_{1\mid 0}},
    \quad \forall\aaa^*\in\{0,1\}^{K-1} ;
    \\[8mm]
    \ov \nu_{(0,\aaa^*)} = \nu_{(0,\aaa^*)} + \nu_{(1,\aaa^*)} \cdot \dfrac{\ov\theta^{(1)}_{1\mid 1} - \theta^{(1)}_{1\mid 1}}{\ov\theta^{(1)}_{1\mid 1} - \theta^{(1)}_{1\mid 0}}, \quad\forall\aaa^*\in\{0,1\}^{K-1};
    \\[8mm]
    \ov\theta^{(1)}_{c\mid 1} = \theta^{(1)}_{c\mid 0} + (\theta^{(1)}_{c\mid 1} - \theta^{(1)}_{c\mid 0}) \cdot \dfrac{\ov\theta^{(1)}_{1\mid 1} - \theta^{(1)}_{1\mid 0}}{\theta^{(1)}_{1\mid 1} - \theta^{(1)}_{1\mid 0}},\quad \forall c=2,\ldots,d;
\end{cases}
\end{align}
{We next show that the alternative parameters $\ov \nu_{(1,\aaa^*)}$, $\ov \nu_{(0,\aaa^*)}$, and $\ov\theta^{(1)}_{c\mid 1}$ defined above are different from the true parameters.
First define the ratio terms as follows:
$$
\rho_1 = \dfrac{\theta^{(1)}_{1\mid 1} - \theta^{(1)}_{1\mid 0}}{\ov\theta^{(1)}_{1\mid 1} - \theta^{(1)}_{1\mid 0}},\qquad
\rho_2 = \dfrac{\ov\theta^{(1)}_{1\mid 1} - \theta^{(1)}_{1\mid 1}}{\ov\theta^{(1)}_{1\mid 1} - \theta^{(1)}_{1\mid 0}}.
$$
So we can re-express the alternative parameters $\ov \nu_{(1,\aaa^*)}$, $\ov \nu_{(0,\aaa^*)}$, and $\ov\theta^{(1)}_{c\mid 1}$ defined in \eqref{eq-alt} as 
\begin{align*}
\begin{cases}
    \ov \nu_{(1,\aaa^*)} =  \nu_{(1,\aaa^*)} \cdot \rho_1,
    \quad \forall\aaa^*\in\{0,1\}^{K-1} ;
    \\[8mm]
    \ov \nu_{(0,\aaa^*)} = \nu_{(0,\aaa^*)} + \nu_{(1,\aaa^*)} \cdot \rho_2, \quad\forall\aaa^*\in\{0,1\}^{K-1};
    \\[8mm]
    \ov\theta^{(1)}_{c\mid 1} = \theta^{(1)}_{c\mid 0} + (\theta^{(1)}_{c\mid 1} - \theta^{(1)}_{c\mid 0}) \cdot \rho_2,\quad \forall c=2,\ldots,d;
\end{cases}
\end{align*}
Note that the alternative parameter $\ov \nu_{(1,\aaa^*)}$ differs from the true parameter $\nu_{(1,\aaa^*)}$ by a multiplicative factor  $\rho_1$.
Since we have assumed $\ov \theta^{(1)}_{1\mid 1} \neq \theta^{(1)}_{1\mid 1}$, the ratio $\rho_1\neq 1$ which means $\ov \nu_{(1,\aaa^*)} =  \rho\cdot\nu_{(1,\aaa^*)}  \neq \nu_{(1,\aaa^*)}$.
Further, $\ov\theta^{(1)}_{1\mid 1} \neq \theta^{(1)}_{1\mid 1}$ also means that the ratio $\rho_2 \neq 0$, which implies $\ov \nu_{(0,\aaa^*)} = \nu_{(0,\aaa^*)} +  \rho_2\cdot \nu_{(1,\aaa^*)} \neq \nu_{(0,\aaa^*)}$.
Finally, our model assumption $\theta^{(1)}_{1\mid 1} \neq \theta^{(1)}_{1\mid 0}$ also means $\rho_2\neq 1$, which implies $$\ov\theta^{(1)}_{c\mid 1} = \theta^{(1)}_{c\mid 0} + \rho_2\cdot (\theta^{(1)}_{c\mid 1} - \theta^{(1)}_{c\mid 0}) \neq \theta^{(1)}_{c\mid 0} + (\theta^{(1)}_{c\mid 1} - \theta^{(1)}_{c\mid 0})  = \theta^{(1)}_{c\mid 1}.$$
Now we have shown that $\ov \nu_{(1,\aaa^*)} \neq  \nu_{(1,\aaa^*)}$, $\ov \nu_{(0,\aaa^*)} \neq  \nu_{(0,\aaa^*)}$, and $ \ov\theta^{(1)}_{c\mid 1} \neq \theta^{(1)}_{c\mid 1}$ for $c=2,\ldots,d$.}
Note that the alternative parameter $\ov \theta^{(1)}_{1\mid 1}$ can be chosen from an arbitrarily small neighborhood of the true parameter $\theta^{(1)}_{1\mid 1}$, so we have proven that even local identifiability fails to hold in the considered setting.
This completes the proof of Proposition \ref{prop-nece}.
\qed

\subsection{Proof of Theorem \ref{thm-graph}}
We prove the theorem in two steps.
\noindent\textbf{Step 1.}
In this step we prove the following lemma.
\begin{lemma}\label{lem-gg}
Suppose $\G = (\I_K,\; \I_K)^\top$, which vertically stacks two identity submatrices $\I_K$. Consider that $(\G, \btheta, \nnu)$ and $(\bar\G, \bar\btheta, \bar\nnu)$ lead to the same distribution of the observed vector $\bo y$.
For an arbitrary $h\in[K]$, if there exists two sets $\mathcal A\subseteq [K]\setminus\{h\}$ and $\mathcal B\subseteq\{K+1,\ldots,J\}$ such that $\G$ satisfies
\begin{align*}
    \max_{m\in \mathcal B}~ g_{m,h} &= 0,\\
    \max_{m\in \mathcal B}~ g_{m,k} &=1 \;\text{for all}\; k\in \mathcal A,
\end{align*}
then $\bar\G$ must satisfy
$
\vee_{k\in\mathcal A}~ \bar \cg_k \nsucceq \bar\cg_h.
$
\end{lemma}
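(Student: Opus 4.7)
The plan is to prove the statement by contradiction. Under the star-forest constraint imposed on $\bar{\G}$ by the BLESS model, each row of $\bar{\G}$ has exactly one nonzero entry, and since $h \notin \mathcal{A}$, the desired inequality $\vee_{k \in \mathcal{A}} \bar{\cg}_k \nsucceq \bar{\cg}_h$ is equivalent to $\bar{\cg}_h \neq \zero$: if some $y_j$ satisfied $\bar{g}_{j,h}=1$, then $\bar{g}_{j,k}=0$ for all $k \neq h$ and in particular for every $k \in \mathcal{A}$, so row $j$ would already witness the non-dominance. Therefore I will assume, toward contradiction, that $\bar{\cg}_h = \zero$, so that every observed $y_j$ has a parent $\bar{k}_j \neq h$ under $\bar{\G}$, and derive a contradiction with the equality of observed distributions.

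The main tool will be the algebraic identity \eqref{eq-trans} provided by Lemma~\ref{lem-poly}, applied to the index subset $S := \{h\} \cup \mathcal{B}$. For each $j \in S$ I will choose $\bo{\Delta}_j = (\bar{\theta}^{(j)}_{1\mid 0},\ldots,\bar{\theta}^{(j)}_{d-1\mid 0},\,0)^\top$, so that every column of $\bar{\bo{\Phi}}^{(j)} - \bo{\Delta}_j \one_{2^K}^\top$ indexed by a pattern $\aaa$ with $\bar{\alpha}_{\bar{k}_j}=0$ has its first $d-1$ entries equal to zero. Consequently, at any response entry indexed by $(c_j)_{j \in S}$ with all $c_j \in [d-1]$, the right-hand side of \eqref{eq-trans} is supported only on patterns $\aaa$ in the cylinder set $\bar{T} = \{\aaa : \bar{\alpha}_{\bar{k}_j}=1\ \forall j \in S\}$, which—critically—does not constrain $\alpha_h$. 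It therefore factorizes as $\prod_{j \in S}(\bar{\theta}^{(j)}_{c_j\mid 1} - \bar{\theta}^{(j)}_{c_j\mid 0}) \cdot \bar{P}$, with $\bar{P} := \sum_{\aaa \in \bar{T}} \bar{\nu}_{\aaa}$ a positive scalar independent of $(c_j)$.

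On the left-hand side, the structure of the true $\G = (\I_K,\,\I_K)^\top$ together with the hypotheses on $\mathcal{A}$ and $\mathcal{B}$ dictates that $y_h$ has parent $\alpha_h$, while each $y_m$ for $m \in \mathcal{B}$ has a parent $\alpha_{k_m}$ with $k_m \in \mathcal{A} \setminus \{h\}$, and the cover condition $\max_{m \in \mathcal{B}} g_{m,k}=1$ ensures that every $k \in \mathcal{A}$ arises in this way. Grouping the LHS sum by $\alpha_h \in \{0,1\}$ yields
\begin{align*}
\sum_{\aaa_{-h}} \prod_{j \in \mathcal{B}}\bigl(\theta^{(j)}_{c_j\mid \alpha_{k_j}} - \bar{\theta}^{(j)}_{c_j\mid 0}\bigr) \Bigl[\bigl(\theta^{(h)}_{c_h\mid 0} - \bar{\theta}^{(h)}_{c_h\mid 0}\bigr)\nu_{(\aaa_{-h},0)} + \bigl(\theta^{(h)}_{c_h\mid 1} - \bar{\theta}^{(h)}_{c_h\mid 0}\bigr)\nu_{(\aaa_{-h},1)}\Bigr],
\end{align*}
which must equal the rank-one RHS for every choice of $(c_j)_{j \in S} \in [d-1]^{|S|}$. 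Varying the multi-index $(c_j)_{j \in \mathcal{B}}$ and invoking Lemma~\ref{lem-neq} (so that the $\mathcal{B}$-factors are nonzero), the cover condition supplies sufficiently many linearly independent $\mathcal{B}$-factor tensors to isolate the bracketed expression; comparing its $c_h$-dependence—which is a linear combination of the two independent functions $\theta^{(h)}_{c_h\mid 0}$ and $\theta^{(h)}_{c_h\mid 1}$—with the single-function dependence $\bar{\theta}^{(h)}_{c_h\mid 1} - \bar{\theta}^{(h)}_{c_h\mid 0}$ on the RHS forces either $\theta^{(h)}_{c_h\mid 1} = \theta^{(h)}_{c_h\mid 0}$ (contradicting \eqref{eq-flip}) or $\nu_{(\aaa_{-h},0)} = \nu_{(\aaa_{-h},1)} = 0$ (contradicting the positivity of $\nnu$).

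The main obstacle will be making the tensor-rank/coefficient-matching step rigorous. The cover condition on $\mathcal{B}$ is precisely what guarantees, after applying \eqref{eq-trans} both for $S$ and for proper subsets $S' \subsetneq S$ (as permitted by the subset extension in Lemma~\ref{lem-poly}) and recombining, that the $\mathcal{B}$-contributions span a large enough space to separate the $\alpha_h = 0$ and $\alpha_h = 1$ pieces on the LHS. Once this separation is achieved, the contradiction is immediate; without the cover condition, the separation would fail, which clarifies why the hypothesis is stated precisely as it is.
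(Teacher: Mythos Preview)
Your opening reformulation misreads the notation. In this paper $\cg_j$ denotes the $j$th \emph{row} of $\G$ (the parent-indicator of $y_j$), not the $j$th column. With $\bar\G$ a star-forest matrix, each $\bar\cg_k$ is a standard basis vector in $\{0,1\}^K$, so $\vee_{k\in\mathcal A}\bar\cg_k\succeq\bar\cg_h$ means that under $\bar\G$ the observed variable $y_h$ shares its latent parent with some $y_k$, $k\in\mathcal A$. Your claimed equivalence with ``$\bar\cg_h\neq\zero$'' is vacuous under the row reading (every row is nonzero), and the column reading you actually argue --- that $\alpha_h$ has at least one child under $\bar\G$ --- is a genuinely different and weaker statement. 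Consequently the contradiction hypothesis you carry through the rest of the sketch (``every $y_j$ has $\bar k_j\neq h$'') is not the negation of the lemma's conclusion, and the argument, even if completed, would not prove the lemma.

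Beyond the notational issue, the strategy diverges from the paper's in a way that leaves the hard step unfilled. The paper's proof assumes $\vee_{k\in\mathcal A}\bar\cg_k\succeq\bar\cg_h$ and chooses $\bo\Delta$ so that the $\bar{\bo\Phi}$-side row is \emph{exactly zero}: it puts $\bar\theta^{(h)}_{c\mid 1}$ at index $h$ and $\bar\theta^{(k)}_{c\mid 0}$ at indices $k\in\mathcal A$, and the dominance assumption forces the product $(\bar\theta^{(h)}_{c\mid\aaa}-\bar\theta^{(h)}_{c\mid 1})\prod_{k\in\mathcal A}(\bar\theta^{(k)}_{c\mid\aaa}-\bar\theta^{(k)}_{c\mid 0})$ to vanish for every $\aaa$. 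It then proceeds in two passes: first, using all indices $m\in\{K+1,\ldots,2K\}$ with shifts $\theta^{(m)}_{c\mid 0}$ to pin down $\aaa=\one_K$ on the true side and conclude $\theta^{(h)}_{c\mid 1}=\bar\theta^{(h)}_{c\mid 1}$; second, restricting to $m\in\mathcal B$, so the true side is supported on the nonempty set $\{\aaa:\alpha_h=0,\ \aaa\succeq\vee_{m\in\mathcal A\cup\mathcal B}\cg_m\}$ (nonempty precisely because $\max_{m\in\mathcal B}g_{m,h}=0$), yielding a nonzero product and the contradiction via Lemma~\ref{lem-neq}. Your approach instead leaves the $\bar{\bo\Phi}$-side nonzero and tries to match its rank-one $c_h$-dependence against a two-term combination on the true side; you yourself flag the ``tensor-rank/coefficient-matching step'' as the main obstacle, and indeed without the correct contradiction hypothesis there is no mechanism to force that separation.
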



Please see the proof of Lemma \ref{lem-gg} in the Supplementary Material.

\bigskip
\noindent\textbf{Step 2.}
{First consider the case where $\G = (\I_K,\; \I_K)^\top$; extension to cases where $\G$ contains more than $2K$ rows will be discussed in the end of this Step 2.}
We next show $\bar{\G} = (\I_K,\; \I_K)^\top$ holds up to a column permutation.
{Let $\mathcal B=\{K+1,\ldots,2K\}\setminus\{K+h\}$} and $\mca_h=[K]\setminus\{h\}$ for an arbitrary index $h\in[K]$. Then the condition in Lemma \ref{lem-gg} is satisfied and 
$$\bigvee_{k\in\mathcal A_h}~ \bar \cg_k \nsucceq \bar\cg_h,$$
which implies that the row vector $\bar\cg_h$ contains an entry of ``1'' in some column $q_h$ with all the $\bar\cg_k$ in $\mathcal A_h$ having ``0'' in this column $q_h$.
Since the above holds for all the $h\in[K]$, we obtain that the $K$ row vectors $\bar g_1,\ldots,\bar g_K$ contains ``1''s in $K$ different columns. This exactly implies that $\bar\G_{1:K,\cdot}$ equals the identity matrix $\I_K$ up to a column permutation. Since the first $K$ rows and the second $K$ rows of $\G$ are both $\I_K$, by symmetry to the above deduction we can also obtain that $\bar\G_{(K+1):(2K),\cdot}$ equals $\I_K$ up to a column permutation. 

Now it only remains to show that the column permutations of $\bar\G_{1:K,\cdot}$ and $\bar\G_{(K+1):(2K),\cdot}$ are the same.
Suppose $\bar\cg_k = \bar\cg_{K+k'}$ for some $k,k'\in[K]$.
Define 
\begin{align*}
    \bo\Delta_{1:p, c} &= \theta^{(k)}_{c\mid 0} \ee_k + \theta^{(K+k')}_{c\mid 1} \ee_{K+k'},\quad \bo\Delta_{1:p, c'} = \zero_{p}\text{ for }c'\neq c;\\
    \yy_c &= c(\ee_k + \ee_{K+k'}).
\end{align*}
Now let $\bo\Delta_{j,:}$ denote the $d$-dimensional vector with entries $(\bo\Delta_{j,1},\ldots,\bo\Delta_{j,d})$.
With this definition, we claim that the row vector corresponding to response pattern $\yy_c$ of $\bigodot_{j\in[p]} \Big(\ov{\bo\Phi}^{(j)}-\bo\Delta_{j,:}\bcdot\one^\top_{2^K} \Big)$ must be a zero-vector.
This is because any entry in this row must contain a factor of 
$$
\Big(\ov\theta^{(k)}_{c\mid\aaa} - \overline\theta_{c\mid 0}^{(k)}\Big) \Big(\ov\theta^{(K+k')}_{c\mid\aaa} - \overline\theta_{c\mid 1}^{(K+k')}\Big),
$$
and this factor must be zero because if $\ov\theta^{(k)}_{c\mid\aaa} - \overline\theta_{c\mid 0}^{(k)} \neq 0$ then $\aaa\succeq \cg_k=\cg_{K+k'}$, and then $\ov\theta^{(K+k')}_{c\mid\aaa} - \overline\theta_{c\mid 1}^{(K+k')} = 0$ must hold.
Now that $\bigodot_{j\in[p]} \Big(\ov{\bo\Phi}^{(j)}-\bo\Delta_{j,:}\bcdot\one^\top_{2^K} \Big)_{\yy_c,\bcolon}$ is a zero-vector, \eqref{eq-trans} gives that 
\begin{align*}
    0 &=
    \bigodot_{j\in[p]} \Big(\ov{\bo\Phi}^{(j)}-\bo\Delta_{j,:}\bcdot\one^\top_{2^K} \Big)_{\yy_c,\bcolon}\cdot \ov\nnu 
    \\
    &
    =
    \bigodot_{j\in[p]} \Big({\bo\Phi}^{(j)}-\bo\Delta_{j,:}\bcdot\one^\top_{2^K} \Big)_{\yy_c,\bcolon}\cdot \nnu \\
    &=
    \left(\theta^{(k)}_{c\mid 1} - \theta^{(k)}_{c\mid 0}\right)
    \left(\theta^{(K+k')}_{c\mid 0} - \theta^{(K+k')}_{c\mid 1}\right) 
    \Big(\sum_{\aaa\succeq \cg_k\atop \aaa\nsucceq\cg_{K+k'}}\nu_{\aaa}\Big).
\end{align*}
If the set $\mathcal M := \{\aaa\in\{0,1\}^K:\,\aaa\succeq \cg_k, \aaa\nsucceq\cg_{K+k'}\}$ is nonempty, then the above equation gives a contradiction. This means $\mathcal M$ must be an empty set, which implies that $\cg_{K+k'} = \cg_{k}$ must hold. Considering the true $\mathbf G=(\I_K, \I_K)^\top$, we have that $k'=k$ must hold. Now we have shown that as long as $\bar\cg_k = \bar\cg_{K+k'}$, there is $k=k'$. This shows $\bar\G_{1:K,\cdot}=\bar\G_{(K+1):(2K),\cdot}$ holds. 

{Next, we consider the case where $\G$ contains more than $2K$ rows with $\G = (\I_K; \I_K; \G^{\star\top})^\top$. (i.e., some latent variable has more than two observed children). Then for any $j=2K+1,\ldots,p$, suppose $y_j$'s latent parent is $a_k$ so $\bo g_j = \ee_k$, where $\ee_k$ here is a $K$-dimensional standard basis vector. Then we only need to change the order of this variable $y_j$ and variable $y_k$ and the graphical matrix corresponding to the following $2K$ variables is still $(\I_K; \I_K)^\top$:
$$
\underbrace{y_1,\ldots,y_{k-1}, y_j, y_{k+1}, \ldots, y_K}_{\text{first $K$ variables forming $\I_K$}}, \underbrace{y_{K+1},\ldots, y_{2K};}_{\text{second $K$ variables forming $\I_K$}}
$$
denote the graphical matrix corresponding to the above $2K$ variables by $\G_{[1:k-1,j,k+1:2K],:}$. Then following exactly the same argument as in the previous paragraph when $\G = (\I_K; \I_K)^\top$, we can get $\ov\G_{[1:k-1,j,k+1:2K],:} = \G_{[1:k-1,j,k+1:2K],:}$, which proves $\ov{\bo g}_j = \bo g_j$. This shows that when $\G = (\I_K; \I_K; \G^{\star\top})^\top$, we still have $\ov\G = \G$ and the measurement graph structure is identifiable.}
Now we have completed the proof of Theorem \ref{thm-graph}.


\qed

\subsection{Proof of Proposition \ref{prop-3chi}}
Under the assumption that each latent variable has three children, we show identifiability in a similar fashion as the proof of Theorem 4 in \cite{allman2009} by using Kruskal's theorem.
{Note that \cite{allman2009} considered a general parameter space {without any inequality constraints} of model parameters and established generic identifiability.
But in our model, we have assumed the following inequality constraints in Equation (3) in the main text:
$$
\theta^{(j)}_{c_j\mid 1} > \theta^{(j)}_{c_j\mid 0} ~\text{ for }~ j\in[p],~ c_j\in[d-1].
$$
By examining the proof of Theorem 4 in \cite{allman2009} and carefully adapting Kruskal's Theorem to our setting, we find that the above inequality constraints on the $\theta$-parameters exactly rule out the non-identifiable case in the parameter set. 
Therefore, we are able to obtain the strict identifiability result in the following proof.}

Now we proceed with the proof of the proposition.
Under the assumption that each latent variable has at least three children variables, suppose without loss of generality that {$\G=(\I_K,\;\I_K,\;\I_K,\;\G^{\star\top})^\top$}, where the submatrix $\G^\star$ can take an arbitrary form. Suppose the alternative parameters $\ov\btheta$, $\ov\nnu$ associated with a potentially different $\ov\G$ lead to the same distribution of the $p$ observed variables.
Group the first $K$ observed variables $y_1,\ldots,y_K$ into one discrete variable with $d^K$ categories and denote it by $z_1$, then each of the $d^K$ possible configurations of the vector $\tilde{\bo c}=(y_1,\ldots,y_K)$ corresponds to one category that $z_1$ can take.
Similarly group $y_{K+1},\ldots,y_{2K}$ into another variable $z_2$, and group $y_{2K+1},\ldots,y_{3K}$ into another variable $z_3$. 
Then given latent pattern $\aaa$, the conditional probability table of $z_1$, $z_2$, $z_3$ each has size $d^K \times 2^K$; denote such a table by ${\bo\Psi}_m$. Based on the star-forest dependence graph structure it is not hard to deduct that each such $d^K \times 2^K$ table can be written as 
$$
{\bo\Psi}_1
=\bigotimes_{j=1}^K \begin{pmatrix}
\theta^{(j)}_{1\mid 0} ~&~ \theta^{(j)}_{1\mid 1} \\[2mm]
\vdots ~&~ \vdots\\[2mm]
\theta^{(j)}_{d\mid 0} ~&~ \theta^{(j)}_{d\mid 1}
\end{pmatrix},
\quad
{\bo\Psi}_2
=\bigotimes_{j=K+1}^{2K} \begin{pmatrix}
\theta^{(j)}_{1\mid 0} ~&~ \theta^{(j)}_{1\mid 1} \\[2mm]
\vdots ~&~ \vdots\\[2mm]
\theta^{(j)}_{d\mid 0} ~&~ \theta^{(j)}_{d\mid 1}
\end{pmatrix},
\quad
{\bo\Psi}_3
=\bigotimes_{j=2K+1}^{3K} \begin{pmatrix}
\theta^{(j)}_{1\mid 0} ~&~ \theta^{(j)}_{1\mid 1} \\[2mm]
\vdots ~&~ \vdots\\[2mm]
\theta^{(j)}_{d\mid 0} ~&~ \theta^{(j)}_{d\mid 1}
\end{pmatrix}.
$$
Recall the assumption \ref{eq-flip} that $\theta^{(j)}_{c\mid 1} > \theta^{(j)}_{c\mid 0}$ for all $j\in[p]$ and $c\in[d-1]$, which implies $\theta^{(j)}_{d\mid 1} < \theta^{(j)}_{d\mid 0}$. Therefore the following inequality always holds for any $c\in[d-1]$,
$$
\theta^{(j)}_{c\mid 0}\cdot \theta^{(j)}_{d\mid 1} - 
\theta^{(j)}_{c\mid 1}\cdot \theta^{(j)}_{d\mid 0} < 0,
$$
which implies each $d\times 2$ factor matrix in the definition of ${\bo\Psi}_1$, ${\bo\Psi}_2$, and ${\bo\Psi}_3$ has full column rank 2. Since the Kronecker product of full-rank matrices is still full-rank, we obtain that each of ${\bo\Psi}_1$, ${\bo\Psi}_2$, ${\bo\Psi}_3$ has full column rank $2^K$. 

Next further group the variable $z_3$ and all the remaining variables $y_{3K+1}, \ldots, y_{p}$ (if they exist) into another discrete variable $z_4$ with $d^{p-2K}$ categories. Denote the conditional probability table of $z_4$ by ${\bo\Psi}_4$, which has size $d^{p-2K} \times 2^K$. Then by definition there is
$$
{\bo\Psi}_4 = {\bo\Psi}_3 \bigodot 
\underbrace{\bo\Phi_{3K+1} \bigodot \bo\Phi_{3K+2} \cdots \bigodot \bo\Phi_{p}}_{p-3K\text{ matrices}}.
$$
Since every matrix in the above Khatri-Rao product is a conditional probability table with each column summing to one, the  ${\bo\Psi}_3$ can be obtained by summing appropriate rows of ${\bo\Psi}_4$. This indeed indicates that the column rank of ${\bo\Psi}_4$ will not be smaller than that of ${\bo\Psi}_3$, so ${\bo\Psi}_4$ also has full rank $2^K$. 
Note that for alternative parameters $\bo\Phi_j$ there is
\begin{align*}
    \Big({\bo\Psi}_1 \bigodot {\bo\Psi}_2 \bigodot {\bo\Psi}_4 \Big) \bcdot \nnu = 
    \Big(\ov{\bo\Psi}_1 \bigodot \ov{\bo\Psi}_2 \bigodot \ov{\bo\Psi}_4 \Big) \bcdot \ov\nnu 
\end{align*}
Now we invoke Kruskal's theorem \citep{kruskal1977three} as follows on the uniqueness of three-way tensor decompositions.
Let $\mathbf M_1, \mathbf M_2, \mathbf M_3$ be three matrices of size $a_m\times r$ for $m=1,2,3$,  and $\mathbf N_1, \mathbf N_2, \mathbf N_3$ be three matrices each with $r$ columns. Suppose $\bigodot_{m=1}^3 \mathbf M_m  \cdot \one = \bigodot_{m=1}^3 \mathbf N_m \cdot \one$. Denote by $\rank_{\text{Kr}}(\mathbf M)$ the Kruskal rank of a matrix $\mathbf M$, which is the maximum number $R$ such that every $R$ columns of $\mathbf M$ are linearly independent.
If $\rank_{\text{Kr}}(\mathbf M_1) + \rank_{\text{Kr}}(\mathbf  M_2) + \rank_{\text{Kr}}(\mathbf  M_3) \geq 2r + 2$, then Kruskal's theorem guarantees that there exists a permutation matrix $\mathbf P$ and three invertible diagonal matrices $\mathbf D_m$ with $\mathbf D_1 \mathbf D_2 \mathbf D_3 =\mathbf I_r$ and $\mathbf N_m =\mathbf  M_m \mathbf D_m \mathbf P$ for each $m=1,2,3$.

Based on Kruskal's theorem stated above, we can show that $\bo\Psi_m=\ov{\bo\Psi}_m$ for $m=1,2,4$ and $\nnu=\ov\nnu$ up to a column latent class permutation. 
Finally, note that both individual entries $\ov\theta_{c\mid 1}^{(j)}$, $\ov\theta_{c\mid 0}^{(j)}$, and the graphical matrix $\ov\G$ can read off from the $\ov{\bo\Psi}_m$. This implies the $\ov\btheta$ and $\ov\G$ must also equal the $\btheta$ and $\G$ up to a latent variable permutation. The proof is complete. 
\qed


\subsection{Proof of Proposition \ref{thm-cdina-str}}
We extend the proof of Theorem 1 in \cite{gu2019dina} from the binary-response DINA model to the CatDINA model.
Fix an arbitrary response category $c\in[d-1]$. We can group all of the other categories in $[d]\setminus\{c\}$ into one big category, so that the model reduces to the binary-response DINA model. Given a $\G$ matrix, consider true parameters $(\bo\theta, \nnu)$ satisfying \eqref{eq-nu-positive} and \eqref{eq-flip} and alternative parameters $(\ov{\bo\theta}, \ov\nnu)$. Define
\begin{align*}
    s_j = 1- \theta^{(j)}_{c\mid 1},\quad u_j = \theta^{(j)}_{c\mid 0},
\end{align*}
then $(s_j, u_j)$ can be viewed as the new slipping and guessing parameters under the reduced binary-response DINA model.
For the alternative parameters, similarly define $\ov s_j = 1- \ov \theta^{(j)}_{c\mid 1}$ and $\ov u_j = \ov \theta^{(j)}_{c\mid 0}$.
{We next show that if $(\bo\theta, \nnu)$ and $(\ov{\bo\theta}, \ov\nnu)$ lead to the same distribution of the observed vector $\bo y$, then it implies certain equations for the new binary-response DINA model. 
Given any response vector $\bo y\in\times_{j=1}^p [d]$, we introduce surrogate response variables $\mathbf R=(R_1,\ldots,R_p)\in\{0,1\}^p$ as follows:
\begin{align*}
    R_j = \begin{cases}
        1, & \text{if } y_j = c;\\
        0, & \text{if } y_j \neq c.
    \end{cases}
\end{align*}
Then, the fact that $(\bo\theta, \nnu)$ and $(\ov{\bo\theta}, \ov\nnu)$ lead to the same distribution of $\bo y$ implies that the following holds for any pattern $\bo r\in\{0,1\}^p$:
\begin{align*}
    \sum_{\aaa\in\{0,1\}^K}\MP(\bo R= \bo r\mid\aaa) \nu_{\aaa} = 
    \sum_{\aaa\in\{0,1\}^K}\ov\MP(\bo R=\bo r\mid\aaa) \ov\nu_{\aaa},
\end{align*}
which can be further written as follows:}
\begin{align*}
&~ \sum_{\aaa\in\{0,1\}^K} \nu_{\aaa} \prod_{j=1}^p \Big[(1-s_j)^{\mathbbm{1}(\aaa\succeq\bo g_j)} u_j^{\mathbbm{1}(\aaa\nsucceq\bo g_j)} \Big]^{\mathbbm{1}(r_j = c)} \Big[1-(1-s_j)^{\mathbbm{1}(\aaa\succeq\bo g_j)} u_j^{\mathbbm{1}(\aaa\nsucceq\bo g_j)} \Big]^{\mathbbm{1}(r_j \neq c)}
\\
=&~
\sum_{\aaa\in\{0,1\}^K} \ov\nu_{\aaa} \prod_{j=1}^p \Big[(1-\ov s_j)^{\mathbbm{1}(\aaa\succeq\bo g_j)} \ov u_j^{\mathbbm{1}(\aaa\nsucceq\bo g_j)} \Big]^{\mathbbm{1}(r_j = c)} \Big[1-(1-\ov s_j)^{\mathbbm{1}(\aaa\succeq\bo g_j)} \ov u_j^{\mathbbm{1}(\aaa\nsucceq\bo g_j)} \Big]^{\mathbbm{1}(r_j \neq c)}.
\end{align*}
Now note that the above system of $2^p$ equations are exactly the same as the $2^p$ equations under the binary-response DINA model. 
Also, our parameter assumptions \eqref{eq-nu-positive} and \eqref{eq-flip} are consistent with the assumptions in \cite{gu2019dina} that $\nu_{\aaa}>0$ for all $\aaa$ and $1-s_j>u_j$ for all $j\in[p]$.
Therefore, when the $\G$ matrix satisfy the C-R-D conditions, we have $\nu_{\aaa} = \ov\nu_{\aaa}$ for all $\aaa\in\{0,1\}^K$, $s_j = \ov s_j$ and $u_j = \ov u_j$ for all $j\in[p]$ following the conclusion in Theorem 1 in \cite{gu2019dina}. This proves the identifiability of $\nnu$ and $\Big\{\theta^{(j)}_{c\mid 1}, \theta^{(j)}_{c\mid 0}: j\in[p]\Big\}$ in the CatDINA model. Since the response category $c$ chosen above is an arbitrary category, we have shown that all the $\theta$-parameters  $\Big\{\theta^{(j)}_{c\mid 1}, \theta^{(j)}_{c\mid 0}: j\in[p], c\in[d]\Big\}$ are identifiable. This shows the strict identifiability of all parameters in the CatDINA model and completes the proof of the proposition.
\qed

\subsection{Proof of Theorem \ref{thm-cdina-gen}}

\textbf{Proof of part (a) about the generic identifiability conclusion.}
Below we rewrite the form of the $\G$ matrix stated in the theorem,
\begin{align*}
\G = \begin{pmatrix}
1 & \zero \\
1 & \uu \\
\hline
\zero & \G^*
\end{pmatrix}.
\end{align*}
Under the above $\G$ matrix, suppose true parameters $(\btheta, \nnu)$ and alternative parameters $(\ov\btheta, \ov\nnu)$ give rise to the same distribution of the observed vector $\bo y$.
Fix a response category $c\in[d-1]$.
For notational convenience, denote by ${\mathbb{P}}(\cdot)$ the probability distribution under the true parameters, and denote by $\overline{\mathbb{P}}(\cdot)$ the probability distribution under the alternative ones.
For any binary pattern $\aaa\in\{0,1\}^K$, denote
$$
\theta^{(j)}_{c\mid \aaa}=\MP(y_j = c\mid\bo a = \aaa),\quad \ov\theta^{(j)}_{c\mid \aaa} = \ov\MP(y_j = c\mid\bo a = \aaa).
$$
For a $(K-1)$-dimensional binary vector $\aaa^*\in\{0,1\}^{K-1}$,  let $(0,\aaa^*)$, $(1,\aaa^*) \in\{0,1\}^K$ denote two $K$-dimensional binary vectors.

Recall that the $(p-2)\times (K-1)$ submatrix $\G^*$ satisfies the C-R-D condition by our assumption in theorem.
Note that when fixing $j\in[p]$ and $c\in[d]$ and varying $\aaa$, $\Big\{\theta^{(j)}_{c\mid \aaa}:\aaa\in\{0,1\}^J\Big\}$ can only take two different values: either $\theta^{(j)}_{c\mid 0}$ or $\theta^{(j)}_{c\mid 1}$.
Since the $\G$ matrix satisfies $g_{3,1} = g_{4,1} = \cdots = g_{p,1} = 0$, we have that for all $j\geq 3$, the observed variable $y_j$ does not depend on the first latent variable $a_1$. As a result, the conditional probability $\theta^{(j)}_{c\mid \aaa}=\MP(y_j = c\mid \bo a = \aaa)$ also does not depend on whether $\alpha_1 = 1$ or $\alpha_1 = 0$.
This fact implies the following equality: 
$$\theta^{(j)}_{c\mid (1,\aaa^*)} = \theta^{(j)}_{c\mid (0,\aaa^*)},\quad
\ov\theta^{(j)}_{c\mid (1,\aaa^*)} = \ov\theta^{(j)}_{c\mid (0,\aaa^*)},\quad \forall j\in\{3,\ldots,p\},~\forall \aaa^*\in\{0,1\}^{K-1}.
$$
{We next use a similar spirit as the proof of Proposition \ref{thm-cdina-str}.}
Fix a response category $c\in[d-1]$. Given any response vector $\bo y\in\times_{j=1}^p [d]$, introduce surrogate response variables $\mathbf R=(R_1,\ldots,R_p)\in\{0,1\}^p$ as:
\begin{align*}
    R_j = \begin{cases}
        1, & \text{if } y_j = c;\\
        0, & \text{if } y_j \neq c.
    \end{cases}
\end{align*}
For any pattern $\bo r\in\{0,1\}^p$, we have
\begin{align*}
    \sum_{\aaa\in\{0,1\}^K}\MP(\bo R= \bo r\mid\aaa) \nu_{\aaa} = 
    \sum_{\aaa\in\{0,1\}^K}\ov\MP(\bo R=\bo r\mid\aaa) \ov\nu_{\aaa}
\end{align*}
Now for any $\bo r = (r_1, r_2, \bo r^*)\in\{0,1\}^p$, 
\begin{align*}
&~\MP(\mathbf R = \bo r) \\
=&~ \sum_{\aaa\in\{0,1\}^K} \MP(\mathbf R = \bo r, \bo a = \aaa)
\\
=&~ \sum_{\aaa\in\{0,1\}^K} \MP(R_1=r_1, R_2=r_2,\bo a = \aaa) \MP(\mathbf R_{3:p} = \bo r^*\mid R_1=r_1, R_2=r_2,\bo a = \aaa) \\
=&~ \sum_{\aaa\in\{0,1\}^K} \MP(R_1=r_1, R_2=r_2,\bo a = \aaa) \MP(\mathbf R_{3:p} = \bo r^*\mid \bo a = \aaa) \\
=&~ \sum_{\aaa\in\{0,1\}^K} \MP(R_1=r_1, R_2=r_2,\bo a = \aaa) \prod_{j=3}^p \MP(R_j = r_j\mid \bo a_{2:K} = \aaa_{2:K}) 
\quad (\text{since }g_{j,1}=0\text{ for }j\geq 3)
\\
=&~ \sum_{\aaa^*\in\{0,1\}^{K-1}} \Big[\MP(R_1=r_1, R_2=r_2,\bo a_{2:K} = \aaa^*, a_1 = 1)  
+ \MP(R_1=r_1, R_2=r_2,\bo a_{2:K} = \aaa^*, a_1 = 0)\Big]
\\
& \qquad\qquad\qquad \times \prod_{j=3}^p \MP(R_j = r_j\mid \bo a_{2:K} = \aaa^*)
\\
=&~ \sum_{\aaa^*\in\{0,1\}^{K-1}} \MP(R_1=r_1, R_2=r_2,\bo a_{2:K} = \aaa^*) \prod_{j=3}^p \MP(R_j = r_j\mid \bo a_{2:K} = \aaa^*).
\end{align*}
Since the true parameters and alternative parameters give the same marginal distribution of $\mathbf R$, they satisfy the following $2^{p-2}$ equations when fixing $(r_1, r_2)$ and varying $\bo r_{3:J}$ in $\{0,1\}^{p-2}$:
\begin{align}\label{eq-r12}
    &~\sum_{\aaa^*\in\{0,1\}^{K-1}} \MP(R_1=r_1, R_2=r_2,\bo a_{2:K} = \aaa^*) \prod_{j=3}^p \MP(R_j = r_j\mid \bo a_{2:K} = \aaa^*)\\ \notag
    =&~ \sum_{\aaa^*\in\{0,1\}^{K-1}} \ov\MP(R_1=r_1, R_2=r_2,\bo a_{2:K} = \aaa^*) \prod_{j=3}^p \ov\MP(R_j = r_j\mid \bo a_{2:K} = \aaa^*).
\end{align}
Now we obtain an interesting and important observation: fixing $(r_1,r_2)$ in one of $(0,0), (1,0), (0,1)$, and $(1,1)$, the above $2^{p-2}$ equations can be equivalently viewed as characterizing another CatDINA model with $p-2$ questions (which are the original questions $y_3,\ldots,y_p$) and $K-1$ latent variables (which are the original latent variables $a_2,\ldots,a_K$), and the new graphical matrix for this model is just the submatrix $\G_{3:p,2:K} = \G^*$ of the original $\G$ matrix.
Since $\G^*$ satisfies the C-R-D conditions, so the parameters for this CatDINA model are strictly identifiable, so 
\begin{align}\label{eq-from3}
\MP(R_j = r_j\mid \bo a_{2:K} = \aaa^*) &= \ov\MP(R_j = r_j\mid \bo a_{2:K} = \aaa^*),
\\
\label{eq-r1r2}
\MP(R_1=r_1, R_2=r_2,\bo a_{2:K} = \aaa^*) &= \ov  \MP(R_1=r_1, R_2=r_2,\bo a_{2:K} = \aaa^*).
\end{align}
Equation \eqref{eq-from3} above directly implies all the $\theta$-parameters associated with $y_3,\ldots,y_p$ are identifiable:
\begin{align}\label{eq-theta3}
    \theta^{(j)}_{c\mid 0} = \ov\theta^{(j)}_{c\mid 0},\quad \theta^{(j)}_{c\mid 1} = \ov\theta^{(j)}_{c\mid 1},\quad \forall j\in\{3,\ldots,p\}.
\end{align}
Now we spell out the four equations implied by \eqref{eq-r1r2} when $(r_1,r_2)$ varies in $\{0,1\}^2$:
\begin{align*}
\begin{cases}
\MP(R_1=0, R_2=0,\bo a_{2:K} = \aaa^*) = \ov  \MP(R_1=0, R_2=0,\bo a_{2:K} = \aaa^*),\\
\MP(R_1=0, R_2=1,\bo a_{2:K} = \aaa^*) = \ov  \MP(R_1=0, R_2=1,\bo a_{2:K} = \aaa^*),\\
\MP(R_1=1, R_2=0,\bo a_{2:K} = \aaa^*) = \ov  \MP(R_1=1, R_2=0,\bo a_{2:K} = \aaa^*),\\
\MP(R_1=1, R_2=1,\bo a_{2:K} = \aaa^*) = \ov  \MP(R_1=1, R_2=1,\bo a_{2:K} = \aaa^*);
\end{cases}
\end{align*}
which are equivalent to the following system of equations (by adding up appropriate equations):
\begin{align}\label{eq-r1r2-prob}
\begin{cases}
\MP(R_1\geq 0, R_2\geq 0,\bo a_{2:K} = \aaa^*) = \ov  \MP(R_1\geq 0, R_2\geq 0,\bo a_{2:K} = \aaa^*),\\
\MP(R_1\geq 1, R_2\geq 0,\bo a_{2:K} = \aaa^*) = \ov  \MP(R_1\geq 1, R_2\geq 0,\bo a_{2:K} = \aaa^*),\\
\MP(R_1\geq 0, R_2\geq 1,\bo a_{2:K} = \aaa^*) = \ov  \MP(R_1\geq 0, R_2\geq 1,\bo a_{2:K} = \aaa^*),\\
\MP(R_1=1, R_2= 1,\bo a_{2:K} = \aaa^*) = \ov  \MP(R_1=1, R_2= 1,\bo a_{2:K} = \aaa^*).
\end{cases}
\end{align}
For notational simplicity, we next denote 
$$
\theta^{(j)}_{c\mid 0} = u_j, \quad \theta^{(j)}_{c\mid 1} = w_j, ~~ j=1,2,
$$
and define similar notations for the alternative parameters with $\ov \theta^{(j)}_{c\mid 0} = \ov u_j$ and $\ov \theta^{(j)}_{c\mid 1} = \ov w_j$ for $j=1,2$.
Recall that the $\G$ matrix has the second row being $(1, \bo u)$, so for any $\aaa^*\in\{0,1\}^{K-1}$ satisfying $\aaa^* \succeq \uu$ (i.e. vector $\aaa$ is elementwisely greater than or equal to vector $\uu$), the left hand side of each equation in \eqref{eq-r1r2-prob} becomes
\begin{align*}
\begin{cases}
\MP(R_1\geq 0, R_2\geq 0, \bo a_{2:K} = \aaa^*) = \MP(\bo a_{2:K} = \aaa^*)= \nu_{(0,\aaa^*)}+  \nu_{(1,\aaa^*)},
\\[2mm]
\MP(R_1\geq 1, R_2\geq 0, \bo a_{2:K} = \aaa^*) 
= \MP(R_1\geq 1, \bo a_{2:K} = \aaa^*) 
= u_1\cdot \nu_{(0,\aaa^*)} +  w_1\cdot \nu_{(1,\aaa^*)},\\[2mm]
\MP(R_1\geq 0, R_2\geq 1, \bo a_{2:K} = \aaa^*) 
= \MP(R_2\geq 1, \bo a_{2:K} = \aaa^*) 
= u_2\cdot \nu_{(0,\aaa^*)} +  w_2\cdot \nu_{(1,\aaa^*)},\\[2mm]
\MP(R_1=1, R_2= 1,\bo a_{2:K} = \aaa^*) 
= u_1 u_2 \cdot \nu_{(0,\aaa^*)} +  v_1 v_2\cdot \nu_{(1,\aaa^*)}
\end{cases}
\end{align*}
Therefore, any $\aaa^*\in\{0,1\}^{K-1}$ satisfying $\aaa^* \succeq \uu$, Eq.~\eqref{eq-r1r2-prob} can be simply written as
\begin{align}\label{eq-key12}
\begin{cases}
\nu_{(0,\aaa^*)}+  \nu_{(1,\aaa^*)} = \ov \nu_{(0,\aaa^*)}+\ov  \nu_{(1,\aaa^*)};\\[2mm]
u_1\cdot \nu_{(0,\aaa^*)} +  w_1\cdot \nu_{(1,\aaa^*)} = \ov u_1 \cdot\ov \nu_{(0,\aaa^*)} + \ov w_2 \cdot\ov \nu_{(1,\aaa^*)};\\[2mm]
u_2 \cdot \nu_{(0,\aaa^*)} +  w_2 \cdot \nu_{(1,\aaa^*)} = \ov u_2 \cdot \ov \nu_{(0,\aaa^*)} + \ov w_2\cdot \ov \nu_{(1,\aaa^*)};\\[2mm]
u_1 u_2 \cdot \nu_{(0,\aaa^*)} +  v_1 v_2\cdot \nu_{(1,\aaa^*)} 
= \ov u_1 \ov u_2 \cdot\bar \nu_{(0,\aaa^*)} + \ov w_1 \ov w_2 \cdot\bar \nu_{(1,\aaa^*)}.
\end{cases}
\end{align}
First, we transform the above system of equations to obtain
\begin{align*}
\begin{cases}
(u_1 - w_1) (u_2 - \ov w_2) \cdot \nu_{(0,\aaa^*)} = (\ov u_1 - w_1) (\ov u_2 - \ov w_2) \cdot \ov \nu_{(0,\aaa^*)},\\[2mm]
(u_2-\ov w_2)\cdot \nu_{(0,\aaa^*)} + (w_2-\ov w_2)\cdot \nu_{(1,\aaa^*)} = (\ov u_2-\ov w_2)\cdot \ov\nu_{(0,\aaa^*)}.
\end{cases}
\end{align*}
According to Lemma \ref{lem-neq}, the right hand sides of the two equations above are both nonzero. Therefore we can take the ratio of these two equations, which gives
\begin{equation}\notag
f_1(\aaa^*) = \frac{( u_1 -  w_1)\cdot( u_2 - \bar w_2)}{( u_2 - \bar w_2)  + ( w_2 - \bar w_2)\cdot  \nu_{(1,\aaa^*)} /  \nu_{(0,\aaa^*)}}
=\bar  u_1 -  w_1,\quad \forall \aaa^*\in \{0,1\}^{K-1}.
\end{equation}
So for two arbitrary vectors $\aaa_1^*$, $\aaa_2^*\in\{0,1\}^{K-1}$ with $\aaa^*_1, \aaa^*_2 \succeq \uu$, our above deduction gives $f_1(\aaa_1^*)=f_1(\aaa^*_2)=\bar u_1 -  w_1$. This implies
\begin{align}\notag
\frac{( u_1 -  w_1)\cdot( u_2 - \bar w_2)}{( u_2 - \bar w_2)  + ( w_2 - \bar w_2)\cdot  \nu_{(1,\aaa^*_1)} /  \nu_{(0,\aaa^*_1)}}
&=
\frac{( u_1 -  w_1)\cdot( u_2 - \bar w_2)}{( u_2 - \bar w_2)  + ( w_2 - \bar w_2)\cdot  \nu_{(1,\aaa^*_2)} /  \nu_{(0,\aaa^*_2)}},
\\\notag
\Longleftrightarrow\qquad
(w_2 - \bar w_2)\cdot  \frac{\nu_{(1,\aaa_1^*)}}{\nu_{(0,\aaa_1^*)}}
&=(w_2 - \bar w_2)\cdot  \frac{\nu_{(1,\aaa_2^*)}}{\nu_{(0,\aaa_2^*)}},
\\\label{eq-w2}
(w_2 - \bar w_2)\cdot \Big(\frac{\nu_{(1,\aaa_1^*)}}{\nu_{(0,\aaa_1^*)}}& - \frac{\nu_{(1,\aaa_2^*)}}{\nu_{(0,\aaa_2^*)}} \Big) 
= 0.
\end{align}
The last equality above has an important implication: as long as there exist one pair of different vectors  $\aaa_1^*$, $\aaa_2^*\in\{0,1\}^{K-1}$ with $\aaa^*_1, \aaa^*_2 \succeq \uu$ such that 
\begin{align}\label{eq-nset1}
\nu_{(1,\aaa_1^*)} \nu_{(0,\aaa_2^*)} - \nu_{(0,\aaa_1^*)} \nu_{(1,\aaa_2^*)} \neq 0,
\end{align}
then we will have 
$$
\frac{\nu_{(1,\aaa_1^*)}}{\nu_{(0,\aaa_1^*)}} - \frac{\nu_{(1,\aaa_2^*)}}{\nu_{(0,\aaa_2^*)}} \neq 0.
$$
Using the above inequality to examine \eqref{eq-w2}, we get $w_2 = \ov w_2$.
Note that under the assumption stated in the theorem that $\bo u\neq \one_{K-1}$, there indeed exists such two distinct vectors $\aaa^*_1$, $\aaa^*_2$ satisfying $\aaa^*_1, \aaa^*_2\succeq \bo u$. Therefore, $w_2 = \ov w_2$ (i.e., $w_2 = \theta^{(2)}_{c\mid 1}$ is identifiable) as long as $\nnu\not\in \mathcal N$ where the set $\mathcal N$ is defined below:
\begin{align}\label{eq-mathn}
\mathcal N
= \{\nnu\text{ satisfies } 
    \nu_{(1,\aaa_1^*)} \nu_{(0,\aaa_2^*)} - \nu_{(0,\aaa_1^*)}  \nu_{(1,\aaa_2^*)} = 0 \text{ for any } \aaa_1^*\neq \aaa_2^*\text{ with } \aaa_1^*,\, \aaa_2^* \succeq\uu\}.
\end{align}

Next we transform the system of equations \eqref{eq-key12} in another way to get
$$
\begin{cases}
( w_1 -  u_1)\cdot( w_2 - \bar u_2)\cdot \nu_{(1,\aaa^*)}
=
(\bar w_1 -  u_1)\cdot(\bar w_2 - \bar u_2)\cdot \bar \nu_{(1,\aaa^*)};\\[2mm]
(u_2 - \bar u_2)\cdot  \nu_{(0,\aaa^*)}  + (w_2 - \bar u_2)\cdot  \nu_{(1,\aaa^*)}
= (\bar w_2 - \bar u_2)\cdot \bar \nu_{(1,\aaa^*)}.
\end{cases}
$$
The ratio of the above two equations is
$$
f_2(\aaa^*) := \frac{( w_1 -  u_1)\cdot( w_2 - \bar u_2)}{(u_2 - \bar u_2) \cdot  \nu_{(0,\aaa^*)} / \nu_{(1,\aaa^*)}  + (w_2 - \bar u_2)} = \bar w_1 -  u_1.
$$
Again we have $f_2(\aaa_1^*) = f_2(\aaa_2^*)$ for any $\aaa_1^*,\aaa_2^*\succeq\uu$ with $\aaa_1^*\neq\aaa_2^*$. Therefore,
$$
(u_2 - \bar u_2) \cdot  \frac{\nu_{(0,\aaa_1^*)}}{ \nu_{(1,\aaa_1^*)}} = (u_2 - \bar u_2) \cdot  \frac{\nu_{(0,\aaa_2^*)}}{\nu_{(1,\aaa_2^*)}}, \quad\Longrightarrow\quad
(u_2 - \bar u_2) \cdot \left( \frac{\nu_{(0,\aaa_1^*)}}{ \nu_{(1,\aaa_1^*)}} -  \frac{ \nu_{(0,\aaa_2^*)}}{\nu_{(1,\aaa_2^*)}} \right) = 0.
$$
Therefore, as long as $\nnu\not\in\mathcal N$ for $\mathcal N$ defined earlier in \eqref{eq-mathn}, we also have $u_2 = \bar u_2$ and $u_2$ is identifiable.
Now we have shown that $(w_2, u_2)$ are identifiable if $\nnu\not\in\mathcal N$.

Now note that the system of equations \eqref{eq-key12} are symmetric about $(w_2, u_2)$ and $(w_1, u_1)$. Therefore, $(w_1, u_1)$ are also identifiable if $\nnu\not\in\mathcal N$. In summary, when $\nnu\not\in\mathcal N$, all the $\theta$-parameters associated with $y_1$ and $y_2$ are identifiable 
$$
\theta^{(j)}_{c\mid 0} = \ov\theta^{(j)}_{c\mid 0},\quad \theta^{(j)}_{c\mid 1} = \ov\theta^{(j)}_{c\mid 1},\quad \forall j\in\{1,2\}.
$$
Combining the above conclusion with \eqref{eq-theta3} and noting that the category $c\in[d-1]$ is arbitrary, we obtain that all the $\theta$-parameters associated with $y_1,\ldots,y_p$ are identifiable if $\nnu\not\in\mathcal N$.

Next, we show that the proportion parameters $\nnu$ are also identifiable when $\nnu\not\in\mathcal N$.
First recall that $\bo\Phi^{(j)}$ is a $d\times 2^K$ with the $(c,\aaa)$th entry being $\mathbb P(y_j=c\mid \bo a=\aaa)$. When $\nnu\not\in\mathcal N$, we have shown $\bo\Phi^{(j)} = \ov{\bo\Phi}^{(j)}$ for all $j\in[p]$ when $\nnu\not\in\mathcal N$.
since the first $K$ rows of the $\G$ matrix form an identity matrix $I_K$, consider the following equation under the true and alternative parameters:
\begin{align*}
    \bigodot_{j=1}^K \bo\Phi^{(j)} \cdot \nnu = \bigodot_{j=1}^K \ov{\bo\Phi}^{(j)} \cdot \ov\nnu  &= \bigodot_{j=1}^K {\bo\Phi}^{(j)} \cdot \ov\nnu;
    \\
    \Longrightarrow\quad
    \bigodot_{j=1}^K \bo\Phi^{(j)} \cdot \Big( \nnu-\ov\nnu\Big) &= \zero_{2^K}.
\end{align*}
Since $\G_{1:K,:} = \mathbf I_K$, we can use a similar argument as in the proof of Proposition \ref{prop-3chi} and show that the $d^K \times 2^K$ matrix $\bigodot_{j=1}^K \bo\Phi^{(j)}$ has full column rank $2^K$; specifically, this is because this matrix has the following equivalent representation as a Kronecker product of $K$ rank-two matrices:
\begin{align*}
\bigodot_{j=1}^K \bo\Phi^{(j)}
= \bigotimes_{j=1}^K  \begin{pmatrix}
\theta^{(j)}_{1\mid 0} ~&~ \theta^{(j)}_{1\mid 1} \\[2mm]
\vdots ~&~ \vdots\\[2mm]
\theta^{(j)}_{d\mid 0} ~&~ \theta^{(j)}_{d\mid 1}
\end{pmatrix}.
\end{align*}
The fact that $\bigodot_{j=1}^K \bo\Phi^{(j)}$ has full column rank implies that the earlier equation $\bigodot_{j=1}^K \bo\Phi^{(j)} \cdot \Big( \nnu-\ov\nnu\Big) = \zero_{2^K}$ has a unique solution
$$
\nnu-\ov\nnu = \zero_{2^K},
$$
so $\nnu=\ov\nnu$ holds (that is, $\nu_{\aaa}=\ov \nu_{\aaa}$ for all $\aaa\in\{0,1\}^K$).
This means we have shown all the model parameters are identifiable when $\nnu\not\in \mathcal N$ with $\mathcal N$ defined \eqref{eq-mathn}.
Since $\mathcal N$ is a measure-zero subset of the probability simplex $\mathcal S^{2^K-1}$, we have proved the generic identifiability of the CatDINA model parameters.


\bigskip\noindent
\textbf{Proof of part (b) about the blessing of dependence.}
We next examine the non-identifiable set $\mathcal N$ defined in \eqref{eq-mathn}  and reveal the blessing of dependence.
Consider $\nnu\in\mathcal N$.
For an arbitrary binary vector $\aaa = (\alpha_1, \aaa^*)$ where the $(K-1)$-dimensional subvector satisfies $\aaa^*\succeq\uu$, we have
\begin{align*}
&~\mathbb P(a_{1} = \alpha_1) \mathbb P(\boldsymbol{a}_{2:K} = \aaa^*)\\
=&~ \Big(\sum_{\bo\beta\in\{0,1\}^{K-1}} \nu_{(\alpha_1,\,\bo\beta)}\Big) (\nu_{(\alpha_1,\,\aaa^*)} + \nu_{(1-\alpha_1,\,\aaa^*)}) 
\\
= &~ \sum_{\bo\beta\in\{0,1\}^{K-1}} \nu_{(\alpha_1,\,\bo\beta)} \nu_{(\alpha_1,\,\aaa^*)} + \sum_{\bo\beta\in\{0,1\}^{K-1}} \nu_{(\alpha_1,\,\bo\beta)} \nu_{(1-\alpha_1,\,\aaa^*)}
\\
=&~ \sum_{\bo\beta\in\{0,1\}^{K-1}} \nu_{(\alpha_1,\,\bo\beta)} \nu_{(\alpha_1,\,\aaa^*)} + \sum_{\bo\beta\in\{0,1\}^{K-1}} \nu_{(1-\alpha_1,\,\bo\beta)} \nu_{(\alpha_1,\,\aaa^*)}
\quad (\text{because we consider } \nnu \in \mathcal N)
\\
=&~ \Big(\sum_{\bo\beta\in\{0,1\}^{K-1}} \nu_{(\alpha_1,\,\bo\beta)} + \sum_{\bo\beta\in\{0,1\}^{K-1}} \nu_{(1-\alpha_1,\,\bo\beta)}\Big) \nu_{(\alpha_1,\,\aaa^*)} \\
=&~ \nu_{(\alpha_1,\,\aaa^*)} 
= \mathbb P(\boldsymbol{a} = \aaa).
\end{align*}
The third equality above holds because by the definition of $\mathcal N$, the following holds for any $\nnu \in \mathcal N$: 
$$\nu_{(\alpha_1,\,\bo\beta)}  \nu_{(1-\alpha_1,\,\aaa^*)} = \nu_{(1-\alpha_1,\,\bo\beta)}  \nu_{(\alpha_1,\,\aaa^*)},\quad
\forall \alpha_1\in\{0,1\}, ~
\forall \aaa^*,\bo\beta \succeq\uu.
$$ 
Now we obtain that if $\nnu\in\mathcal N$, then $\mathbb P(\boldsymbol{a} = (\alpha_1,\aaa^*)) = \mathbb P(a_{1} = \alpha_1) \mathbb P(\boldsymbol{a}_{2:K} = \aaa^*)$ for any $\alpha_1\in\{0,1\}$ and $\aaa^*\succeq\uu$.
This implies if $\nnu\in\mathcal N$, then the first latent variable $a_1$ is conditionally independent of the other latent variables $\boldsymbol{a}_{2:K}$ provided that $\boldsymbol{a}_{2:K}\succeq\uu$.

On the other hand, if latent variables $a_1$ and $\boldsymbol{a}_{2:K}$ are conditionally independent given $\boldsymbol{a}_{2:K} \succeq\uu$, then for any $\aaa^*\succeq\uu$ we have
\begin{align*}
\frac{\nu_{(1,\aaa^*)}}{\nu_{(0,\aaa^*)}} 
= \frac{\mathbb P(\boldsymbol{a} = (1,\aaa^*))}{\mathbb P(\boldsymbol{a} = (0,\aaa^*))}
= \frac{\mathbb P(a_{1} = 1) \mathbb P(\boldsymbol{a}_{2:K} = \aaa^*)}{\mathbb P(a_{1} = 0) \mathbb P(\boldsymbol{a}_{2:K} = \aaa^*)}
= \frac{\mathbb P(a_{1} = 1)}{\mathbb P(a_{1} = 0)} =:\rho.
\end{align*}
This means for any $\aaa_1^*\neq\aaa_2^*$ with $\aaa_1^*,\aaa_2^*\succeq\uu$, the equality $\nu_{(1,\aaa^*_1)}/\nu_{(0,\aaa^*_1)} - \nu_{(1,\aaa^*_2)}/\nu_{(0,\aaa^*_2)} = \rho-\rho = 0$ must hold, which is equivalent to $\nu_{(1,\aaa^*_1)} \nu_{(0,\aaa^*_2)} - \nu_{(0,\aaa^*_1)} \nu_{(1,\aaa^*_2)} = 0$ for any $\aaa_1^*\neq\aaa_2^*$ with $\aaa_1^*,\aaa_2^*\succeq\uu$.
This means if $a_1\indep\boldsymbol{a}_{2:K}\mid \boldsymbol{a}_{2:K}\succeq\uu$ holds, then  $\nnu\in\mathcal N$ must be true.
%

Now we have proved the statement that $$a_1\indep\bo a_{2:K}\mid \boldsymbol{a}_{2:K}\succeq\uu,$$ is exactly equivalent to the statement that
$$\nnu\in\mathcal N = \{ \nu_{(1,\aaa^*_1)} \nu_{(0,\aaa^*_2)} - \nu_{(0,\aaa^*_1)} \nu_{(1,\aaa^*_2)} = 0\text{ holds for any }\aaa_1^*\neq\aaa_2^*\text{ with }\aaa_1^*,\aaa_2^*\succeq\uu\}.$$
This completes the proof of the theorem.
\qed

\subsection{Proof of Proposition \ref{prop-depdep}}
Denote the marginal probability mass function of the vector $(\alpha_{k_1}, \alpha_{k_2})$ by $\{\tilde \nu_{(\alpha_{k_1}, \alpha_{k_2})};\; (\alpha_{k_1}, \alpha_{k_2})\in\{0,1\}^2\}$. Each $\tilde \nu_{(\alpha_{k_1}, \alpha_{k_2})} = \mathbb P(a_{k_1} = \alpha_{k_1}, a_{k_2} = \alpha_{k_2})$ can be obtained by summing up appropriate entries of the vector $(\nu_{\aaa};\; \aaa\in\{0,1\}^K)$.
Similarly, denote the marginal distribution of each $\alpha_k\in\{0,1\}$ by $\tilde\nu_{\alpha_k} = \mathbb P(a_{k}=\alpha_k)$.
Then we have
\begin{align*}
&~\mathbb P(\{y_j=c_j:\; j\in \ch(a_{k_1})\},~ \{y_m=c_m:\; m\in \ch(a_{k_2})\}) \\
= &~
 \sum_{\aaa\in\{0,1\}^K} \nu_{\aaa} \prod_{j\in  \ch(a_{k_1}, a_{k_2})} 
    \prod_{k=1}^{K}
    \left[
    \left(\theta^{(j)}_{c_j\mid 1}\right)^{\alpha_{k}} 
    \cdot
    \left(\theta^{(j)}_{c_j\mid 0}\right)^{1-\alpha_{k}}\right]^{\mathbbm{1}(g_{j,k}=1)} \\
= &~
 \sum_{\aaa\in\{0,1\}^K} \nu_{\aaa} \prod_{j\in  \ch(a_{k_1}, a_{k_2})} 
\mathbb P(y_j \mid \bo g_j, \aaa) \\
= &~ \sum_{(\alpha_{k_1}, \alpha_{k_2})\in\{0,1\}^2} \tilde\nu_{(\alpha_{k_1}, \alpha_{k_2})} 
\prod_{j\in  \ch(a_{k_1}, a_{k_2})} \mathbb P(y_j \mid \bo g_j, \aaa) \\
= &~ \sum_{(\alpha_{k_1}, \alpha_{k_2})\in\{0,1\}^2} \tilde\nu_{(\alpha_{k_1}, \alpha_{k_2})} 
\prod_{j\in \ch(a_{k_1})} \mathbb P(y_j \mid \alpha_{k_1})
\prod_{j\in \ch(a_{k_2})} \mathbb P(y_j \mid \alpha_{k_2})
\\
\stackrel{(\star)}{=}
&~ \sum_{(\alpha_{k_1}, \alpha_{k_2})\in\{0,1\}^2} \tilde\nu_{\alpha_{k_1}} \tilde\nu_{\alpha_{k_2}}   \prod_{j\in \ch(a_{k_1})} \mathbb P(y_j \mid \alpha_{k_1})
\prod_{j\in \ch(a_{k_2})} \mathbb P(y_j \mid \alpha_{k_2}) 
\\
= &~
\left(\sum_{\alpha_{k_1}\in\{0,1\}} \tilde\nu_{\alpha_{k_1}} \prod_{j\in \ch(a_{k_1})} \mathbb P(y_j \mid \alpha_{k_1}) \right)
\cdot 
\left(\sum_{\alpha_{k_2}\in\{0,1\}} \tilde\nu_{\alpha_{k_2}} \prod_{j\in \ch(a_{k_2})} \mathbb P(y_j \mid \alpha_{k_2}) \right)
\\
= &~
\left(\sum_{\aaa\in\{0,1\}^K} \tilde\nu_{\alpha_{k_1}} \prod_{j\in \ch(a_{k_1})} \mathbb P(y_j \mid \aaa, \bo g_j) \right)
\cdot 
\left(\sum_{\aaa\in\{0,1\}^K} \tilde\nu_{\alpha_{k_2}} \prod_{j\in \ch(a_{k_2})} \mathbb P(y_j \mid \aaa, \bo g_j) \right)
\\
= &~ \mathbb P(\{y_j=c_j:\; j\in \ch(a_{k_1})\})\cdot \mathbb P(\{y_m=c_m:\; m\in \ch(a_{k_2})\}),
\end{align*}
where $(\star)$ follows from the independence between $\alpha_{k_1}$ and $\alpha_{k_2}$.

On the other hand, the above deduction also implies that if $\{y_j;\; j\in\ch(a_{k_1})\}$ and $\{y_j;\; j\in\ch(a_{k_2})\}$ are not independent, then
\begin{align*}
&~\mathbb P(\{y_j=c_j:\; j\in \ch(a_{k_1})\},~ \{y_m=c_m:\; m\in \ch(a_{k_2})\}) \\
&\quad - 
\mathbb P(\{y_j=c_j:\; j\in \ch(a_{k_1})\})\cdot \mathbb P(\{y_m=c_m:\; m\in \ch(a_{k_2})\}) \\
=&~ 
\sum_{(\alpha_{k_1}, \alpha_{k_2})\in\{0,1\}^2} (\tilde\nu_{(\alpha_{k_1}, \alpha_{k_2})} - \tilde\nu_{\alpha_{k_1}} \tilde\nu_{\alpha_{k_2}}  )
\prod_{j\in \ch(a_{k_1})} \mathbb P(y_j \mid \alpha_{k_1})
\prod_{j\in \ch(a_{k_2})} \mathbb P(y_j \mid \alpha_{k_2})\\
\neq &~ 0
\end{align*}
for some $\{c_j;\; j\in\ch(a_{k_1})\}$. This implies that there must exist some $(\alpha_{k_1}, \alpha_{k_2})\in\{0,1\}^2$ such that $\tilde\nu_{(\alpha_{k_1}, \alpha_{k_2})} - \tilde\nu_{\alpha_{k_1}} \tilde\nu_{\alpha_{k_2}} \neq 0$.
This means $a_{k_1} \notindep a_{k_2}$.
The proof of the Proposition is complete.
\qed



\subsection{Proof of Lemma \ref{lem-neq}}
We use proof by contradiction. Suppose $\ov\theta^{(j)}_{c\mid 0} = \theta^{(j)}_{c\mid 1}$ for some $j$ and $c$. First consider $c<d$ then by our assumption there is $\theta^{(j)}_{c\mid 0} < \theta^{(j)}_{c\mid 1}$.
Then we have
\begin{align*}
    &~\sum_{\aaa:\, \aaa\succeq \cg_j} \nu_{\aaa} \theta^{(j)}_{c\mid 1} + \sum_{\aaa:\, \aaa\nsucceq \cg_j} \nu_{\aaa} \theta^{(j)}_{c\mid 0} 
    \\
    &~<~ \theta^{(j)}_{c\mid 1} =\ov\theta^{(j)}_{c\mid 0}
    ~<~
    \sum_{\aaa:\, \aaa\succeq \ov\cg_j} \ov\nu_{\aaa} \ov\theta^{(j)}_{c\mid 1} + \sum_{\aaa:\, \aaa\nsucceq \ov\cg_j} \ov\nu_{\aaa} \ov\theta^{(j)}_{c\mid 0}
\end{align*}
{The above inequality can be equivalently written as 
$$
\sum_{\aaa} \nu_{\aaa} \theta^{(j)}_{c\mid \aaa} < \sum_{\aaa} \ov\nu_{\aaa} \ov\theta^{(j)}_{c\mid \aaa},
$$
which directly contradicts the following fact implied by that $\ov\G, \ov\btheta, \ov\nnu$ lead to the same distribution of the observed vector $\bo y$,}
$$\mathbb P(y_j=c\mid \G, \btheta,\nnu) = \sum_{\aaa\in\{0,1\}^K} \nu_{\aaa} \theta^{(j)}_{c\mid\aaa} = \sum_{\aaa\in\{0,1\}^K} \ov\nu_{\aaa} \ov\theta^{(j)}_{c\mid\aaa}
= \mathbb P(y_j=c\mid \ov\G, \ov\btheta, \ov\nnu).$$
This contradiction shows $\ov\theta^{(j)}_{c\mid 0} \neq \theta^{(j)}_{c\mid 1}$ must hold for any $ 1\leq c\leq d-1$. Similarly we can prove $\ov\theta^{(j)}_{d\mid 0} \neq \theta^{(j)}_{d\mid 1}$. By symmetry we also have $ \ov\theta^{(j)}_{c\mid 1} \neq \theta^{(j)}_{c\mid 0}$ for all $j\in[p]$ and all $c\in[d]$. This proves Lemma \ref{lem-neq}.
\qed

\subsection{Proof of Lemma \ref{lem-gg}}
We next prove by contradiction. Assume there exists some $h\in[K]$ and a set $\mca \subseteq [K]\setminus\{h\}$, such that 
\begin{align}\label{eq-pbc}
    \bigvee_{k\in\mathcal A}~ \bar \cg_k \succeq \bar\cg_h
\end{align}
and also assume that there exists a set $\mcb\subseteq\{K+1,\ldots,J\}$ such that $\max_{m\in \mathcal B}\; g_{m,h} = 0$ and $\max_{m\in \mathcal B}\; g_{m,k}=1 ~\text{for all}~ k\in \mathcal A$.
{We next explain why assuming $\bigvee_{k\in\mathcal A}~ \bar \cg_k \succeq \bar\cg_h$ in \eqref{eq-pbc} is the correct starting point in the proof by contradiction. Under the definition of $\bigvee$ in \eqref{eq-bigvee}, the conclusion stated in Lemma \ref{lem-gg} is $\bigvee_{k\in\mathcal A}~ \bar \cg_k \nsucceq \bar\cg_h$, which is equivalent to stating that 
$
\max_{k\in\mathcal A} \ov g_{km}
< \ov g_{hm}$ for some $m\in[K]$.
Then in order to prove by contradiction, we assume the negation of the above statement, which is:
$$\max_{k\in\mathcal A} \ov g_{km}
 \geq \ov g_{hm} \text{ for all } m\in[K]
\quad\Longleftrightarrow \quad
\bigvee_{k\in\mathcal A}~ \bar \cg_k \succeq \bar\cg_h.
$$
Therefore, assuming $\bigvee_{k\in\mathcal A}~ \bar \cg_k \succeq \bar\cg_h$ is the correct procedure of proof by contradiction, and any contradiction as a consequence of this assumption would prove the original conclusion $\bigvee_{k\in\mathcal A}~ \bar \cg_k \nsucceq \bar\cg_h$ of Lemma \ref{lem-gg}.}

First, for each $c\in\{1,\ldots,d-1\}$ define
\begin{align}\label{eq-delta1}
    \bo\Delta_{1:p,\;c}^{*} &= 
    \bar\theta_{c\mid 1}^{(h)} \ee_h
    +
    \sum_{k\in\mca}\bar\theta_{c\mid 0}^{(k)} \ee_k
    +
    \sum_{m=K+1}^p \theta_{c\mid 0}^{(m)} \ee_m,
    \\
    \yy_c^{*} &= c\left(\ee_h
    +
    \sum_{k\in\mca} \ee_k
    +
    \sum_{m=K+1}^p \ee_m\right).
\end{align}
Under the above definitions, we claim that the row vector of $\bigodot_{j\in[p]} \Big(\overline{\bo\Phi}^{(j)}-\bo\Delta_{j,:}\bcdot\one^\top_{2^K} \Big)$ indexed by response pattern $\yy_c^{*}$ is an all-zero vector. 
To see this, note that for any $\aaa\in\{0,1\}^K$, the corresponding element in the row denoted by $\bar t_{\yy_c^{*}, \aaa}$ contains a factor 
$$
f_{\aaa} =\left(\bar \theta_{c\mid\aaa}^{(h)} - \bar\theta_{c\mid 1}^{(h)} \right) \prod_{k\in\mca} \left(\bar \theta_{c\mid\aaa}^{(k)} - \bar\theta_{c\mid 0}^{(k)} \right).
$$
This factor $f_{\aaa}$ is potentially nonzero only if $\bar \theta_{c\mid\aaa}^{(j)} \neq \bar\theta_{c\mid 1}^{(h)}$ and $\bar \theta_{c\mid\aaa}^{(k)} \neq \bar\theta_{c\mid 0}^{(k)}$ for all $k\in\mca$ (equivalently, $\bar \theta_{c\mid\aaa}^{(k)} = \bar\theta_{c\mid 1}^{(k)}$ for all $k\in\mca$). 
However, this is impossible for any $\aaa$ under the assumption \eqref{eq-pbc} that $\vee_{k\in\mathcal A}~ \bar \cg_k \succeq \bar\cg_h$. This is because for any $\aaa$ such that $\bar \theta_{c\mid\aaa}^{(k)} = \bar\theta_{c\mid 1}^{(k)}$ for all $k\in\mca$, there must be $\aaa\succeq \vee_{k\in\mathcal A}~ \bar \cg_k$, and our assumption \eqref{eq-pbc} further gives $\aaa\succeq  \bar\cg_h$, which means $\bar\theta^{(h)}_{c\mid\aaa} = \bar\theta^{(h)}_{c\mid 1}$.
This proves $f_{\aaa}=0$ must hold for all $\aaa\in\{0,1\}^K$.
Since $\bar t_{\yy_c^{*}, \aaa}$ contains $f_{\aaa}$ as a factor, there is $\bar t_{\yy_c^{*}, \aaa}=0$ for all $\aaa\in\{0,1\}^K$.
Therefore $\sum_{\aaa\in\{0,1\}^K}  t_{\yy_c^{*}, \aaa}  \nu_{\aaa} = \sum_{\aaa\in\{0,1\}^K} \bar t_{\yy_c^{*}, \aaa} \bar \nu_{\aaa}=0$. 
Now we focus on $t_{\yy_c^{*},\aaa}$. Note that $\G_{(K+1):2K} = \I_K$. Due to the term $\sum_{m=K+1}^p \theta_{c\mid 0}^{(m)} \ee_m$ in the definition of $\bo\Delta$ in \eqref{eq-delta1}, we have $t_{\yy_c^{*}, \aaa}$ is potentially nonzero only if $\aaa=\one_K$. Therefore
\begin{align*}
    0 = \nu_{\one_K} \left(\theta_{c\mid 1}^{(h)} - \bar\theta_{c\mid 1}^{(h)}\right)
    \left(\theta_{c\mid 1}^{(k)} -\bar\theta_{c\mid 0}^{(k)}\right)
    \prod_{m=K+1}^p \left(\theta_{c\mid 1}^{(m)} - \theta_{c\mid 0}^{(m)}\right).
\end{align*}
This gives $\theta_{c\mid 1}^{(h)} =\bar\theta_{c\mid 1}^{(h)}$.

Second, recall the set $\mcb\subseteq\{K+1,\ldots,p\}$ defined earlier satisfies that $\max_{m\in \mathcal B}\; g_{m,h} = 0$ and $\max_{m\in \mathcal B}\; g_{m,k}=1 ~\text{for all}~ k\in \mathcal A$. For each $c\in\{1,\ldots,d-1\}$, now define
\begin{align}\label{eq-delta2}
    \bo\Delta_{1:p,\;c}^{**} &= 
    \bar\theta_{c\mid 1}^{(h)} \ee_h
    +
    \sum_{k\in\mca}\bar\theta_{c\mid 0}^{(k)} \ee_k
    +
    \sum_{m\in\mcb} \theta_{c\mid 0}^{(m)} \ee_m,
    \\
    \yy_c^{**} &= c\left(\ee_h
    +
    \sum_{k\in\mca} \ee_k
    +
    \sum_{m\in\mcb} \ee_m\right).
\end{align}
Under the above new definitions, we still claim that the row vector of $\bigodot_{j\in[p]} \Big(\overline{\bo\Phi}^{(j)}-\bo\Delta_{j,:}\bcdot\one^\top_{2^K} \Big)$ indexed by response pattern $\yy_c^{**}$ is an all-zero vector. The reasoning is similar to that in the previous paragraph after \eqref{eq-delta1}, because that earlier argument only depends on the fact that $\bo\Delta_{1:p,\;c}^{*}$ contains the first two groups of terms $\bar\theta_{c\mid 1}^{(h)} \ee_h + \sum_{k\in\mca}\bar\theta_{c\mid 0}^{(k)} \ee_k$, and $\bo\Delta_{1:p,\;c}^{**}$ also contains such two groups of terms.
Therefore $\sum_{\aaa\in\{0,1\}^K}  t_{\yy_c^{**}, \aaa}  \nu_{\aaa} = \sum_{\aaa\in\{0,1\}^K} \bar t_{\yy_c^{**}, \aaa} \bar \nu_{\aaa} = 0$. 
Considering the $\theta_{c\mid 1}^{(h)} =\bar\theta_{c\mid 1}^{(h)}$ obtained in the end of last paragraph, the element $t_{\yy_c^{**}, \aaa}$ would equal zero if $\alpha_h = 1$; this is because $t_{\yy_c^{**}, \aaa}$ contains a factor $\theta_{c\mid \aaa}^{(h)} - \bar \theta_{c\mid 1}^{(h)}$ which equals zero if $\alpha_h = 1$.
This means the element $t_{\yy_c^{**}, \aaa}$ has the following property,
\begin{align*}
    &t_{\yy_c^{**}, \aaa}= \\
    &\begin{cases}
    \paren{\theta_{c\mid 0}^{(h)} - \bar \theta_{c\mid 1}^{(h)}} \prod_{k\in\mca}\paren{\theta_{c\mid 1}^{(k)} - \bar\theta_{c\mid 0}^{(k)}} \prod_{m\in\mcb} \paren{ \theta_{c\mid 1}^{(m)} - \theta_{c\mid 0}^{(m)}}, 
    & \alpha_{h} = 0\text{ and }\aaa \succeq \bigvee_{m\in \mca\cup\mcb}\;\cg_m;\\
    0, & \text{otherwise.}
    \end{cases}
\end{align*}
Now an important observation is that the following set $\mathcal M$ of $K$-dimensional binary vectors is nonempty,
$$
\mathcal M :=
\left\{\aaa\in\{0,1\}^K: \alpha_{h} = 0, \text{ and }\aaa \succeq \bigvee_{m\in \mca\cup\mcb}\;\cg_m  \right\}.
$$
This is true because  $\max_{m\in \mathcal B}\; g_{m,h} = 0$ and $\max_{m\in \mathcal B}\; g_{m,k}=1 ~\text{for all}~ k\in \mathcal A$, and hence $\aaa \succeq \vee_{m\in \mca\cup\mcb}\;\cg_m$ still allows for $\alpha_h$ (that is, the $h$th element of $\aaa$) to be potentially zero.
Now the equation
$$\sum_{\aaa\in\{0,1\}^K}  t_{\yy_c^{**}, \aaa}  \nu_{\aaa} = 0$$
can be equivalently written as
\begin{align}\label{eq-contr}
     \paren{\theta_{c\mid 0}^{(h)} - \bar \theta_{c\mid 1}^{(h)}} \prod_{k\in\mca}\paren{\theta_{c\mid 1}^{(k)} - \bar\theta_{c\mid 0}^{(k)}} \prod_{m\in\mcb} \paren{ \theta_{c\mid 1}^{(m)} - \theta_{c\mid 0}^{(m)}} \paren{\sum_{\aaa\in\mathcal M} \nu_{\aaa}} = 0.
\end{align}
Recall that $\theta_{c\mid 0}^{(j)} \neq \theta_{c\mid 1}^{(j)}$ for all $j\in[p]$ and $\bar \theta_{c\mid 0}^{(j)} \neq \bar \theta_{c\mid 1}^{(j)}$ for all $j\in[p]$, and also $\sum_{\aaa\in\mathcal M} \nu_{\aaa}>0$.
Therefore each factor of the left hand side of \eqref{eq-contr} is nonzero, which gives a contradiction. This means the assumption \eqref{eq-pbc} in the beginning of the proof is incorrect and the Lemma \ref{lem-gg} is proved.
\qed

\section{EM algorithms for the BLESS model}\label{sec-algodetail}

\noindent\textbf{When $\G$ is known and fixed.}
We first consider the scenario where the measurement graph $\G$ is known or already estimated, and describe the EM algorithm for the continuous parameters $\btheta$ and $\nnu$.
Denote the subject-specific latent pattern indicators by $z_{i,\aaa} = \mathbbm{1}(\bo a = \aaa)$ and $\mathbf Z=(z_{i,\aaa};\; i\in[N], \aaa\in\{0,1\}^K)$.
An important observation is that the following equivalent formulation holds under the BLESS model, 
\begin{align*}
\theta^{(j)}_{c\mid\aaa}
=& \mathbbm{1}(\aaa\succeq \bo g_j) \theta^{(j)}_{c\mid 1} + [1-\mathbbm{1}(\aaa\succeq \bo g_j)] \theta^{(j)}_{c\mid 0} \\
=& \Big(\sum_{k=1}^K \alpha_k g_{j,k}\Big) \theta^{(j)}_{c\mid 1} + \Big(1-\sum_{k=1}^K \alpha_k g_{j,k}\Big) \theta^{(j)}_{c\mid 0}.
\end{align*}
Therefore, the complete data log-likelihood function under the BLESS model can be written as follows,
\begin{align*}
&~\ell(\btheta, \nnu\mid \mathbf Y, \mathbf Z, \G) \\
=&~ \sum_{i=1}^N \Big\{  \sum_{\aaa\in\{0,1\}^K} \Big[z_{i,\aaa}\log(\nu_{\aaa}) + \sum_{j=1}^p \sum_{c=1}^{d} (y_{ijc}z_{i,\aaa})\log(\theta^{(j)}_{c\mid\aaa}) \Big] \Big\} \\
=&~ \sum_{i=1}^N \sum_{\aaa\in\{0,1\}^K} \sum_{j=1}^p \sum_{c=1}^{d}  y_{ijc}z_{i,\aaa}   \Big[\sum_{k=1}^K\alpha_k g_{j,k} \log(\theta^{(j)}_{c\mid 1}) + \Big(1-\sum_{k=1}^K\alpha_k g_{j,k}\Big) \log(\theta^{(j)}_{c\mid 0})\Big] \\
&\quad + \sum_{\aaa\in\{0,1\}^K} \sum_{i=1}^N z_{i,\aaa}\log(\nu_{\aaa}).
\end{align*}
The above formulation allows for a convenient EM algorithm to compute the MLE, which iterates through E-step and a M-step towards convergence of the marginal log-likelihood. We present this EM algorithm in Algorithm \ref{algo-em1}.

\begin{algorithm}[h!]
\caption{EM algorithm for the BLESS Model when $\G$ is Known}\label{algo-em1}

\SetKwInOut{Input}{input}
\SetKwInOut{Output}{Output}

\KwData{Observed data array $\mathbf Y=(y_{ijc})_{N\times p \times d}\in\{0,1\}^{N\times p\times d}$ and number of latent variables $K$.}
 \While{not converged}{
 
\tcp{E Step}
Calculate the conditional expectation of each $z_{i,\aaa}$:
$$
\mathbb E[z_{i,\aaa}] \leftarrow
\frac{ \nu_{\aaa}\prod_{j=1}^p \prod_{c=1}^{d} (\theta^{(j)}_{c\mid \aaa})^{y_{ijc}} }{ \sum_{\aaa' \in \{0,1\}^K} \nu_{\aaa'}\prod_{j=1}^p \prod_{c=1}^{d} (\theta^{(j)}_{c\mid \aaa'})^{y_{ijc}} },\quad i\in[N],~ \aaa\in\{0,1\}^K.
$$

  \tcp{M Step}
  Update continuous parameters $\btheta$ and $\nnu$:
    \begin{align*}
    \theta_{c\mid 1}^{(j)} &\leftarrow \frac{\sum_{\aaa}\sum_{i=1}^N \mathbb E[z_{i,\aaa}]\sum_{k=1}^K \alpha_k g_{j,k} y_{ijc}}{\sum_{\aaa}\sum_{i=1}^N \mathbb E[z_{i,\aaa}] \sum_{k=1}^K \alpha_k g_{j,k}},\quad j\in[p],~ c\in[d];\\
    \theta_{c\mid 0}^{(j)} &\leftarrow \frac{\sum_{\aaa}\sum_{i=1}^N \mathbb E[z_{i,\aaa}](1-\sum_{k=1}^K \alpha_k g_{j,k}) y_{ijc}}{\sum_{\aaa}\sum_{i=1}^N \mathbb E[z_{i,\aaa}] (1-\sum_{k=1}^K \alpha_k g_{j,k})},\quad  j\in[p],~ c\in[d];\\
    \nu_{\aaa} &\leftarrow \frac{\sum_{i=1}^N \mathbb E[z_{i,\aaa}]}{\sum_{\aaa'\in\{0,1\}^K} \sum_{i=1}^N \mathbb E[z_{i,\aaa'}]}, \quad \aaa\in\{0,1\}^K.
    \end{align*}
    Update $\theta_{c\mid \aaa}^{(j)} = \mathbbm{1}(\aaa\succeq \bo g_j)\theta_{c\mid 1}^{(j)} + (1-\mathbbm{1}(\aaa\succeq \bo g_j)) \theta_{c\mid 0}^{(j)}$ after completing the M Step.
    
}

\Output{Parameters $\btheta$, $\nnu$.}
\end{algorithm}

\medskip
\noindent
\textbf{When $\G$ is unknown.}
We next describe a more general approximate EM algorithm that jointly estimate the $\G$ matrix and the continuous parameters.
Introduce notation $\bo s = (s_1,\ldots,s_p)$ with each $s_j\in[K]$, where $s_j = k$ if $g_{j,k}=1$. Then there is a one-to-one correspondence between the vector $\bo s$ and matrix $\G$.
We can just augment the EM algorithm described above by adding the following step of drawing samples of $\{g_{j,k}\}$ in the E step.
The conditional distribution of each $s_j$ is the Categorical distribution with parameters as follows,
\begin{align*}
\gamma_{j,k} = \mathbb P(s_j=k\mid -) 
&= \frac{\prod_{\aaa} \prod_{i=1}^N \prod_{c=1}^d 
[(\theta_{c\mid 1}^{(j)})^{ \alpha_k }  (\theta_{c\mid 0}^{(j)})^{ 1- \alpha_k }]^{y_{ijc} z_{i,\aaa}}}
{\sum_{k'=1}^K \prod_{\aaa} \prod_{i=1}^N \prod_{c=1}^d [(\theta_{c\mid 1}^{(j)})^{ \alpha_{k'} }  (\theta_{c\mid 0}^{(j)})^{ 1- \alpha_{k'} }]^{y_{ijc} z_{i,\aaa}}}\\
&= \frac{\prod_{\aaa} \prod_{c=1}^d 
[(\theta_{c\mid 1}^{(j)})^{ \alpha_k }  (\theta_{c\mid 0}^{(j)})^{ 1- \alpha_k }]^{\sum_{i=1}^N y_{ijc} z_{i,\aaa}}}
{\sum_{k'=1}^K \prod_{\aaa}\prod_{c=1}^d [(\theta_{c\mid 1}^{(j)})^{ \alpha_{k'} }  (\theta_{c\mid 0}^{(j)})^{ 1- \alpha_{k'} }]^{\sum_{i=1}^N y_{ijc} z_{i,\aaa}}}.
\end{align*}
Since the entries of the $\G$ are needed in the E step of the algorithm, after obtaining the $\gamma_{j,k}$, we let $s_j=k$ if the current posterior probability $\mathbb P(s_j=k\mid -)$ is the largest among all the $K$ posterior probabilities.
Such a procedure has a similar spirit to a classification EM algorithm \citep{celeux1992cem}, but the difference is that we use this procedure to update the graphical structure (the entries of the measurement graph), instead of updating the subject-specific latent variables as in classification EM.
We present this general EM algorithm dealing with unknown $\G$ in Algorithm \ref{algo-em2}.

\begin{algorithm}[h!]
\caption{Approximate EM algorithm for the BLESS Model when $\G$ is Unknown}\label{algo-em2}

\SetKwInOut{Input}{input}
\SetKwInOut{Output}{Output}

\KwData{Observed data array $\mathbf Y=(y_{ijc})_{N\times p \times d}\in\{0,1\}^{N\times p\times d}$ and number of latent variables $K$.}

 \While{not converged}{
 
\tcp{E Step}
Calculate the conditional expectation of each $z_{i,\aaa}$:
$$
\mathbb E[z_{i,\aaa}] = \mathbb P(\bo a_i=\aaa\mid -) \leftarrow
\frac{ \nu_{\aaa}\prod_{j=1}^p \prod_{c=1}^{d} (\theta^{(j)}_{c\mid \aaa})^{y_{ijc}} }{ \sum_{\aaa' \in \{0,1\}^K} \nu_{\aaa'}\prod_{j=1}^p \prod_{c=1}^{d} (\theta^{(j)}_{c\mid \aaa'})^{y_{ijc}} },\quad i\in[N],~ \aaa\in\{0,1\}^K.
$$

Draw each $\bo a_i$ from the above Categorical distribution with $2^K$ components.

For each $j\in[p]$ and $k\in[K]$, let
\begin{align*}
\gamma_{j,k} &\leftarrow 
\frac{\prod_{\aaa} \prod_{c=1}^d 
[(\theta_{c\mid 1}^{(j)})^{ \alpha_k }  (\theta_{c\mid 0}^{(j)})^{ 1- \alpha_k }]^{\sum_{i=1}^N y_{ijc} z_{i,\aaa}}}
{\sum_{k'=1}^K \prod_{\aaa}\prod_{c=1}^d [(\theta_{c\mid 1}^{(j)})^{ \alpha_{k'} }  (\theta_{c\mid 0}^{(j)})^{ 1- \alpha_{k'} }]^{\sum_{i=1}^N y_{ijc} z_{i,\aaa}}},
\\
g_{j,k} &  \leftarrow 1 ~\text{ if } ~\gamma_{j,k}=\max \{\gamma_{j,1},\ldots,\gamma_{j,K}\}; ~~ g_{j,k}\leftarrow 0~~ \text{otherwise}.
\end{align*}

\tcp{M Step}
Update continuous parameters $\btheta$ and $\nnu$:
    \begin{align*}
    \theta_{c\mid 1}^{(j)} &\leftarrow \frac{\sum_{\aaa}\sum_{i=1}^N \mathbb E[z_{i,\aaa}]\sum_{k=1}^K \alpha_k g_{j,k} y_{ijc}}{\sum_{\aaa}\sum_{i=1}^N \mathbb E[z_{i,\aaa}] \sum_{k=1}^K \alpha_k g_{j,k}},\quad j\in[p],~ c\in[d];\\
    \theta_{c\mid 0}^{(j)} &\leftarrow \frac{\sum_{\aaa}\sum_{i=1}^N \mathbb E[z_{i,\aaa}](1-\sum_{k=1}^K \alpha_k g_{j,k}) y_{ijc}}{\sum_{\aaa}\sum_{i=1}^N \mathbb E[z_{i,\aaa}] (1-\sum_{k=1}^K \alpha_k g_{j,k})},\quad  j\in[p],~ c\in[d];\\
    \nu_{\aaa} &\leftarrow \frac{\sum_{i=1}^N \mathbb E[z_{i,\aaa}]}{\sum_{\aaa'\in\{0,1\}^K} \sum_{i=1}^N \mathbb E[z_{i,\aaa'}]}, \quad \aaa\in\{0,1\}^K.
    \end{align*}
    Update $\theta_{c\mid \aaa}^{(j)} = \mathbbm{1}(\aaa\succeq \bo g_j)\theta_{c\mid 1}^{(j)} + (1-\mathbbm{1}(\aaa\succeq \bo g_j)) \theta_{c\mid 0}^{(j)}$ after completing the M Step.
    
}

\Output{Measurement graph $\G$ and parameters $\btheta$, $\nnu$.}
\end{algorithm}


\section{Real-world example about a prevention science survey}\label{sec-addreal} 
An influential paper in prevention science \cite{lanza2013} used the latent class model (LCM; with a unidimensional latent variable) to analyse the treatment effects on different latent subgroups, and illustrated the method using a dataset extracted
from the National Longitudinal Survey of Adolescent Health (NLSAH). 
Observed data for each subject are $p=6$ dichotomized characteristics: household poverty; single-parent status;
peer cigarette use; peer alcohol use; neighborhood unemployment; and neighborhood poverty. 
These observables actually measure three risks, with the first two measuring ($\alpha_1$) \emph{household risk}, the middle two measuring 
($\alpha_2$) \emph{peer risk}, and the last two measuring 
($\alpha_3$) \emph{neighborhood risk}.
According to the estimated conditional probability tables of the observed variables given the five latent classes, \cite{lanza2013} interpreted the latent classes as (a) Overall low risk, (b) Peer risk, (c) Household \& neighborhood (economic) risk, (d) Household \& peer risk, and (e) Overall high (multicontext) risk. 
Interestingly, we note that the analysis in \cite{lanza2013} lends itself to a reformulation using the BLESS model, and we argue that such a reformulation provides an interpretable graphical modeling alternative to plain latent class analysis.
Specifically, if viewing the three underlying risks as three latent variables, then the latent-to-observed measurement graph indeed takes a star-forest shape; see Table \ref{tab-prev1} for details of the $\G$ matrix.
More importantly, the aforementioned five latent classes can be nicely formulated as five different binary configurations of the three latent risks, as $(0,0,0)$, $(1,0,0)$, $(1,0,1)$, $(1,1,0)$, and $(1,1,1)$, respectively.
Here $\alpha_k=1$ indicates the higher risk group while $\alpha_k=0$ indicates the lower risk group.
See Table \ref{tab-prev2} for the multidimensional binary configurations of latent classes.

\begin{table}[h!]
    \caption{Prevention science survey example reformulated using the BLESS model. Latent-to-observed measurement graph structure $\G_{6\times 3}$.}
    \label{tab-prev1}
    
    \centering
    \resizebox{\textwidth}{!}{
    \begin{tabular}{llccc}
    \toprule
     & \multirow{3}{*}{Item Content} & & Fine-grained Latent Risks & \\
     \cmidrule(lr){3-5}
     & & $\alpha_1$ & $\alpha_2$ & $\alpha_3$ \\
     & & Household risk & Peer risk & Neighborhood risk \\
     \midrule
       1 & Household poverty & 1 & 0 & 0 \\
       2 & Single-parent status & 1 & 0 & 0\\
       3 & Peer cigarette use & 0 & 1 & 0 \\
       4 & Peer alcohol use & 0 & 1 & 0 \\
       5 & Neighborhood unemployment & 0 & 0 & 1 \\
       6 & Neighborhood poverty & 0 & 0 & 1 \\
       \bottomrule
    \end{tabular}
    }
\end{table}

\begin{table}[h!]
    \caption{Prevention science survey example reformulated using the BLESS model. Five latent classes obtained and explained in \cite{lanza2013}, and reformulated in the interpretable multidimensional-binary latent variable format.}
    \label{tab-prev2}
    
    \centering

\resizebox{\textwidth}{!}{
    \begin{tabular}{llccc}
    \toprule
     & \multirow{3}{*}{Latent Class Explanation} & & Fine-grained Latent Risks & \\
     \cmidrule(lr){3-5}
     & & $\alpha_1$ & $\alpha_2$ & $\alpha_3$ \\
     & & Household risk & Peer risk & Neighborhood risk \\
     \midrule
       1 & Overall low risk  & 0 & 0 & 0 \\
       2 & Peer risk & 1 & 0 & 0\\
       3 & Household \& neighborhood risk & 1 & 0 & 1 \\
       4 & Household \& peer risk & 1 & 1 & 0 \\
       5 & Overall high risk & 1 & 1 & 1 \\
    \bottomrule
    \end{tabular}
}
    
\end{table}

Because $\G$ shows that each latent risk has exactly two observed children characteristics, this example analysed in \cite{lanza2013} can be exactly regarded as satisfying the minimal conditions for generic identifiability of the BLESS model.
As \cite{lanza2013} did not include the original dataset that they analyzed which is extracted and sampled from the NLSAH survey, we do not perform the test here but point out the testing procedure is just the same as what we conducted in Section \ref{sec-prac} in the main text for the TIMSS data.
Specifically, one could simply test the hypothesis of identifiability by testing the marginal independence of the three groups of binary characteristics falling under the household risk, peer risk, and neighborhood risk, respectively.
One plausible conjecture is these three risks are likely interdependent due to the interactions of an adolescent's household, peers, and neighborhood.
In such a case, the BLESS model would be identifiable when applied to the survey dataset, and one could use the BLESS model as a more fine-grained and interpretable graphical modeling alternative to plain latent class analysis.



\end{document}